\newtheorem{remark}{Remark}[section] %%%add
\newtheorem{example}{Example}[section] %%%add
\title{High order algorithms for the fractional substantial diffusion equation with truncated L\'{e}vy flights
\thanks{This work was supported by the National Natural Science Foundation of China under
Grant No. 11271173}}
\author{Minghua Chen\thanks{ School of Mathematics and Statistics, Gansu Key Laboratory of Applied Mathematics and Complex Systems, Lanzhou University, Lanzhou 730000, P.R. China (E-mail: chenmh09@163.com) }
        \and Weihua Deng\thanks{Corresponding author (dengwh@lzu.edu.cn). School of Mathematics and Statistics,
Gansu Key Laboratory of Applied Mathematics and Complex Systems, Lanzhou University, Lanzhou 730000, P.R. China   }}
\begin{document}

\maketitle

\begin{abstract}
The equation with the time fractional substantial derivative and space fractional derivative describes the distribution of the functionals of the  L\'{e}vy flights; and the equation is derived as the macroscopic limit of the continuous time random walk in unbounded domain and the L\'{e}vy flights have divergent second order moments. However, in more practical problems, the physical domain is bounded and the involved observables have finite moments. Then the modified equation can be derived by tempering the probability of large jump length of the L\'{e}vy flights and the corresponding tempered space fractional derivative is introduced. This paper focuses on providing the high order algorithms for the modified equation, i.e., the equation with the time fractional substantial derivative and space tempered  fractional derivative. More concretely, the contributions of this paper are as follows: 1. the detailed numerical stability analysis and error estimates of the schemes with  first order accuracy in time and second order in space are given in {\textsl{complex}} space, which is necessary since the inverse Fourier transform needs to be made for getting the distribution of the functionals after solving the equation; 2. we further propose the schemes with high order accuracy in both time and space, and the techniques of treating the issue of keeping the high order accuracy of the schemes for  {\textsl{nonhomogeneous}} boundary/initial conditions are introduced; 3. the multigrid methods are effectively used to solve the obtained algebraic equations which still have the Toeplitz structure; 4. we perform extensive numerical experiments, including verifying the high convergence orders, simulating the physical system which needs to numerically make the inverse Fourier transform to the numerical solutions of the equation.

\end{abstract}

\begin{keywords}
fractional substantial diffusion equation, truncated L\'{e}vy flights, high order algorithm, multigrid method, inverse discrete Fourier transform,
numerical stability and convergence
\end{keywords}

\begin{AMS}
26A33, 65M06, 65M12, 65M55, 65T50
\end{AMS}

\pagestyle{myheadings}
\thispagestyle{plain}
\markboth{M. H. CHEN AND W. H. DENG}{ALGORITHMS FOR FRACTIONAL SUBSTANTIAL EQUATION}

\section{Introduction}
Nowadays, more and more anomalous diffusion phenomena are found in nature, from the motion of mRNA molecules in live cells to the flight of albatrosses. Anomalous diffusion is a stochastic process with nonlinear relationship to time, in contrast to the normal diffusion. The continuous time random walk (CTRW) with the power law waiting time and/or jump length can microscopically describes anomalous diffusion; its macroscopic limit in unbounded domain leads to the time and/or space fractional diffusion equation \cite{Barkai:00}. In fact, based on the fractional Fourier law and conservation law, the fractional diffusion equation can also be easily derived. The space fractional diffusion equation defined in unbounded domain characterizes the probability distribution of the positions of the particles of the L\'{e}vy flights, which has the divergent second order moment. The more practical transport problems often take place in bounded domain and the involved observables have finite second moments; exponentially tempering the probability of large jumps of the L\'{e}vy flights, i.e.,  making the L\'{e}vy density decay as $e^{-\lambda |x|}|x|^{-1-\alpha}$ with $\lambda>0$ \cite{del-Castillo-Negrete:09,Koponen:94} leads to the space tempered fractional diffusion equation \cite{Baeumera:10,Cartea:07,del-Castillo-Negrete:09,Hanert:14,Sabzikar:14}; in other words, the space tempered fractional diffusion equation describes the probability density function (PDF) of the truncated L\'{e}vy flights.

The functionals of the trajectories of the particles often attract practical interest, e.g., the time spent by a particle in a given domain, the macroscopic measured signal in the nuclear magnetic resonance experiments; and the functionals are usually defined as  $ A=\int_0^t U[x(\tau)]d\tau$. When $x(t)$ is the trajectory of a Brownian particle, Kac derived a Schr\"{o}dinger-like
equation for the distribution of the functionals of diffusive motion; with the deep understanding of the anomalous diffusion, the distribution of the functionals of the paths of anomalous diffusion naturally attracts the interests of scientists, and the corresponding fractional Feynman-Kac equation is derived \cite{Carmi:10,Carmi:11,Turgeman:09}, which involves the fractional substantial derivative \cite{Friedrich:06}. As mentioned in the first paragraph, sometimes because of the bounded physical domain and the finite second moments of the observables, we need to temper the probability of the large jump length of the L\'{e}vy flights and the tempered space fractional diffusion can be derived; if further choosing the waiting time distribution as power law $t^{-(1+\gamma)}$ with $0<\gamma<1$, then the diffusion equation with time fractional derivative and space tempered fractional derivative appears. How about the distribution of the functionals of the paths of the particles with the jump length distribution $e^{-\lambda |x|}|x|^{-1-\alpha}$ and waiting time distribution $t^{-(1+\gamma)}$? Following the idea of \cite{Carmi:10}, we can easily derive the equation (\ref{1.1}) with the time fractional substantial derivative and space tempered fractional derivative to describe it. This paper focuses on providing effective and high accurate numerical algorithms for the new equation with homogeneous or nonhomogeneous boundary conditions, being given as
\begin{equation} \label{1.1}
\left\{ \begin{array}
 {l@{\quad} l}
\displaystyle {_c^sD}_t^\gamma G(x,\rho,t)= K {\nabla}_x^{\alpha,\lambda} G(x,\rho,t);\\
G(x,\rho,0)=G_0(x,\rho),~~~~x\in  (a,b);\\
G(a,\rho,t)=B(a,\rho,t),~~~~G(b,\rho,t)=B(b,\rho,t), ~~~~0<t \leq T.\\
\end{array}
 \right.
\end{equation}
Here, $\rho$ is the Fourier pair of $A$ and $G(x,A,t)$ is the joint PDF of finding the particle at time $t$ with the functional value $A$ and the initial position of the particle at $x$. The diffusion coefficient $K$ is a positive constant.  The Caputo fractional substantial derivative ${_c^sD}_t^\gamma$ with $\gamma \in (0,1)$ and
the Riesz tempered fractional derivative  ${\nabla}_x^{\alpha,\lambda}$  with $\lambda>0$, $\alpha \in (1,2)$ are, respectively, defined in the following:
\begin{definition}(\cite{Chen:13})\label{definition1.2}
Let $ \gamma>0$, $\rho$ be a real number, and $G(\cdot,t)$ be (m-1)-times continuously differentiable on $(0,\infty)$ and its $m$-times derivative be integrable on any finite subinterval of  $[0,\infty)$, where $m$ is the smallest integer that exceeds $\gamma$. Then
the Caputo fractional substantial derivative of $G(\cdot,t)$ of order $\gamma$ is defined by
\begin{equation*}
 {_c^sD}_t^\gamma G(x,\rho,t)={^s\!}I_t^{m-\gamma}[{^s\!}D_t^m G(x,\rho,t)],
\end{equation*}
where ${^s\!} D_t^m=\left(\frac{\partial}{\partial t}-J\rho  U(x)\right)^m$, $J=\sqrt{-1}$, and $U(x)$ is a prescribed  function. And the fractional substantial integral ${^s\!}I_t^\beta $ with $\beta>0$ is defined as
\begin{equation*}
{^s\!}I_t^\beta G(x,\rho,t)=\frac{1}{\Gamma(\beta)}\int_{0}^t{\left(t-\tau\right)^{\beta-1}}e^{J\rho U(x)(t-\tau)}{G(x,\rho,\tau)}d\tau, ~~~~t>0.
\end{equation*}
In fact, the Riemann-Liouville fractional substantial derivative is defined by
\begin{equation*}
{^s\!}D_t^\gamma G(x,\rho,t)={^s\!}D_t^m[{^s\!}I_t^{m-\gamma} G(x,\rho,t)].
\end{equation*}
\end{definition}

The Riesz tempered (truncated)  fractional derivative   is defined by  \cite{Cartea:07}
\begin{equation}\label{1.2}
\nabla_x^{\alpha,\lambda} G(x,\rho,t) =-\kappa_{\alpha}\left[ _{a}D_x^{\alpha,\lambda}+ _{x}\!D_{b}^{\alpha,\lambda} \right]G(x,\rho,t),
\end{equation}
where $\lambda>0, \,\kappa_{\alpha}=\frac{1}{2\cos(\alpha \pi/2)}$, $\alpha \in (1,2)$. The left and right Riemann-Liouville tempered fractional
derivatives are used,  being, respectively, defined by \cite{Baeumera:10,Cartea:07}
\begin{equation}\label{1.3}
\begin{split}
 _{a}D_x^{\alpha,\lambda}G(x,\rho,t)
&=e^{-\lambda x}{ _{a}}D_x^{\alpha}[e^{\lambda x}G(x,\rho,t)]-\lambda^\alpha G(x,\rho,t)
 -\alpha \lambda^{\alpha-1} \frac{\partial G(x,\rho,t) }{\partial x}\\
%&={_a^1}D_x^{\alpha,\lambda}G(x,\rho,t)-\lambda^\alpha G(x,\rho,t)
% -\alpha \lambda^{\alpha-1} \frac{\partial G(x,\rho,t) }{\partial x}\\
&={_a}\nabla_x^{\alpha,\lambda} G(x,\rho,t)-\alpha \lambda^{\alpha-1} \frac{\partial G(x,\rho,t) }{\partial x},
\end{split}
\end{equation}
and
\begin{equation}\label{1.4}
\begin{split}
 _{x}D_{b}^{\alpha,\lambda}G(x,\rho,t)
&=e^{\lambda x}{_{x}}D_{b}^{\alpha}[e^{-\lambda x}G(x,\rho,t)]-\lambda^\alpha G(x,\rho,t)
 +\alpha \lambda^{\alpha-1} \frac{\partial G(x,\rho,t) }{\partial x}\\
%&= {_x^1}D_{b}^{\alpha,\lambda}G(x,\rho,t)-\lambda^\alpha G(x,\rho,t)
% +\alpha \lambda^{\alpha-1} \frac{\partial G(x,\rho,t) }{\partial x}\\
&={_x}\nabla_b^{\alpha,\lambda} G(x,\rho,t)+\alpha \lambda^{\alpha-1} \frac{\partial G(x,\rho,t) }{\partial x},
\end{split}
\end{equation}
where  ${ _{a}}D_x^{\alpha}$ and ${_{x}}D_{b}^{\alpha}$ are, respectively, the left and right Riemann-Liouville fractional
derivatives \cite{Podlubny:99}; and
\begin{equation}\label{1.5}
\begin{split}
{_a}\nabla_x^{\alpha,\lambda} G(x,\rho,t)
&=e^{-\lambda x}{ _{a}}D_x^{\alpha}[e^{\lambda x}G(x,\rho,t)]-\lambda^\alpha G(x,\rho,t);\\
{_x}\nabla_b^{\alpha,\lambda} G(x,\rho,t)
&=e^{\lambda x}{_{x}}D_{b}^{\alpha}[e^{-\lambda x}G(x,\rho,t)]-\lambda^\alpha G(x,\rho,t);
\end{split}
\end{equation}
\begin{equation}\label{1.6}
\begin{split}
{_a^1}D_x^{\alpha,\lambda}G(x,\rho,t)
&=e^{-\lambda x}{ _{a}}D_x^{\alpha}[e^{\lambda x}G(x,\rho,t)];\\
{_x^1}D_{b}^{\alpha,\lambda}G(x,\rho,t)
&=e^{\lambda x}{_{x}}D_{b}^{\alpha}[e^{-\lambda x}G(x,\rho,t)].
\end{split}
\end{equation}
 Then (\ref{1.2}) reduces to
\begin{equation}\label{1.7}
\nabla_x^{\alpha,\lambda} G(x,\rho,t) =-\kappa_{\alpha}\left[ _{a}\nabla_x^{\alpha,\lambda}+ {_x}\nabla_{b}^{\alpha,\lambda} \right]G(x,\rho,t).
\end{equation}

\begin{remark}\label{remark1.1}
For $\lambda=0$, there is no truncation and Eq. (\ref{1.1}) reduces to the backward fractional Feynman-Kac equation with L\'{e}vy flight   \cite{Carmi:10}.
For $\lambda=0$ and $\rho=0$, Eq. (\ref{1.1}) becomes the fractional Fokker-Planck equation \cite{Deng:08,Metzler:99} or  space-time Caputo-Riesz fractional diffusion equation \cite{Chen:12}.
\end{remark}

In recent years, some important progresses for the numerical algorithms of space and/or time fractional diffusion equations have been made, see, e.g.,  \cite{Li:09,Yang:11,Zeng:013,Zhang:12}. Based on the weighted and shifted Gr\"{u}nwald first order discretization \cite{Meerschaert:04}, Lubich second and third order discretizations \cite{Lubich:86}, a series of effective high order discretizations for space fractional derivatives are recently derived \cite{Chen:1313,Chen:0013,Tian:12}. The extensions of the high order discretizations to fractional substantial derivative and tempered space fractional derivative can be found at \cite{Deng:14} and \cite{Li:14}. It should be noted that the obtained schemes have high order accuracy only when the boundary conditions are homogeneous or even the solution's high order derivatives at the boundaries are zero since the schemes are derived by using the Fourier transforms. One of the contributions of this paper is to present the techniques to overcome this limitation. We provide the detailed numerical stability and convergence analyses in the complex space for the scheme of (\ref{1.1}) with first order discretization in time and second order in space; performing the numerical analysis in complex space is necessary since to find the distribution of the functionals we need to make the inverse Fourier transform for the solutions of (\ref{1.1}). Then the schemes with high order accuracy in both time and space are proposed for (\ref{1.1}) with nonhomogeneous boundary and/or initial conditions, where the techniques of treating the nonhomogeneous boundary/initial conditions are introduced. All the schemes still have the potential Toeplitz structure and the multigrid methods are effectively used to solve the obtained algebraic equations; the convergence orders of the schemes are numerically confirmed. The high order schemes of tempered fractional derivatives can keep the same computation cost with first order scheme but greatly improve the accuracy \cite{Chen:0013}, since the nonlocal properties of fractional operators.  By numerically making the inverse Fourier transform for a series of solutions of (\ref{1.1}) with different $\rho$, we simulate the physical system.

The organization of the rest of the paper is as follows. Section $2$ is composed of $4$ subsections: the first subsection provides the effective second order space discretization for the tempered fractional derivative; the second subsection presents the numerical schemes of (\ref{1.1}) with $\nu$-th  order accuracy in time and second order in space, where $\nu=1,2,3,4$; the detailed numerical stability and convergence analyses for the scheme with first order accuracy in time and second order in space are given in the third subsection; in the fourth subsection, the numerical experiments are performed by multigrid methods to verify the convergence orders. Section 3 focuses on proposing the high order schemes of (\ref{1.1}) with nonhomogeneous boundary and/or initial conditions; the convergence orders are confirmed by numerical experiments and the physical systems are simulated.  Finally,  we conclude the paper with some remarks in the last section.

\section{High order schemes for (\ref{1.1}) with homogeneous boundary conditions}  This section focuses on discussing the high order schemes of (\ref{1.1}) with homogeneous boundary conditions and it is divided into four subsections. In \S 2.1, we derive the effective second order discretizations in space; and the schemes with high accuracy in both time and space are given in \S 2.2. The detailed proofs of stability and convergence for the scheme with first order accuracy in time and second order in space are presented in \S 2.3; and the last subsection numerically confirms the convergence orders by multigrid methods. Let us first introduce the following lemmas.

\begin{lemma} \label{lemma2.1}
Let $a=-\infty$ and $b=\infty$ in  (\ref{1.6}), respectively. Then we have
\begin{equation*}
e^{-\lambda x}{_{-\infty}}D_{x}^{\alpha}[e^{\lambda x}G(x)]=\frac{1}{\Gamma(n-\alpha)}
\left (  \frac{d}{dx}+\lambda\right)^n \int_{-\infty}^x \frac{e^{-\lambda (x-\xi)}G(\xi)}{(x-\xi)^{\alpha-n+1}}d\xi;
\end{equation*}
and
\begin{equation*}
e^{\lambda x}{_x}D_{\infty}^{\alpha}[e^{-\lambda x}G(x)]= \frac{(-1)^n}{\Gamma(n-\alpha)}
\left (  \frac{d}{dx}-\lambda\right)^n \int_{x}^{\infty} \frac{e^{-\lambda (\xi-x)}G(\xi)}{(\xi-x)^{\alpha-n+1}}d\xi.
\end{equation*}
\end{lemma}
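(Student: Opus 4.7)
The plan is to unfold the standard Riemann–Liouville definition on the shifted function $e^{\lambda x}G(x)$ and then to use a simple operator conjugation identity to absorb the outer factor $e^{-\lambda x}$ into the $n$-th derivative as a shifted operator $\bigl(\tfrac{d}{dx}+\lambda\bigr)^n$. The analogous argument, with signs flipped, handles the right-sided formula.

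Concretely, for the first identity I would start from
\begin{equation*}
{_{-\infty}}D_x^{\alpha}[e^{\lambda x}G(x)]=\frac{1}{\Gamma(n-\alpha)}\frac{d^n}{dx^n}\int_{-\infty}^x \frac{e^{\lambda \xi}G(\xi)}{(x-\xi)^{\alpha-n+1}}\,d\xi,
\end{equation*}
which is just (\ref{1.6}) with $a=-\infty$ combined with the classical Riemann–Liouville definition; $n$ is the smallest integer exceeding $\alpha$. The crucial observation is the factorisation $e^{\lambda \xi}=e^{\lambda x}\,e^{-\lambda (x-\xi)}$, which lets me pull $e^{\lambda x}$ outside the integral and write
\begin{equation*}
\int_{-\infty}^x \frac{e^{\lambda \xi}G(\xi)}{(x-\xi)^{\alpha-n+1}}\,d\xi = e^{\lambda x}\, h(x), \qquad h(x):=\int_{-\infty}^x \frac{e^{-\lambda(x-\xi)}G(\xi)}{(x-\xi)^{\alpha-n+1}}\,d\xi.
\end{equation*}
Then after multiplying both sides by $e^{-\lambda x}$ the question reduces to recognising the operator identity $e^{-\lambda x}\,\tfrac{d^n}{dx^n}[e^{\lambda x} h(x)] = \bigl(\tfrac{d}{dx}+\lambda\bigr)^n h(x)$.

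I would prove this identity by a one-line induction: for $n=1$ one has $e^{-\lambda x}\tfrac{d}{dx}(e^{\lambda x}h)=h'+\lambda h=(\tfrac{d}{dx}+\lambda)h$, and the inductive step follows from writing $e^{-\lambda x}\tfrac{d^{n+1}}{dx^{n+1}}(e^{\lambda x}h) = e^{-\lambda x}\tfrac{d^n}{dx^n}(e^{\lambda x}\,e^{-\lambda x}\tfrac{d}{dx}(e^{\lambda x}h))$ and applying the hypothesis to the inner factor $(\tfrac{d}{dx}+\lambda)h$. Combining this with the factorisation above delivers the stated form of the first equation.

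For the second identity the template is identical: use $f=e^{-\lambda x}G$ in the classical right-sided Riemann–Liouville definition, which already carries a $(-1)^n$; factor $e^{-\lambda \xi}=e^{-\lambda x}\,e^{-\lambda(\xi-x)}$ to extract $e^{-\lambda x}$ from the integral; then invoke the mirror-conjugation identity $e^{\lambda x}\,\tfrac{d^n}{dx^n}[e^{-\lambda x} h(x)] = \bigl(\tfrac{d}{dx}-\lambda\bigr)^n h(x)$, again established by induction starting from the $n=1$ case $e^{\lambda x}\tfrac{d}{dx}(e^{-\lambda x}h)=h'-\lambda h$. The main (and only real) obstacle is a bookkeeping one: making sure the sign on $\lambda$ and on the $(-1)^n$ tracks correctly between the two sides, and confirming that the differentiation under the integral sign is legal under the smoothness/integrability hypotheses implicit in the use of (\ref{1.6}). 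Neither presents any genuine difficulty.
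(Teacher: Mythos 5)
Your proposal is correct and follows essentially the same route as the paper: factor the exponential inside the Riemann--Liouville integral as $e^{\mp\lambda \xi}=e^{\mp\lambda x}e^{-\lambda|\xi-x|}$ and then absorb the outer exponential into the $n$-th derivative via the conjugation identity $e^{\mp\lambda x}\frac{d^n}{dx^n}[e^{\pm\lambda x}h]=\bigl(\frac{d}{dx}\pm\lambda\bigr)^n h$, which the paper also invokes (stating it directly rather than by your explicit induction). The only cosmetic difference is that the paper cites a reference for the first identity and writes out only the second, whereas you prove both.
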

\begin{proof}
The first equation of this lemma can be seen in Remark 2 of \cite{Li:14}. Here, we mainly prove the second one.
From the equality
$$\frac{d^n}{dx^n} \left[e^{-\lambda x} G(x) \right]=e^{-\lambda x} \left(\frac{d}{dx}-\lambda\right)^n   G(x),
$$
we have
\begin{equation*}
\begin{split}
e^{\lambda x}{_x}D_{\infty}^{\alpha}[e^{-\lambda x}G(x)]
&=e^{\lambda x}  (-1)^n\frac{d^n}{dx^n}\left\{{_x}D_{\infty}^{\alpha-n}[e^{-\lambda x}G(x)]\right \}\\
&=e^{\lambda x}  (-1)^n\frac{d^n}{dx^n}\left[e^{-\lambda x}\frac{1}{\Gamma(n-\alpha)}
 \int_{x}^{\infty} \frac{e^{-\lambda (\xi-x)}G(\xi)}{(\xi-x)^{\alpha-n+1}}d\xi\right ]\\
%&=e^{\lambda x}  (-1)^n  e^{-\lambda x} \left(\frac{d}{dx}-\lambda\right)^n  \frac{1}{\Gamma(n-\alpha)}
% \int_{x}^{\infty} \frac{e^{-\lambda (\xi-x)}G(\xi)}{(\xi-x)^{\alpha-n+1}}d\xi\\
&=\frac{(-1)^n}{\Gamma(n-\alpha)}
\left (  \frac{d}{dx}-\lambda\right)^n \int_{x}^{\infty} \frac{e^{-\lambda (\xi-x)}G(\xi)}{(\xi-x)^{\alpha-n+1}}d\xi.
\end{split}
\end{equation*}
The proof is completed.
\end{proof}
\begin{remark}\label{remark2.1}(\cite{Chen:13,Li:14}) Based on Lemma \ref{lemma2.1}, it can be noted that the fractional substantial calculus and
the first term of the tempered (truncated) fractional operators (corresponding to   (\ref{1.6}) ) have almost the same mathematical formulations.
\end{remark}

\begin{lemma} (\cite{Cartea:07,Chen:13,Li:14})\label{lemma2.2}
 Let $\alpha>0$, $G \in C_0^{\infty}(\Omega)$, $\Omega \subset \mathbb{R}$. Then
\begin{equation*}
\mathcal{F}\left( e^{-\lambda x}{ _{-\infty}}D_x^{\alpha}[e^{\lambda x}G(x)]\right)
 =(\lambda-i\omega)^{\alpha}\widehat{G}(\omega);
\end{equation*}
and
\begin{equation*}
\mathcal{F}\left( e^{\lambda x}{ _x}D_{\infty}^{\alpha}[e^{-\lambda x}G(x)]\right)
  =(\lambda+i\omega)^{\alpha}\widehat{G}(\omega),
\end{equation*}
where $\mathcal{F}$ denotes Fourier transform operator and $\widehat{G}(\omega)=\mathcal{F}(G)$, i.e.,
\begin{equation*}
    \widehat{G}(\omega)=\int_{\mathbb{R}}e^{i\omega x }G(x)dx.
 \end{equation*}
\end{lemma}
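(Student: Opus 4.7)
The plan is to reduce both identities to a standard convolution-theorem / Fourier-multiplier argument, using Lemma~\ref{lemma2.1} to expose the convolution structure hidden inside the tempered operator. For the left-sided identity, I would first apply Lemma~\ref{lemma2.1} with $a=-\infty$ to write
$$e^{-\lambda x}\,{_{-\infty}}D_x^{\alpha}[e^{\lambda x}G(x)] = \frac{1}{\Gamma(n-\alpha)}\left(\frac{d}{dx}+\lambda\right)^n (K_+ \ast G)(x),$$
where $K_+(y) = e^{-\lambda y} y^{n-\alpha-1} \mathbf{1}_{\{y>0\}}$. Because $\lambda>0$, a direct gamma-integral computation gives
$$\widehat{K_+}(\omega) = \int_0^\infty e^{(i\omega-\lambda)y} y^{n-\alpha-1}\,dy = \Gamma(n-\alpha)(\lambda-i\omega)^{-(n-\alpha)},$$
the power being defined by the principal branch since $\lambda-i\omega$ lies in the right half-plane. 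With $G\in C_0^\infty$ the convolution theorem applies, so $\widehat{K_+\ast G} = \widehat{K_+}\,\widehat{G}$. The Fourier symbol of $\tfrac{d}{dx}+\lambda$ is $\lambda-i\omega$ (using $\widehat{G'}(\omega)=-i\omega\widehat{G}(\omega)$), so raising to the $n$th power and folding in the $1/\Gamma(n-\alpha)$ prefactor collapses the exponents to $(\lambda-i\omega)^{\alpha}\widehat{G}(\omega)$, as claimed.

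For the right-sided identity, I would follow the symmetric path. Lemma~\ref{lemma2.1} gives
$$e^{\lambda x}\,{_{x}}D_{\infty}^{\alpha}[e^{-\lambda x}G(x)] = \frac{(-1)^n}{\Gamma(n-\alpha)}\left(\frac{d}{dx}-\lambda\right)^n (K_-\ast G)(x),$$
with $K_-(y)=e^{\lambda y}(-y)^{n-\alpha-1}\mathbf{1}_{\{y<0\}}$, obtained from the change of variable $z=\xi-x$ inside the integral. A parallel gamma-integral computation yields $\widehat{K_-}(\omega)=\Gamma(n-\alpha)(\lambda+i\omega)^{-(n-\alpha)}$, while $\tfrac{d}{dx}-\lambda$ has Fourier symbol $-i\omega-\lambda=-(\lambda+i\omega)$ and contributes a factor $(-1)^n(\lambda+i\omega)^n$. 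The two $(-1)^n$ factors cancel and one again obtains $(\lambda+i\omega)^{\alpha}\widehat{G}(\omega)$.

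The main obstacle is not conceptual but bookkeeping: carefully tracking the sign conventions and the $(-1)^n$ factors in the right-sided case, and pinning down the correct branch of $(\lambda\pm i\omega)^{\alpha}$. The hypothesis $\lambda>0$ is used twice and is essential: it ensures absolute convergence of the defining integral for $\widehat{K_\pm}$, and it keeps $\lambda\pm i\omega$ away from the branch cut so that the identity $(\lambda\mp i\omega)^n(\lambda\mp i\omega)^{-(n-\alpha)} = (\lambda\mp i\omega)^{\alpha}$ is unambiguous. The compact-support smoothness $G\in C_0^\infty$ trivially justifies interchanging the Fourier transform with the differential operator $(\tfrac{d}{dx}\pm\lambda)^n$, so no further regularity arguments are needed.
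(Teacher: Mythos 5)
The paper states Lemma~\ref{lemma2.2} as a cited result (from \cite{Cartea:07,Chen:13,Li:14}) and gives no proof of its own, so there is nothing internal to compare against; your argument is the standard one used in those references. It is correct: with the paper's convention $\widehat{G}(\omega)=\int e^{i\omega x}G(x)\,dx$ the symbol of $\tfrac{d}{dx}$ is indeed $-i\omega$, the kernels $K_\pm$ are in $L^1$ for $\lambda>0$ and non-integer $\alpha$ (so $n-\alpha>0$, as holds here with $\alpha\in(1,2)$, $n=2$), the gamma-integral evaluations of $\widehat{K_\pm}$ are right, and the $(-1)^n$ factors in the right-sided case cancel exactly as you describe.
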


\subsection{Derivation of the effective second order discretizations in space}
From Lemma \ref{lemma2.1}, Remark \ref{remark2.1} and \cite{Chen:13}, it follows that  the $L$-th order ($L\leq 5$)  approximations of the $\alpha$-th left Riemann-Liouville tempered fractional derivative can be generated by the corresponding coefficients of the generating functions $\kappa ^{L,\alpha}(\zeta)$,
\begin{equation*}
\kappa^{L,\alpha}(\zeta) = \left(\sum_{j=1}^L\frac{1}{j}\left(1- e^{-\lambda h} \zeta  \right)^j\right)^{\alpha},
\end{equation*}
where $h=(b-a)/M$ is the uniform stepsize and
$x_i=a+ih$, $i=0,1,\ldots,M-1,M$.

Taking $L=1$,  the above equation can be recast for all $|\zeta| \leq 1$ as
\begin{equation}\label{2.1}
\begin{split}
&\left(1-e^{-\lambda h}\zeta\right)^{\alpha}
= \sum_{j=0}^{\infty}e^{-j\lambda h}g_j^{\alpha}\zeta^{j}
\end{split}
\end{equation}
with
\begin{equation}\label{2.2}
\begin{split}
& g_0^{\alpha}=1, ~~~~g_j^{\alpha}=\left(1-\frac{\alpha+1}{j}\right)g_{j-1}^{\alpha},~~j \geq 1.
 \end{split}
\end{equation}

\begin{lemma}[\cite{Baeumera:10}]\label{lemma2.3}
Let $G $, $_{-\infty}\nabla_x^{\alpha+1,\lambda}G(x)$  and their Fourier transforms belong to $L_1(\mathbb{R})$; and $p \in \mathbb{R}$, $\lambda \geq 0$, $h>0$, $\alpha \in (1,2)$.  Define
 \begin{equation}\label{2.3}
A_{p}^{\alpha,\lambda}G(x)=\frac{1}{h^{\alpha}}\sum_{j=0}^{\infty}e^{-(j-p)\lambda h}g_j^{\alpha}G(x-(j-p)h)
                            -e^{ph\lambda}\frac{(1-e^{-h\lambda})^\alpha}{h^\alpha}G(x),
\end{equation}
where $g_j^{\alpha}$ is defined by (\ref{2.2}). Then
 $$_{-\infty}\nabla_x^{\alpha,\lambda}G(x)=A_{p}^{\alpha,\lambda}G(x)+\mathcal{O}(h).$$
\end{lemma}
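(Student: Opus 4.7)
My plan is to verify the consistency of the scheme via Fourier transforms, exploiting the generating function identity (\ref{2.1}) together with Lemma \ref{lemma2.2}. First I would apply $\mathcal{F}$ to both sides of (\ref{2.3}). Using the translation property $\mathcal{F}(G(\cdot-(j-p)h))(\omega)=e^{i\omega(j-p)h}\widehat{G}(\omega)$, the convolution-like sum becomes
\begin{equation*}
\mathcal{F}\!\left(\tfrac{1}{h^\alpha}\sum_{j=0}^{\infty}e^{-(j-p)\lambda h}g_j^{\alpha}G(x-(j-p)h)\right)(\omega)
=\frac{e^{p(\lambda-i\omega)h}}{h^\alpha}\sum_{j=0}^{\infty}g_j^{\alpha}e^{-j(\lambda-i\omega)h}\widehat{G}(\omega).
\end{equation*}
Setting $\zeta=e^{i\omega h}$ in (\ref{2.1}) identifies the inner series with $(1-e^{-(\lambda-i\omega)h})^{\alpha}$, so after also transforming the subtracted term in (\ref{2.3}) we obtain
\begin{equation*}
\widehat{A_{p}^{\alpha,\lambda}G}(\omega)=\frac{1}{h^\alpha}\Big[e^{p(\lambda-i\omega)h}(1-e^{-(\lambda-i\omega)h})^{\alpha}-e^{p\lambda h}(1-e^{-\lambda h})^{\alpha}\Big]\widehat{G}(\omega).
\end{equation*}

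Next I would Taylor expand both bracketed terms. The elementary expansion $(1-e^{-zh})^{\alpha}/h^\alpha=z^{\alpha}\bigl(1-\tfrac{zh}{2}+\mathcal{O}(h^2)\bigr)^{\alpha}=z^{\alpha}+\mathcal{O}(h)$ applied with $z=\lambda-i\omega$ and $z=\lambda$, combined with $e^{p(\lambda-i\omega)h}=1+\mathcal{O}(h)$ and $e^{p\lambda h}=1+\mathcal{O}(h)$, yields
\begin{equation*}
\widehat{A_{p}^{\alpha,\lambda}G}(\omega)=\bigl[(\lambda-i\omega)^{\alpha}-\lambda^{\alpha}\bigr]\widehat{G}(\omega)+h\,\Phi(\omega,h)\widehat{G}(\omega),
\end{equation*}
where $\Phi(\omega,h)$ collects the remainder. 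On the other hand, from (\ref{1.5}) and Lemma \ref{lemma2.2},
\begin{equation*}
\widehat{{_{-\infty}}\nabla_x^{\alpha,\lambda}G}(\omega)=\bigl[(\lambda-i\omega)^{\alpha}-\lambda^{\alpha}\bigr]\widehat{G}(\omega).
\end{equation*}
Subtracting and taking the inverse Fourier transform gives the pointwise error estimate $\mathcal{O}(h)$, provided one can bound $\Phi(\omega,h)\widehat{G}(\omega)$ in $L_1(\mathbb{R})$ uniformly in $h$.

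The main obstacle is precisely this last uniform bound. One needs to show that the remainder factor $\Phi(\omega,h)$ grows at most like $|(\lambda-i\omega)^{\alpha+1}|$ as $|\omega|\to\infty$, so that the hypothesis $_{-\infty}\nabla_x^{\alpha+1,\lambda}G\in L_1$ (which, through Lemma \ref{lemma2.2}, controls $(\lambda-i\omega)^{\alpha+1}\widehat{G}(\omega)\in L_1$ modulo the lower-order $\lambda^{\alpha+1}\widehat{G}$ piece) absorbs the remainder. Concretely, I would write $e^{-(\lambda-i\omega)h}=1-(\lambda-i\omega)h+\tfrac{1}{2}(\lambda-i\omega)^2h^2+\cdots$, expand $(1-e^{-(\lambda-i\omega)h})^{\alpha}$ by the binomial formula, and extract the $(\lambda-i\omega)^{\alpha+1}h$ contribution explicitly, leaving a higher-order remainder that is dominated by the same factor. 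The symmetric treatment of the $\lambda^{\alpha}$ term is easier since $\omega$ is absent. Once the integrand of the inverse Fourier integral is uniformly bounded by an $L_1$ majorant times $h$, the dominated convergence (or a direct $L^\infty$ bound on the inverse transform) delivers the $\mathcal{O}(h)$ estimate and the lemma is proved.
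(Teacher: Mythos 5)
Your proposal follows essentially the same route as the paper's own proof: Fourier-transform the operator, identify its symbol as $\frac{1}{h^\alpha}\bigl[e^{p(\lambda-i\omega)h}(1-e^{-(\lambda-i\omega)h})^{\alpha}-e^{p\lambda h}(1-e^{-\lambda h})^{\alpha}\bigr]$ via the generating function (\ref{2.1}), expand to first order in $h$, and control the remainder by the $L_1$ hypothesis on $\mathcal{F}(_{-\infty}\nabla_x^{\alpha+1,\lambda}G)$, which dominates $(\lambda-i\omega)^{\alpha+1}\widehat{G}(\omega)$. The uniform majorization of the remainder that you flag as the main obstacle is precisely the step the paper handles (somewhat tersely) with its bound $|\widehat{\phi}(\omega)|\le\widetilde{c}\,(|(\lambda-i\omega)^{\alpha+1}\widehat{G}(\omega)|+|\lambda^{\alpha+1}\widehat{G}(\omega)|)h$, so your argument is correct and complete in the same sense.
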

\begin{proof}
For bringing convenience to the analysis of the following high order schemes, we reprove this lemma; the way of proof is almost the same as the one of \cite{Baeumera:10}.
\begin{equation*}
\begin{split}
\mathcal{F}(A_{p}^{\alpha,\lambda}G)(\omega)
&=\frac{1}{h^{\alpha}}\sum_{j=0}^{\infty}e^{-(j-p)\lambda h}g_j^{\alpha}e^{i\omega(j-p) h}\widehat{G}(\omega)
                            -e^{ph\lambda}\frac{(1-e^{-h\lambda})^\alpha}{h^\alpha}\widehat{G}(\omega)\\
&=(\lambda-i\omega)^{\alpha}\omega_{p,h}(\lambda-i\omega)\widehat{G}(\omega)-\lambda^\alpha \omega_{p,h}(\lambda)\widehat{G}(\omega)
\end{split}
\end{equation*}
with
\begin{equation}\label{2.4}
\omega_{p,h}(z)=e^{ phz} \left(\frac{1-e^{-hz}}{hz}\right )^{\alpha}=1+\left(p-\frac{\alpha}{2}\right)zh+ \mathcal{O}(h^2).
\end{equation}
Therefore, from Lemma \ref{lemma2.2},  there exists
\begin{equation*}
\begin{split}
\mathcal{F}(A_p^{\alpha,\lambda}G)(\omega)=\mathcal{F}(_{-\infty}\nabla_x^{\alpha,\lambda}G)(\omega)+ \widehat{\phi}(\omega),
  \end{split}
\end{equation*}
where $ \widehat{\phi}(\omega)
=\left(p-\frac{\alpha}{2}\right)\left[(\lambda -i\omega)^{\alpha+1}-\lambda^{\alpha+1}\right]\widehat{f}(\omega)\cdot h+\mathcal{O}(h^2)$.
 Then
\begin{equation*}
\begin{split}
&|\widehat{\phi}(\omega)| \leq \widetilde{c}\cdot\left(|(\lambda -i\omega)^{\alpha+1}\widehat{f}(\omega)|+|\lambda^{\alpha+1}\widehat{f}(\omega)|\right)\cdot h.
\end{split}
\end{equation*}
With the condition  $\mathcal{F}[\mathcal{_{-\infty}}\nabla_x^{\alpha+1,\lambda}G(x)] \in L_1(\mathbb{R})$, it leads to
\begin{equation*}
\begin{split}
| \mathcal{_{-\infty}}\nabla_x^{\alpha,\lambda}G(x)-A_p^{\alpha,\lambda}G(x)|
=|\phi(x)| \leq \frac{1}{2\pi}\int_{\mathbb{R}}|\widehat{\phi}(\omega)|dx=\mathcal{O}(h).
  \end{split}
\end{equation*}

\end{proof}

\begin{lemma}\label{lemma2.4}
Let $G $, $_{-\infty}\nabla_x^{\alpha+2,\lambda}G(x)$ and their Fourier transforms belong to $L_1(\mathbb{R})$; and $p \in \mathbb{R}$, $\lambda \geq 0$, $h>0$, $\alpha \in (1,2)$.  Define
 \begin{equation}\label{2.5}
A_{r_1,r_2,r_3}^{\alpha,\lambda}G(x)=\left(r_1A_{1}^{\alpha,\lambda}+r_2A_{0}^{\alpha,\lambda}+r_3A_{-1}^{\alpha,\lambda}\right)G(x),
\end{equation}
where $A_{1}^{\alpha,\lambda}$, $A_{0}^{\alpha,\lambda}$ and $A_{-1}^{\alpha,\lambda}$ are defined by (\ref{2.3}). Then
 $$_{-\infty}\nabla_x^{\alpha,\lambda}G(x)=A_{r_1,r_2,r_3}^{\alpha,\lambda}G(x)+\mathcal{O}(h^2),$$
where
\begin{equation}\label{2.6}
r_1+r_2+r_3=1;~~ r_1=\frac{\alpha}{2}+r_3,~~r_2=\frac{2-\alpha}{2}-2r_3,~~ \forall ~r_3.
\end{equation}
\end{lemma}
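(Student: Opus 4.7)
The plan is to mimic the Fourier-symbol analysis already carried out for Lemma~\ref{lemma2.3}, but push the Taylor expansion of the symbol $\omega_{p,h}(z)$ one order further and then choose $r_1,r_2,r_3$ to annihilate the leading error term. Starting from the identity
\begin{equation*}
\mathcal{F}(A_{p}^{\alpha,\lambda}G)(\omega)=(\lambda-i\omega)^{\alpha}\omega_{p,h}(\lambda-i\omega)\widehat{G}(\omega)-\lambda^{\alpha}\omega_{p,h}(\lambda)\widehat{G}(\omega),
\end{equation*}
I would expand
\begin{equation*}
\omega_{p,h}(z)=e^{phz}\left(\frac{1-e^{-hz}}{hz}\right)^{\alpha}=1+\left(p-\tfrac{\alpha}{2}\right)zh+c_{2}(p,\alpha)\,z^{2}h^{2}+\mathcal{O}(h^{3}),
\end{equation*}
where $c_{2}(p,\alpha)$ is an explicit polynomial in $p$ and $\alpha$ obtained by multiplying the Taylor series of $e^{phz}$ and $\bigl((1-e^{-hz})/(hz)\bigr)^{\alpha}$. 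Only the value of the $h$-coefficient will be used, but I would record the formula for $c_2(p,\alpha)$ to make the remainder bound explicit.

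Next, I would form the symbol of $A_{r_1,r_2,r_3}^{\alpha,\lambda}$, which is simply the linear combination of the three symbols above with $p=1,0,-1$. Imposing $r_1+r_2+r_3=1$ cancels the constant term so that the leading part agrees with $\mathcal{F}(_{-\infty}\nabla_x^{\alpha,\lambda}G)(\omega)=[(\lambda-i\omega)^{\alpha}-\lambda^{\alpha}]\widehat{G}(\omega)$. The coefficient of the $\mathcal{O}(h)$ term is
\begin{equation*}
\bigl[(1-\tfrac{\alpha}{2})r_{1}+(-\tfrac{\alpha}{2})r_{2}+(-1-\tfrac{\alpha}{2})r_{3}\bigr]\bigl[(\lambda-i\omega)^{\alpha+1}-\lambda^{\alpha+1}\bigr]\widehat{G}(\omega),
\end{equation*}
and the bracket simplifies to $r_{1}-r_{3}-\tfrac{\alpha}{2}$ because $r_{1}+r_{2}+r_{3}=1$. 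Setting this to zero gives $r_{1}=\tfrac{\alpha}{2}+r_{3}$ and hence $r_{2}=\tfrac{2-\alpha}{2}-2r_{3}$, recovering precisely the conditions in (\ref{2.6}) with $r_{3}$ free.

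With these conditions the residual Fourier symbol is
\begin{equation*}
\widehat{\phi}(\omega)=h^{2}\bigl[C_{1}\,(\lambda-i\omega)^{\alpha+2}+C_{2}\,\lambda^{\alpha+2}\bigr]\widehat{G}(\omega)+\mathcal{O}(h^{3}),
\end{equation*}
where $C_{1},C_{2}$ depend only on $\alpha$ and $r_{3}$. Finally, since $\mathcal{F}[_{-\infty}\nabla_x^{\alpha+2,\lambda}G]\in L_{1}(\mathbb{R})$ and $(\lambda-i\omega)^{\alpha+2}\widehat{G}(\omega)$ is, up to $\lambda^{\alpha+2}\widehat{G}$, this Fourier transform, I would invert by
\begin{equation*}
\bigl|\mathcal{_{-\infty}}\nabla_x^{\alpha,\lambda}G(x)-A_{r_1,r_2,r_3}^{\alpha,\lambda}G(x)\bigr|\le \frac{1}{2\pi}\int_{\mathbb{R}}|\widehat{\phi}(\omega)|\,d\omega=\mathcal{O}(h^{2}),
\end{equation*}
exactly as in the last display of the proof of Lemma~\ref{lemma2.3}.

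I expect the main obstacle to be only bookkeeping: obtaining the coefficient $c_{2}(p,\alpha)$ cleanly enough that the $h^{2}$ remainder is visibly linear in the Fourier transforms that the hypothesis controls, so that the passage from a symbol bound to a pointwise bound via Riemann--Lebesgue is truly uniform in $x$. Everything else is a direct lift of the $L=1$ argument, and the choice of three shift values $\{1,0,-1\}$ leaves exactly one free parameter, matching the one-parameter family described in (\ref{2.6}).
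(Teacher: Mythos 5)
Your proposal is correct and follows essentially the same route as the paper: both form the Fourier symbol of $A_{r_1,r_2,r_3}^{\alpha,\lambda}$ from the expansion $\omega_{p,h}(z)=1+(p-\tfrac{\alpha}{2})zh+\mathcal{O}(h^2)$ established in the proof of Lemma~\ref{lemma2.3}, cancel the constant and $\mathcal{O}(h)$ terms by the conditions (\ref{2.6}), and invert via the $L_1$ bound on $\widehat{\phi}$. Your version is slightly more explicit about where the hypothesis on $_{-\infty}\nabla_x^{\alpha+2,\lambda}G$ enters the $h^2$ remainder, but the argument is the same.
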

\begin{proof}
From Lemma \ref{lemma2.3}, it leads to
\begin{equation*}
\begin{split}
&\mathcal{F}(A_{r_1,r_2,r_3}^{\alpha,\lambda}G)(\omega)\\
&=(\lambda-i\omega)^{\alpha}\left[r_1\omega_{1,h}(\lambda-i\omega)+r_2\omega_{0,h}(\lambda-i\omega)+r_{3}\omega_{-1,h}(\lambda-i\omega)\right]\widehat{G}(\omega)\\
&\quad-\lambda^\alpha \left[r_1\omega_{1,h}(\lambda)+r_2\omega_{0,h}(\lambda)+r_{3}\omega_{-1,h}(\lambda)\right]\widehat{G}(\omega),
\end{split}
\end{equation*}
and using (\ref{2.4}), we have
$$r_1\omega_{1,h}(z)+r_2\omega_{0,h}(z)+r_{3}\omega_{-1,h}(z)=1+ \mathcal{O}(h^2).$$
Therefore, from Lemma \ref{lemma2.2},  we obtain
\begin{equation*}
\begin{split}
\mathcal{F}(A_{r_1,r_2,r_3}^{\alpha,\lambda}G)(\omega)=\mathcal{F}(_{-\infty}\nabla_x^{\alpha,\lambda}G)(\omega)+ \widehat{\phi}(\omega)
  \end{split}
\end{equation*}
with $|\widehat{\phi}(\omega)|=\mathcal{O}(h^2)$. It yields
\begin{equation*}
\begin{split}
| \mathcal{_{-\infty}}\nabla_x^{\alpha,\lambda}G(x)-A_{r_1,r_2,r_3}^{\alpha,\lambda}G(x)|
=|\phi(x)| \leq \frac{1}{2\pi}\int_{\mathbb{R}}|\widehat{\phi}(\omega)|dx=\mathcal{O}(h^2).
  \end{split}
\end{equation*}

\end{proof}

Assume that the well-defined function $G(x)$ can be zero extended from the bounded domain $(a, b)$ to $(-\infty, b)$, and satisfy the requirements of the above corresponding theorems; and
 \begin{equation}\label{2.7}
  \begin{split}
\widetilde{A}_{p}^{\alpha,\lambda}G(x_i)=\frac{1}{h^{\alpha}}\sum_{j=0}^{i+p}e^{-(j-p)\lambda h}g_j^{\alpha}G(x_{i-j+p})
 -e^{ph\lambda}\frac{(1-e^{-h\lambda})^\alpha}{h^\alpha}G(x_i).
\end{split}
\end{equation}
Then
 \begin{equation}\label{2.8}
 \begin{split}
_{a}\nabla_{x}^{\alpha,\lambda}G(x_i)=\widetilde{A}_{p}^{\alpha,\lambda}G(x_i)+\mathcal{O}(h);
 \end{split}
\end{equation}
and
\begin{equation}\label{2.9}
 \begin{split}
_{a}\nabla_{x}^{\alpha,\lambda}G(x_i)
&=\widetilde{A}_{r_1,r_2,r_3}^{\alpha,\lambda}G(x_i)+\mathcal{O}(h^2)\\
%&=r_1\widetilde{A}_{1}^{\alpha,\lambda}G(x_i)+r_2\widetilde{A}_{0}^{\alpha,\lambda}G(x_i)
%  +r_3\widetilde{A}_{-1}^{\alpha,\lambda}G(x_i)+\mathcal{O}(h^2)\\
%&=\frac{1}{h^{\alpha}}\left[\sum_{j=0}^{i+1}e^{-(j-1)\lambda h}r_1g_j^{\alpha}G(x_{i-j+1})
%+\sum_{j=0}^{i}e^{-j\lambda h}r_2g_j^{\alpha}G(x_{i-j})\right.\\
%&\quad\left.+\sum_{j=0}^{i-1}e^{-(j+1)\lambda h}r_3g_j^{\alpha}G(x_{i-j-1})\right]+\mathcal{O}(h^2)\\
&=\frac{1}{h^{\alpha}}\sum_{j=0}^{i+1}e^{-(j-1)\lambda h}\omega_j^{\alpha}G(x_{i-j+1})+\mathcal{O}(h^2).
 \end{split}
\end{equation}
Here
\begin{equation}\label{2.10}
 \begin{split}
&\widetilde{A}_{r_1,r_2,r_3}^{\alpha,\lambda}G(x_i)
=\left(r_1\widetilde{A}_{1}^{\alpha,\lambda}+r_2\widetilde{A}_{0}^{\alpha,\lambda}
  +r_3\widetilde{A}_{-1}^{\alpha,\lambda}\right)G(x_i);
 \end{split}
\end{equation}
and
\begin{equation}\label{2.11}
\begin{split}
&\omega_{0}^\alpha=r_1g_{0}^\alpha; ~~~~
\omega_{1}^\alpha=r_1g_{1}^\alpha+r_2g_{0}^\alpha-\left(r_1e^{\lambda h}+r_2+r_3e^{-\lambda h}\right)\left(1-e^{-\lambda h}\right)^{\alpha};\\
&\omega_{j}^\alpha=r_1g_{j}^\alpha+r_2g_{j-1}^\alpha+r_3g_{j-2}^\alpha, ~~~~2\leq j \leq M-1.
\end{split}
\end{equation}

\begin{remark} \label{remark2.2}(\cite{Tian:12})
When employing the difference method with (\ref{2.7}) for approximating non-periodic boundary problems on bounded interval,
$p$ should be chosen satisfying $|p|\leq 1$ to ensure that the nodes at which the values of $G$ are within the bounded interval.
\end{remark}

Let $|p|\leq 1$ and  $\widetilde{G}=[G({x_1}),G({x_2}),\cdots,G({x_{M-1}})]^{\rm T}$.
Then (\ref{2.10}) can be rewritten as the following matrix form
\begin{equation}\label{2.12}
  \begin{split}
&\widetilde{A}_{r_1,r_2,r_3}^{\alpha,\lambda}\widetilde{G}=\frac{1}{h^{\alpha}}A^{\alpha,\lambda} \widetilde{G}
%=\frac{1}{h^{\alpha}}\Big[r_1{A}_{1}^{\alpha,\lambda}+r_2{A}_{0}^{\alpha,\lambda}+r_3{A}_{-1}^{\alpha,\lambda}\Big] \widetilde{G},
\end{split}
\end{equation}
with
\begin{equation}\label{2.13}
  A^{\alpha,\lambda}=\left [ \begin{matrix}
e^{0\lambda h}\omega_1^\alpha               &e^{\lambda h}\omega_{0}^\alpha             &                     \\
e^{-\lambda h}\omega_{2}^\alpha             &e^{0\lambda h}\omega_1^\alpha              &e^{\lambda h}\omega_{0}^\alpha  &                      \\
e^{-2\lambda h}\omega_{3}^\alpha            &e^{-\lambda h}\omega_{2}^\alpha            &e^{0\lambda h}\omega_1^\alpha   &\ddots        \\
\vdots                                 &  \cdots                              &    \ddots  & \ddots  &   \ddots                        \\
e^{-(M-3)\lambda h}\omega_{M-2}^\alpha  &\cdots                                &  \cdots    & \ddots  &e^{0\lambda h}\omega_{1}^\alpha  &e^{\lambda h}\omega_{0}^\alpha \\
e^{-(M-2)\lambda h}\omega_{M-1}^\alpha  &e^{-(M-3)\lambda h}\omega_{M-2}^\alpha &  \cdots    & \cdots  &e^{-\lambda h}\omega_{2}^\alpha  &e^{0\lambda h}\omega_{1}^\alpha
 \end{matrix}
 \right ].
\end{equation}

Similarly, for the right Riemann-Liouville tempered fractional derivative, the second order approximation is given as
\begin{equation}\label{2.14}
 \begin{split}
_{x}\nabla_{b}^{\alpha,\lambda}G(x_i)
&=\widetilde{B}_{r_1,r_2,r_3}^{\alpha,\lambda}G(x_i)+\mathcal{O}(h^2)\\
&=\frac{1}{h^{\alpha}}\sum_{j=0}^{M-i+1}e^{-(j-1)\lambda h}\omega_j^{\alpha}G(x_{i+j-1})+\mathcal{O}(h^2),
 \end{split}
\end{equation}
where  $\omega_j^{\alpha}$ is defined by (\ref{2.11}), and the matrix  form is
\begin{equation}\label{2.15}
  \begin{split}
&\widetilde{B}_{r_1,r_2,r_3}^{\alpha,\lambda}U=\frac{1}{h^{\alpha}}B^{\alpha,\lambda} U
~~{\rm with}~~B^{\alpha,\lambda}=(A^{\alpha,\lambda})^T.
\end{split}
\end{equation}

In the following, we focus on how to choose the parameters $r_3$ such that
all the eigenvalues of the matrix $A^{\alpha,\lambda}$
have negative real parts and $A^{\alpha,\lambda}+(A^{\alpha,\lambda})^T $ is diagonally dominant; this means that the corresponding schemes work for space tempered fractional derivatives. Firstly, we present several useful lemmas.
\begin{definition}\cite[p.\,27]{Quarteroni:07}\label{definition2.5}
A matrix $A \in \mathbb{C}^{n\times n}$ is positive definite in $\mathbb{C}^{n}$ if the real number  $(Ax,x)>0$ for all
$ x \in \mathbb{C}^{n}$, $x\neq 0$.
\end{definition}

\begin{lemma}\cite[p.\,27]{Quarteroni:07}\label{lemma2.6}
A square matrix $A$ of order $n$ is positive definite in  $\mathbb{C}^{n}$ if and only if it is hermitian and has positive eigenvalues.
\end{lemma}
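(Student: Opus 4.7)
The plan is to prove both implications, noting that the crucial point is that working over $\mathbb{C}$ rather than $\mathbb{R}$ makes the scalar $(Ax, x)$ sufficient to determine $A$ up to the Hermitian condition, a phenomenon that has no real analogue.

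For the $(\Rightarrow)$ direction, I would first exploit the built-in reality clause in Definition~\ref{definition2.5}: since $(Ax, x)\in\mathbb{R}$ for every $x$, one has $(Ax, x)=\overline{(Ax, x)}=(A^{*} x, x)$, so $B:=A-A^{*}$ satisfies $(Bx, x)=0$ identically on $\mathbb{C}^{n}$. A complex polarization argument, obtained by expanding $(B(x+y), x+y)=0$ and $(B(x+iy), x+iy)=0$ and combining the two identities, forces $(Bx, y)=0$ for all $x, y$, hence $B=0$ and $A$ is Hermitian. Positivity of every eigenvalue $\lambda$ then follows immediately from $(Av, v)=\lambda\|v\|^{2}>0$ applied to a corresponding eigenvector $v\neq 0$.

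For the $(\Leftarrow)$ direction, Hermiticity of $A$ together with the spectral theorem produces a unitary $U$ and a diagonal matrix $D=\mathrm{diag}(\lambda_{1},\ldots,\lambda_{n})$ of positive real eigenvalues with $A=U^{*}DU$. For any $x\neq 0$, writing $y=Ux\neq 0$ yields
\[
(Ax, x)=(DUx, Ux)=\sum_{k=1}^{n}\lambda_{k}|y_{k}|^{2}>0,
\]
and this quantity is manifestly a real number, which verifies Definition~\ref{definition2.5}.

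The main obstacle I expect is the polarization step in the forward direction, since the corresponding claim fails over $\mathbb{R}$: any real skew-symmetric matrix $B$ satisfies $(Bx, x)=0$ without being zero. The argument must therefore essentially use the substitution $x\mapsto x+iy$, which is available only because the scalars are complex; I would emphasize that this is where the hypothesis ``positive definite in $\mathbb{C}^{n}$'' (rather than merely in $\mathbb{R}^{n}$) is decisive. Once $B=0$ has been established, the remainder of the proof is a routine application of the spectral theorem for Hermitian matrices.
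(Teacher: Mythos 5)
The paper does not prove this lemma at all --- it is quoted verbatim from Quarteroni, Sacco, and Saleri (p.~27) as a known fact, so there is no in-paper argument to compare against. Your proof is correct and is the standard one: the complex polarization identity (using the substitution $x\mapsto x+iy$) to extract Hermiticity from the reality of $(Ax,x)$, evaluation on eigenvectors for positivity of the spectrum, and the spectral theorem for the converse; you also correctly flag why the forward implication fails over $\mathbb{R}$, which is precisely the distinction the paper is relying on when it separately invokes Lemma~\ref{lemma2.7} for real matrices.
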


\begin{lemma}\cite[p.\,28]{Quarteroni:07}\label{lemma2.7}
A real matrix $A$ of order $n$ is positive definite  if and only if  its symmetric part $H=\frac{A+A^T}{2}$ is positive definite.
\end{lemma}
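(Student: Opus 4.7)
The plan is to reduce the claim to the key identity $\operatorname{Re}(x^{\ast}Ax) = x^{\ast}Hx$ for every $x \in \mathbb{C}^n$; once this identity is established, the two notions of positive-definiteness coincide almost by inspection.

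First I would decompose $A = H + K$ with $H = (A+A^{T})/2$ symmetric and $K = (A-A^{T})/2$ antisymmetric, so that $x^{\ast}Ax = x^{\ast}Hx + x^{\ast}Kx$ for any $x \in \mathbb{C}^n$. Using that $H$ and $K$ are real with $H^{T}=H$ and $K^{T}=-K$, direct conjugation gives $\overline{x^{\ast}Hx} = x^{T}H\bar{x} = \bar{x}^{T}H^{T}x = x^{\ast}Hx$ and, analogously, $\overline{x^{\ast}Kx} = \bar{x}^{T}K^{T}x = -x^{\ast}Kx$. Hence $x^{\ast}Hx \in \mathbb{R}$ while $x^{\ast}Kx \in i\mathbb{R}$, and therefore $\operatorname{Re}(x^{\ast}Ax) = x^{\ast}Hx$.

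The next step is to interpret Definition \ref{definition2.5}. For a generic real, non-symmetric $A$ the scalar $x^{\ast}Ax$ need not be real, so the phrase ``the real number $(Ax,x)$'' must be understood as $\operatorname{Re}(x^{\ast}Ax)$, which is the standard convention in numerical linear algebra. Under this reading, $A$ is positive definite in $\mathbb{C}^n$ if and only if $x^{\ast}Hx > 0$ for every nonzero $x \in \mathbb{C}^n$; since $H$ is real symmetric and therefore Hermitian, Lemma \ref{lemma2.6} identifies this in turn with $H$ being positive definite in the sense of Definition \ref{definition2.5}, completing both directions of the equivalence.

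The main (minor) obstacle is purely conceptual: one has to fix the convention that the symbol $(Ax,x)$ in Definition \ref{definition2.5} denotes $\operatorname{Re}(x^{\ast}Ax)$ rather than $x^{\ast}Ax$ itself, since otherwise no non-symmetric real matrix could ever satisfy the definition. Once that convention is accepted, the computational content of the proof collapses to the short conjugation argument above.
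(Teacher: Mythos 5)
Your argument is mathematically sound, but note that the paper offers no proof of this lemma at all --- it is quoted verbatim from Quarteroni--Sacco--Saleri, so the relevant comparison is with the standard textbook argument. That argument is purely real: writing $A=H+K$ with $K=(A-A^{T})/2$ antisymmetric, one observes that for every \emph{real} $x$ the scalar $x^{T}Kx$ equals its own transpose $x^{T}K^{T}x=-x^{T}Kx$, hence vanishes, so $x^{T}Ax=x^{T}Hx$ and the equivalence of the two positive-definiteness statements is immediate. Your route is a complexified version of the same decomposition: the identity $\operatorname{Re}(x^{\ast}Ax)=x^{\ast}Hx$ is correct and, restricted to real $x$, reduces exactly to the textbook identity. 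What your version buys is a statement valid over $\mathbb{C}^{n}$, which is closer in spirit to how the paper actually uses the lemma (in Theorem \ref{theorem2.11}, via Lemma \ref{lemma2.6} and Lemma \ref{lemma2.8}). What it costs is the interpretive detour in your last two paragraphs: the lemma concerns a \emph{real} matrix being positive definite in the real sense, where $(Ax,x)=x^{T}Ax$ is automatically a real number for real $x$, so there is no need to reread Definition \ref{definition2.5} as $\operatorname{Re}(x^{\ast}Ax)$. That convention is defensible but is doing no work here; if you simply restrict your key identity to $x\in\mathbb{R}^{n}$, the "conceptual obstacle" you describe disappears and the proof becomes the one-line antisymmetry argument. (For the symmetric part $H$ the real and complex notions of positive definiteness coincide, since $z^{\ast}Hz=u^{T}Hu+v^{T}Hv$ for $z=u+iv$, so nothing is lost by working over $\mathbb{R}^{n}$.)
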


\begin{lemma}\cite[p.\,184]{Quarteroni:07}\label{lemma2.8}
Let $A \in \mathbb{C}^{n \times n}$ and  $H=\frac{A+A^H}{2}$ be the hermitian part of $A$.  Then for any eigenvalue $\lambda$ of  $A$,
the real part $\Re(\lambda(A))$ satisfies
\begin{equation*}
  \lambda_{\min}(H) \leq \Re(\lambda(A)) \leq \lambda_{\max}(H),
\end{equation*}
where $\lambda_{\min}(H)$ and $\lambda_{\max}(H)$ are the minimum and maximum of the eigenvalues of $H$, respectively.
\end{lemma}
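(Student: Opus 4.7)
The plan is to prove Lemma 2.8 via the Rayleigh quotient, using the fact that the Hermitian and skew-Hermitian parts of $A$ contribute independently to the real and imaginary parts of any quadratic form $x^H A x$. Let $\lambda$ be an eigenvalue of $A$ with a unit eigenvector $x \in \mathbb{C}^n$, so that $Ax = \lambda x$ and $x^H x = 1$. Multiplying on the left by $x^H$ gives $x^H A x = \lambda$, so it suffices to show that the real part of the scalar $x^H A x$ lies between $\lambda_{\min}(H)$ and $\lambda_{\max}(H)$.

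The first step is the splitting $A = H + S$, where $H = \tfrac{1}{2}(A + A^H)$ is Hermitian and $S = \tfrac{1}{2}(A - A^H)$ is skew-Hermitian. Plugging in, $x^H A x = x^H H x + x^H S x$. Since $H^H = H$, the quantity $x^H H x$ is real (it equals its own conjugate transpose, which is a scalar). Since $S^H = -S$, the quantity $x^H S x$ satisfies $\overline{x^H S x} = x^H S^H x = -x^H S x$, hence is purely imaginary. Therefore $\Re(\lambda) = \Re(x^H A x) = x^H H x$.

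The final step is to invoke the Courant--Fischer / Rayleigh quotient characterization for the Hermitian matrix $H$: for every unit vector $x \in \mathbb{C}^n$,
\begin{equation*}
\lambda_{\min}(H) \le x^H H x \le \lambda_{\max}(H).
\end{equation*}
This is standard and follows by diagonalizing $H = U \Lambda U^H$ with $U$ unitary and $\Lambda$ diagonal real, writing $y = U^H x$, and noting $x^H H x = \sum_i \lambda_i(H) |y_i|^2$ with $\sum_i |y_i|^2 = 1$. Combining this with the identity $\Re(\lambda) = x^H H x$ yields the desired two-sided bound.

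There is no significant obstacle here; the result is a classical consequence (Bendixson's theorem) and the only care needed is in checking that the skew-Hermitian contribution is purely imaginary, which the sign identity $\overline{x^H S x} = -x^H S x$ handles cleanly. Since the eigenvector $x$ may be complex even when $A$ is real, it is essential to work in $\mathbb{C}^n$ throughout and to use the conjugate transpose rather than the transpose — a point worth emphasizing given that the lemma is later applied in the paper to real (but non-symmetric) matrices $A^{\alpha,\lambda}$ arising from the space discretization.
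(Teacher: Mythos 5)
Your proof is correct and complete; the paper itself gives no proof of this lemma, simply citing it from \cite[p.\,184]{Quarteroni:07}, and your Rayleigh-quotient argument (splitting $A$ into Hermitian and skew-Hermitian parts and applying Courant--Fischer to $H$) is exactly the standard proof of this classical result. The closing remark about needing the conjugate transpose even for real non-symmetric $A$ is a correct and relevant observation for how the lemma is used later in the paper.
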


\begin{lemma}\label{lemma2.9}
Let $A^{\alpha,\lambda}$ be given in (\ref{2.13}) with $\lambda \geq 0$, $h>0$ and   $1<\alpha<2$. If $r_3$ in (\ref{2.6}) satisfies
\begin{equation}\label{2.16}
\max \left \{ -\frac{\alpha(\alpha-1)(\alpha+2)}{2(\alpha^2+3\alpha+4)}, -\frac{(2-\alpha)(8-\alpha^2-\alpha)}{2(\alpha+1)(\alpha+2)}  \right\}
\leq r_3 \leq \frac{(\alpha-1)(2-\alpha)(\alpha+3)}{2(\alpha+1)(\alpha+2)},
\end{equation}
then the elements, denoted by $\phi_j^\alpha$, of $H=A^{\alpha,\lambda}+(A^{\alpha,\lambda})^T$ defined by (\ref{2.18}) satisfy
\begin{equation}\label{2.17}
\phi_1^{\alpha}<0, \,\phi_0^{\alpha}+\phi_2^{\alpha}\geq 0, \, \phi_3^{\alpha}>0, ~~\rm{and}~~ \phi_j^{\alpha}>0 ~ {\rm when} ~ j\geq 4.
\end{equation}
\end{lemma}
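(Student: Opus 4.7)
The plan is to first make the matrix $H = A^{\alpha,\lambda}+(A^{\alpha,\lambda})^T$ explicit by reading off its diagonals directly from (\ref{2.13}). Because $A^{\alpha,\lambda}$ has only one nonzero superdiagonal (of value $e^{\lambda h}\omega_0^\alpha$) while its lower triangular part is full Toeplitz, the symmetrization yields $\phi_0^\alpha=2\omega_1^\alpha$, $\phi_1^\alpha=e^{\lambda h}\omega_0^\alpha+e^{-\lambda h}\omega_2^\alpha$, and $\phi_j^\alpha=e^{-j\lambda h}\omega_{j+1}^\alpha$ for $j\geq 2$, which I expect to match the labelling of (2.18). This reduces the lemma to a short list of scalar sign inequalities purely in the $\omega_k^\alpha$'s.

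Next I would substitute the representation (\ref{2.11}) for the $\omega_k^\alpha$ into each $\phi_j^\alpha$ and use (\ref{2.6}) to eliminate $r_1$ and $r_2$ in favour of $r_3$ and $\alpha$. Each $\phi_j^\alpha$ then becomes an affine function of $r_3$ whose coefficients are polynomials in $\alpha$, with, only for $\phi_1^\alpha$, an additional boundary term proportional to $(1-e^{-\lambda h})^\alpha$. The explicit Gr\"unwald values $g_0^\alpha=1$, $g_1^\alpha=-\alpha$, $g_2^\alpha=\alpha(\alpha-1)/2$, $g_3^\alpha=-\alpha(\alpha-1)(\alpha-2)/6$ obtained from (\ref{2.2}) feed directly into closed-form expressions for $\phi_0^\alpha,\phi_1^\alpha,\phi_2^\alpha,\phi_3^\alpha$, each of which I would compare against the corresponding claim in (\ref{2.17}).

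For the tail inequality $\phi_j^\alpha>0$ with $j\geq 4$, I would use the recursion $g_j^\alpha=(1-(\alpha+1)/j)g_{j-1}^\alpha$ to show that $g_k^\alpha>0$ and $g_k^\alpha$ is monotonically decreasing for $k\geq 2$ when $1<\alpha<2$. Combined with the lower bounds on $r_3$ in (\ref{2.16}), this monotonicity forces $\omega_j^\alpha=r_1 g_j^\alpha+r_2 g_{j-1}^\alpha+r_3 g_{j-2}^\alpha>0$ for every $j\geq 4$, so that $\phi_j^\alpha>0$ follows at once. The most delicate of the four inequalities is $\phi_3^\alpha>0$, which I expect to be precisely what produces the upper bound $r_3\leq (\alpha-1)(2-\alpha)(\alpha+3)/(2(\alpha+1)(\alpha+2))$, while $\phi_1^\alpha<0$ and $\phi_0^\alpha+\phi_2^\alpha\geq 0$ produce the two lower bounds in (\ref{2.16}); matching these is what explains the specific form of the constants there.

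The main obstacle I anticipate is controlling the $\lambda h>0$ dependence in $\phi_1^\alpha$, because the correction $-(r_1 e^{\lambda h}+r_2+r_3 e^{-\lambda h})(1-e^{-\lambda h})^\alpha$ entering $\omega_1^\alpha$ is neither polynomial in $\alpha$ nor manifestly monotone in $r_3$. I would handle it by showing that in the admissible range of $r_3$ the bracket $r_1 e^{\lambda h}+r_2+r_3 e^{-\lambda h}$ is positive, so that the whole correction is negative and only strengthens the inequality $\phi_1^\alpha<0$; this reduces the verification to the $\lambda=0$ case, which is a clean polynomial inequality in $\alpha$ and $r_3$. For $\phi_0^\alpha+\phi_2^\alpha\geq 0$ the $\lambda$-dependence should be absorbed similarly by using $e^{k\lambda h}+e^{-k\lambda h}\geq 2$, after which the binding polynomial inequality factors through $(\alpha-1)(2-\alpha)$, allowing one to exploit $1<\alpha<2$ to close the argument.
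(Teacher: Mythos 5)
Your overall strategy---write each entry of $H$ as an affine function of $r_3$ with polynomial coefficients in $\alpha$, strip off the exponential factors, and read off the three bounds in (\ref{2.16})---is exactly the paper's, but several concrete steps would fail as written. First, your identification of the $\phi_j^{\alpha}$ is shifted and inconsistent with the claims you must prove: in (\ref{2.18})--(\ref{2.19}) one has $\phi_1^{\alpha}=\omega_1^{\alpha}$ (the diagonal of $H$ is $2\phi_1^{\alpha}$), $\phi_0^{\alpha}=e^{\lambda h}\omega_0^{\alpha}$, $\phi_2^{\alpha}=e^{-\lambda h}\omega_2^{\alpha}$, and $\phi_j^{\alpha}=e^{-(j-1)\lambda h}\omega_j^{\alpha}$ for $j\geq 3$. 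Under your labelling ($\phi_0^{\alpha}=2\omega_1^{\alpha}$, $\phi_1^{\alpha}=e^{\lambda h}\omega_0^{\alpha}+e^{-\lambda h}\omega_2^{\alpha}$) the assertion $\phi_1^{\alpha}<0$ is false and $\phi_0^{\alpha}+\phi_2^{\alpha}\geq 0$ is also false, so the proof cannot be carried out in that notation. Relatedly, your attribution of the bounds is wrong: $\phi_1^{\alpha}<0$ only requires $r_3>-(\alpha-1)/2$, which is implied by (\ref{2.16}) but is not one of its displayed constants; the second lower bound $-\tfrac{(2-\alpha)(8-\alpha^2-\alpha)}{2(\alpha+1)(\alpha+2)}$ comes from the tail condition $\phi_j^{\alpha}>0$, $j\geq 4$, evaluated at the binding case $j=4$.

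Second, the tail argument via ``$g_k^{\alpha}$ positive and decreasing for $k\geq 2$'' does not close: in $\omega_j^{\alpha}=r_1g_j^{\alpha}+r_2g_{j-1}^{\alpha}+r_3g_{j-2}^{\alpha}$ the possibly negative coefficient $r_3$ multiplies the \emph{largest} of the three weights, so monotonicity works against you. You need the exact ratios from the recursion (\ref{2.2}), which give
\begin{equation*}
\omega_{j}^\alpha=\left[r_3\frac{(\alpha+1)(\alpha+2) }{j(j-1)}  +\frac{(j-\alpha-2)(2j-\alpha^2-\alpha)}{2j(j-1)}  \right]g_{j-2}^\alpha ,
\end{equation*}
followed by the observation that the bracket is minimized over $j\geq4$ at $j=4$, which is precisely where the constant $\tfrac{(2-\alpha)(8-\alpha^2-\alpha)}{2(\alpha+1)(\alpha+2)}$ comes from. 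Finally, for $\phi_0^{\alpha}+\phi_2^{\alpha}\geq 0$ the inequality $e^{k\lambda h}+e^{-k\lambda h}\geq 2$ is not applicable because the two exponentials multiply \emph{different} coefficients, one of which ($\omega_2^{\alpha}$) can be negative; the correct reduction to $\lambda=0$ is $e^{\lambda h}\omega_0^{\alpha}\geq e^{-\lambda h}\omega_0^{\alpha}$, valid because $\omega_0^{\alpha}=r_1>0$, after which $\omega_0^{\alpha}+\omega_2^{\alpha}=r_3\tfrac{\alpha^2+3\alpha+4}{2}+\tfrac{\alpha(\alpha-1)(\alpha+2)}{4}$ yields the first lower bound.
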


\begin{proof}
Denote $H=A^{\alpha,\lambda}+(A^{\alpha,\lambda})^T$.  Using (\ref{2.13}) and (\ref{2.11}), we obtain
\begin{equation}\label{2.18}
H=\left [ \begin{matrix}
2\phi_1^{\alpha}   &\phi_0^{\alpha}+\phi_2^{\alpha}&\phi_3^{\alpha}     &      \cdots   &  \phi_{M-2}^{\alpha}     &  \phi_{M-1}^{\alpha}  \\
\phi_0^{\alpha}+\phi_2^{\alpha}&  2\phi_1^{\alpha}    &\phi_0^{\alpha}+\phi_2^{\alpha}&  \phi_3^{\alpha}    &     \cdots   &  \phi_{M-2}^{\alpha} \\
\phi_3^{\alpha}             &\phi_0^{\alpha}+\phi_2^{\alpha}&2\phi_1^{\alpha}         & \phi_0^{\alpha}+\phi_2^{\alpha}&     \ddots              & \vdots  \\
\vdots                   &          \ddots         &       \ddots            &        \ddots            &      \ddots             &  \phi_3^{\alpha}  \\
\phi_{M-2}^{\alpha}  &   \ddots         &       \ddots        &        \ddots            &   2\phi_1^{\alpha}         & \phi_0^{\alpha}+\phi_2^{\alpha} \\
\phi_{M-1}^{\alpha}   &    \phi_{M-2}^{\alpha}   &   \cdots          &         \cdots           &\phi_0^{\alpha}+\phi_2^{\alpha}& 2\phi_1^{\alpha}
 \end{matrix}
 \right ],
\end{equation}
where
\begin{equation}\label{2.19}
\phi_0^{\alpha}=e^{\lambda h}\omega_0^\alpha; ~~\phi_1^{\alpha}=e^{0\lambda h}\omega_1^\alpha;
~~\phi_2^{\alpha}=e^{-\lambda h}\omega_2^\alpha;~~\phi_j^{\alpha}=e^{-(j-1)\lambda h}\omega_j^\alpha,~~j \geq 3;
\end{equation}
and
\begin{equation*}
\begin{split}
&\omega_{0}^\alpha=r_1g_{0}^\alpha;
~~~~\omega_{1}^\alpha=r_1g_{1}^\alpha+r_2g_{0}^\alpha-\left(r_1e^{\lambda h}+r_2+r_3e^{-\lambda h}\right)\left(1-e^{-\lambda h}\right)^{\alpha};\\
&\omega_{j}^\alpha=r_1g_{j}^\alpha+r_2g_{j-1}^\alpha+r_3g_{j-2}^\alpha, ~~~~j \geq 2.
\end{split}
\end{equation*}
%If taking $r_1\geq 0$ and $r_2\geq 0$ in (\ref{2.6}), then $r_3$ satisfies
%\begin{equation*}
%-\frac{\alpha}{2}\leq r_3\leq \frac{2-\alpha}{4}.
%\end{equation*}
Next we  prove that, under the requirement of $(\ref{2.16})$,  $\phi_1^{\alpha}<0$, $\phi_0^{\alpha}+\phi_2^{\alpha}\geq 0$,  $\phi_3^{\alpha}>0$, and $\phi_j^{\alpha}>0$, $j \geq 4$.

Case   $\phi_1^{\alpha}<0$: From (\ref{2.2}) and (\ref{2.6}), we can check that
$\phi_1^{\alpha}<0$ when $r_3>-\frac{\alpha-1}{2}$. And it can also be noted that in this case $r_1>0$.

Case   $\phi_0^{\alpha}+\phi_2^{\alpha}\geq 0$: Using $r_1\geq 0$, $\lambda \geq 0$, and $h>0$, we obtain
\begin{equation*}
\begin{split}
\phi_0^{\alpha}+\phi_2^{\alpha}
&=e^{\lambda h}\omega_0^\alpha+e^{-\lambda h}\omega_2^\alpha
= e^{\lambda h}r_1g_{0}^\alpha+e^{-\lambda h}\left(r_1g_{2}^\alpha+r_2g_{1}^\alpha+r_3g_{0}^\alpha\right) \\
&\geq  e^{-\lambda h}\left(r_1g_{0}^\alpha+r_1g_{2}^\alpha+r_2g_{1}^\alpha+r_3g_{0}^\alpha\right) \\
&=e^{-\lambda h}\left[r_3\frac{\alpha^2+3\alpha+4}{2}
 +\frac{\alpha(\alpha-1)(\alpha+2)}{4}\right] \geq 0,
\end{split}
\end{equation*}
which leads to  $r_3 \geq -\frac{\alpha(\alpha-1)(\alpha+2)}{2(\alpha^2+3\alpha+4)}.$

Case   $\phi_3^{\alpha}>0$: According to (\ref{2.2}), (\ref{2.6}), and (\ref{2.19}), we get
\begin{equation*}
\begin{split}
\phi_3^{\alpha}
&=e^{-2\lambda h}\omega_3^\alpha
= e^{-2\lambda h}\left(r_1g_{3}^\alpha+r_2g_{2}^\alpha+r_3g_{1}^\alpha\right) \\
&=e^{-2\lambda h}\left[r_3\frac{-\alpha(\alpha+1)(\alpha+2)}{6}
 +\frac{\alpha(\alpha-1)(2-\alpha)(\alpha+3)}{12}\right] \geq 0, \\
\end{split}
\end{equation*}
if and only if $ r_3 \leq \frac{(\alpha-1)(2-\alpha)(\alpha+3)}{2(\alpha+1)(\alpha+2)}.$

Case  $\phi_j^{\alpha}>0$, $j \geq 4$: From (\ref{2.2}), (\ref{2.6}) and (\ref{2.19}), there exist
\begin{equation*}
\begin{split}
\phi_j^{\alpha}
&=e^{-(j-1)\lambda h}\omega_j^\alpha=e^{-(j-1)\lambda h}\left(r_1g_{j}^\alpha+r_2g_{j-1}^\alpha+r_3g_{j-2}^\alpha\right) \\
&=e^{-(j-1)\lambda h}\left[r_3\frac{(\alpha+1)(\alpha+2) }{j(j-1)}  +\frac{(j-\alpha-2)(2j-\alpha^2-\alpha)}{2j(j-1)}  \right]g_{j-2}^\alpha \geq 0,\\
\end{split}
\end{equation*}
which results in $r_3 \geq -\frac{(2-\alpha)(8-\alpha^2-\alpha)}{2(\alpha+1)(\alpha+2)}$.
Then, the proof is completed.
\end{proof}

\begin{lemma}\label{lemma2.10}
Denote the elements of $H$ defined in (\ref{2.18}) by $h_{i,j}$ and let $r_3$ satisfy  (\ref{2.16}).
%
%Let $H$ in (\ref{2.18}) with elements $h_{i,j}$ and $r_3$ satisfies  (\ref{2.16}).
Then there exist
\begin{equation*}
  \begin{split}
&(1) ~~ h_{i,i}=2\phi_1^{\alpha}<0,~~~~ h_{i,j} > 0,~~(j\neq i);\\
&(2) ~~ \sum\limits_{j=0}^{\infty}h_{i,j}= 0 ~~ \mbox {and} ~~
    -h_{i,i}>\!\!\!\!\! \sum\limits_{j=0,j\neq i}^{M}\!\!\!\!h_{i,j}.
  \end{split}
\end{equation*}
\end{lemma}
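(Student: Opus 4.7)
The plan is to prove (1) by direct inspection of (\ref{2.18}) combined with Lemma \ref{lemma2.9}. Reading off the entries, $h_{i,i}=2\phi_1^{\alpha}$, $h_{i,i\pm 1}=\phi_0^{\alpha}+\phi_2^{\alpha}$, and $h_{i,j}=\phi_{|i-j|+1}^{\alpha}$ when $|i-j|\ge 2$. Under (\ref{2.16}), Lemma \ref{lemma2.9} immediately gives $h_{i,i}<0$, $\phi_0^\alpha+\phi_2^\alpha\ge 0$, $\phi_3^\alpha>0$, and $\phi_j^\alpha>0$ for $j\ge 4$, which is exactly part (1).

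For part (2), the ``infinite row-sum'' identity $\sum_{j=0}^{\infty} h_{i,j}=0$ is equivalent, via the symmetric-Toeplitz structure of (\ref{2.18}), to
\begin{equation*}
S:=\sum_{k=0}^{\infty}\phi_k^{\alpha}=0.
\end{equation*}
To establish this I would evaluate the generating function (\ref{2.1}) at $\zeta=1$, namely
\begin{equation*}
\sum_{j=0}^{\infty} e^{-j\lambda h}g_j^{\alpha}=(1-e^{-\lambda h})^{\alpha},
\end{equation*}
and then substitute the definition (\ref{2.11}) of $\omega_k^{\alpha}$ into $\phi_k^{\alpha}=e^{-(k-1)\lambda h}\omega_k^{\alpha}$. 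Splitting $S$ into the three shifted geometric-type series coming from the $r_1g_k^\alpha$, $r_2g_{k-1}^\alpha$, $r_3g_{k-2}^\alpha$ contributions and factoring out the prefactors $e^{\lambda h}$, $1$, $e^{-\lambda h}$ respectively, each series sums to $r_i e^{\pm\lambda h}(1-e^{-\lambda h})^{\alpha}$. Their total $(r_1 e^{\lambda h}+r_2+r_3 e^{-\lambda h})(1-e^{-\lambda h})^{\alpha}$ is then cancelled \emph{exactly} by the correction term subtracted in $\omega_1^{\alpha}$ in (\ref{2.11}). Hence $S=0$.

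For the strict diagonal dominance, I would compare the finite row sum $\sum_{j=0,j\neq i}^{M}h_{i,j}$ with the (vanishing) infinite one. The truncation to a finite row omits all entries $\phi_{|i-j|+1}^{\alpha}$ with $|i-j|$ large enough to lie outside $[0,M]$. By Lemma \ref{lemma2.9} these omitted coefficients are strictly positive (from index $j\ge 4$ onwards), so for every $i$ at least one strictly positive term of the infinite row is discarded. Consequently the finite row sum is strictly less than $0$, and combining with $h_{i,i}<0$ yields $-h_{i,i}>\sum_{j=0,j\neq i}^{M}h_{i,j}$, as required.

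The main obstacle will be the bookkeeping in the second paragraph: the recursion defining $\omega_k^{\alpha}$ in (\ref{2.11}) has different formulas at the low indices $k=0,1$ and $k\ge 2$, so one must carefully re-index the three shifted series (accounting for which indices begin at $k=0$, $k=1$, $k=2$) and verify that the boundary contributions combine with the explicit correction in $\omega_1^{\alpha}$ to produce the clean cancellation. Everything else is a direct consequence of Lemma \ref{lemma2.9}.
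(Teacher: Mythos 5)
Your proposal is correct and follows essentially the same route as the paper: part (1) is read off from Lemma \ref{lemma2.9}, and the zero row-sum identity comes from substituting (\ref{2.11}) into $\sum_{k\ge 0}e^{-(k-1)\lambda h}\omega_k^\alpha$, using $\sum_{j\ge 0}e^{-j\lambda h}g_j^\alpha=(1-e^{-\lambda h})^\alpha$ from (\ref{2.1}) and observing that the three shifted series are cancelled exactly by the correction term in $\omega_1^\alpha$ (the paper organizes this as separate vanishing row sums for $A^{\alpha,\lambda}$ and its transpose, which is the same computation). Your explicit truncation argument for the strict dominance $-h_{i,i}>\sum_{j\neq i}h_{i,j}$ is the step the paper leaves implicit, and it is sound since the discarded tail entries $\phi_k^\alpha$, $k\ge 3$, are strictly positive.
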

\begin{proof}
From Lemma \ref{lemma2.9}, we have $h_{i,i}=2\phi_1^{\alpha}<0$ and $ h_{i,j} > 0\,\,(j\neq i)$.
Denote the elements of $A^{\alpha,\lambda}$ given in (\ref{2.13}) by $a_{i,j}$. Then from (\ref{2.11}), we have
\begin{equation*}
\begin{split}
\sum_{j=-\infty}^{i+1}a_{i,j}
&=a_{i,i+1}+a_{i,i}+a_{i,i-1}+\cdots+a_{i,i-j+1}+\cdots\\
&=e^{\lambda h}\omega_{0}^\alpha +e^{0\lambda h}\omega_{1}^\alpha+e^{-\lambda h}\omega_{2}^\alpha+e^{-2\lambda h}\omega_{3}^\alpha
+\cdots+e^{-(i-j)\lambda h}\omega_{i-j+1}^\alpha +\cdots \\
&=\sum_{j=0}^{\infty}e^{-(j-1)\lambda h}\omega_{j}^\alpha\\
&=\left(r_1e^{\lambda h}+r_2+r_3e^{-\lambda h}\right)\sum_{j=0}^{\infty}e^{-j\lambda h}g_j^{\alpha}
-\left(r_1e^{\lambda h}+r_2+r_3e^{-\lambda h}\right)\left(1-e^{-\lambda h}\right)^{\alpha}\\
&=0,
\end{split}
\end{equation*}
where $\sum\limits_{j=0}^{\infty}e^{-j\lambda h}g_j^{\alpha}=\left(1-e^{-\lambda h}\right)^{\alpha}$ obtained from (\ref{2.1}) is used.

Denote the elements of $(A^{\alpha,\lambda})^T$  by $b_{i,j}$. In a similar way, we have
\begin{equation*}
\begin{split}
\sum_{j=i-1}^{\infty}b_{i,j}=0.
\end{split}
\end{equation*}
Since $H=A^{\alpha,\lambda}+(A^{\alpha,\lambda})^T$,
it leads to
$$\sum_{j=-\infty}^{\infty}h_{i,j}=\sum_{j=-\infty}^{i+1}a_{i,j}+\sum_{j=i-1}^{\infty}b_{i,j}=0.$$
The proof is completed.
\end{proof}

\begin{theorem}\label{theorem2.11}
Let $A^{\alpha,\lambda}$ be given in (\ref{2.13}) with $\lambda \geq 0$, $h>0$,  $1<\alpha<2$, and $r_3$ in (\ref{2.16}).
Then any eigenvalue $\lambda$ of $A^{\alpha,\lambda}$ satisfies
 $$\Re(A^{\alpha,\lambda})<0.$$
Moreover, the matrixes $A^{\alpha,\lambda}$ and $(A^{\alpha,\lambda})^T$ are negative definite in $ \mathbb{R}^{n \times n}$.
%and $A^{\alpha,\lambda}+(A^{\alpha,\lambda})^T$ is negative definite in $ \mathbb{C}^{n \times n}$.
\end{theorem}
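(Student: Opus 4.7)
The plan is to reduce the theorem to showing that the symmetric matrix $H = A^{\alpha,\lambda}+(A^{\alpha,\lambda})^T$ from (\ref{2.18}) is negative definite, and then to invoke the tools already assembled in Lemmas \ref{lemma2.6}--\ref{lemma2.10}.

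First I would observe that once $H$ is known to be negative definite, both conclusions follow immediately. Lemma \ref{lemma2.8} applied to $A^{\alpha,\lambda}$, whose hermitian part is $H/2$, yields $\Re(\mu) \leq \lambda_{\max}(H)/2 < 0$ for every eigenvalue $\mu$ of $A^{\alpha,\lambda}$, giving the first statement. For the second statement, Lemma \ref{lemma2.7} identifies the negative definiteness of the real matrix $A^{\alpha,\lambda}$ with the negative definiteness of $H/2$; since $(A^{\alpha,\lambda})^T$ has the same symmetric part, the negative definiteness transfers to the transpose as well.

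So the real work is to prove that $H$ is negative definite, and the main obstacle is structural rather than computational: I must obtain strict negativity even though Lemma \ref{lemma2.9} only guarantees $\phi_0^{\alpha}+\phi_2^{\alpha}\geq 0$, which could a priori be zero. The trick is to work with $-H$ and use the strict row-sum inequality of Lemma \ref{lemma2.10}. By Lemma \ref{lemma2.9}, the diagonal of $-H$ equals $-2\phi_1^{\alpha}>0$, and all off-diagonal entries $-(\phi_0^{\alpha}+\phi_2^{\alpha})$, $-\phi_3^{\alpha}$, $-\phi_j^{\alpha}$ are nonpositive. By Lemma \ref{lemma2.10} we have $-h_{i,i}>\sum_{j\neq i} h_{i,j}$, i.e.\ the $i$-th row sum of $-H$ is strictly positive. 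Therefore $-H$ is a real symmetric strictly diagonally dominant matrix with positive diagonal, and Gershgorin's theorem localizes each of its (real) eigenvalues in a disc $|z-(-h_{i,i})|\leq \sum_{j\neq i}|h_{i,j}|$ whose entire intersection with $\mathbb{R}$ lies strictly in $(0,\infty)$. Lemma \ref{lemma2.6} then gives the positive definiteness of $-H$, hence the negative definiteness of $H$.

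Combining these two steps completes the proof: negative definiteness of $H$ together with Lemma \ref{lemma2.7} yields that both $A^{\alpha,\lambda}$ and $(A^{\alpha,\lambda})^T$ are negative definite in $\mathbb{R}^{n\times n}$, while Lemma \ref{lemma2.8} yields $\Re(\mu)<0$ for every eigenvalue $\mu$ of $A^{\alpha,\lambda}$.
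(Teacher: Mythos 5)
Your proposal is correct and follows essentially the same route as the paper: both arguments establish the (strict) negative definiteness of the symmetric matrix $H=A^{\alpha,\lambda}+(A^{\alpha,\lambda})^T$ via the sign pattern of Lemma \ref{lemma2.9}, the strict row-sum inequality of Lemma \ref{lemma2.10}, and Gershgorin's theorem, and then transfer the conclusion to $A^{\alpha,\lambda}$ and its transpose through Lemmas \ref{lemma2.6}--\ref{lemma2.8}. Your version is slightly more careful than the paper's in tracking the factor of $\tfrac12$ between $H$ and the hermitian part used in Lemma \ref{lemma2.8} and in flagging why strictness survives the possibility $\phi_0^{\alpha}+\phi_2^{\alpha}=0$, but these are presentational refinements rather than a different proof.
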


\begin{proof}
Firstly, we prove that all the eigenvalues of the matrix $H$ are negative. From Lemma \ref{2.9} we obtain
\begin{equation*}
  \Lambda_i= \sum\limits_{j=0,j\neq i}^{M}\!\!\!\!h_{i,j}<-h_{i,i}.
\end{equation*}
According to the Gerschgorin theorem \cite{Isaacson:66}, the eigenvalues of the matrix $H$ are in the disks centered at $h_{i,i}$, with radius
$ \Lambda_i$, i.e.,  the eigenvalues $\lambda$ of the matrix $H$ satisfy
\begin{equation*}
  |\lambda -h_{i,i} | \leq \Lambda_i.
\end{equation*}
In addition, the matrix $H$ is symmetric, then all the eigenvalues of the matrix $H$ are negative. Using Lemma \ref{lemma2.6}, it follows that
$H=A^{\alpha,\lambda}+(A^{\alpha,\lambda})^T$ is negative definite in $ \mathbb{C}^{n \times n}$.
According to Lemmas \ref{lemma2.7} and \ref{lemma2.8}, we know that the matrixes $A^{\alpha,\lambda}$ and $(A^{\alpha,\lambda})^T$ are negative definite in $ \mathbb{R}^{n \times n}$.
\end{proof}

\subsection{Derivation of the numerical schemes}
Take the mesh points $x_i=a+ih,i=0,1,\ldots ,M$, and $t_n=n\tau,n=0,1,\ldots ,{N}$, where
 $h=(b-a)/M$, $\tau=T/N$ are  the uniform space stepsize and time steplength, respectively.
Denote $G_{i,\rho}^n$ as the numerical approximation to $G(x_i,\rho,t_n)$.

From \cite{Chen:13,Deng:14},  we know that the following equation holds
\begin{equation}\label{2.20}
\begin{split}
{^s_c}{D}_t^\gamma G(x,\rho,t)
&={^s\!}D_t^\gamma [G(x,\rho,t)-e^{J\rho U(x) t}G(x,\rho,0)],~~~~J=\sqrt{-1}.
\end{split}
\end{equation}
And the Riemann-Liouville fractional substantial derivative has the $\nu$-th order approximations, i.e.,
\begin{equation}\label{2.21}
\begin{split}
&{^s\!}D_t^\gamma G(x,\rho,t)|_{(x_i,t_n)}=\frac{1}{\tau^\gamma}\sum_{k=0}^{n}{d}_{i,k}^{\nu,\gamma}G(x_i,\rho,t_{n-k})+\mathcal{O}(\tau^\nu), ~~\nu=1,2,3,4;\\
& {^s\!}D_t^\gamma [e^{J\rho U(x) t}G(x,\rho,0)]_{(x_i,t_n)}=\frac{1}{\tau^\gamma}\sum_{k=0}^{n}d_{i,k}^{\nu,\gamma}e^{J\rho U_i (n-k)\tau}G(x_i,\rho,0)+\mathcal{O}(\tau^\nu)
\end{split}
\end{equation}
with
\begin{equation}\label{2.22}
d_{i,k}^{\nu,\gamma}=e^{J\rho U_i k \tau}{l}_k^{\nu,\gamma},~~U_i=U(x_i),~~\nu=1,2,3,4,
\end{equation}
where ${l}_k^{1,\gamma}$, ${l}_k^{2,\gamma}$, ${l}_k^{3,\gamma}$, ${l}_k^{4,\gamma}$, and ${l}_k^{5,\gamma}$
are defined by (2.2),  (2.4), (2.6), (2.8), and (2.10) in \cite{Chen:1313}, respectively.
In particular, when $\nu=1$, there exists
\begin{equation}\label{2.23}
  d_{i,k}^{1,\gamma}=e^{J\rho U_i k \tau}g_k^\gamma,~~ g_k^\gamma=(-1)^k\left ( \begin{matrix} \gamma \\ k\end{matrix} \right ).
\end{equation}

Combining (\ref{2.9}), (\ref{2.14}) and (\ref{1.7}), we obtain the approximation operator of the Riesz tempered fractional derivative

\begin{equation}\label{2.24}
\begin{split}
 \nabla_x^{\alpha,\lambda} G(x,\rho, t)\Big|_{(x=x_i,t=t_n)}
&=-\kappa_{\alpha}\left( _{a}\nabla_x^{\alpha,\lambda}+ {_x}\nabla_{b}^{\alpha,\lambda} \right)G(x,\rho,t)\Big|_{(x=x_i,t=t_n)}\\
%&=-\frac{\kappa_{\alpha}}{h^\alpha} \left[\sum_{j=0}^{i+1}e^{-(j-1)\lambda h}\omega_j^{\alpha}G(x_{i-j+1},\rho,t_n)
%  +\sum_{j=0}^{M-i+1}e^{-(j-1)\lambda h}\omega_j^{\alpha}G(x_{i+j-1},\rho,t_n)  \right] +\mathcal{O}(h)^2\\
%&=-\frac{\kappa_{\alpha}}{h^\alpha} \left[\sum_{j=0}^{i+1}e^{-(i-j)\lambda h}\omega_{i-j+1}^{\alpha}G(x_{j},\rho,t_n)
%  +\sum_{j=i-1}^{M}e^{-(j-i)\lambda h}\omega_{j-i+1}^{\alpha}G(x_{j},\rho,t_n)  \right] +\mathcal{O}(h)^2\\
&=-\frac{\kappa_{\alpha}}{h^\alpha} \sum_{j=0}^{M}\omega_{i,j}^{\alpha}G(x_{j},\rho,t_n)   +\mathcal{O}(h^2).
\end{split}
\end{equation}
Here
\begin{equation}\label{2.25}
\omega_{i,j}^{\alpha}=\left\{ \begin{array}
 {l@{\quad } l}
  e^{-(i-j)\lambda h}\omega_{i-j+1}^{\alpha},&j < i-1,\\
 e^{\lambda h} \omega_{0}^{\alpha}+e^{-\lambda h}\omega_{2}^{\alpha} ,&j=i-1,\\
 2\omega_{1}^{\alpha},&j=i,\\
 e^{\lambda h}\omega_{0}^{\alpha}+e^{-\lambda h}\omega_{2}^{\alpha} ,&j=i+1,\\
 e^{-(j-i)\lambda h}\omega_{j-i+1}^{\alpha} ,&j>i+1\\
 \end{array}
 \right.
\end{equation}
with $i=1,\ldots,M-1$, together with the Dirichlet boundary conditions that define $G(x_{0},\rho,t_n)$ and $G(x_{M},\rho,t_n)$ appropriately; and $\omega_j^{\alpha}$  is given in (\ref{2.11}) with the parameters $r_3$ satisfying (\ref{2.16}) and $r_1$ and $r_2$ specified in (\ref{2.6}).

From (\ref{2.20}), (\ref{2.21}), and (\ref{2.24}),   we can write (\ref{1.1}) as
\begin{equation}\label{2.26}
\begin{array}{ll}
&\displaystyle \frac{1}{\tau^\gamma}\sum_{k=0}^{n}{d}_{i,k}^{v,\gamma}G(x_i,\rho,t_{n-k})
  -\frac{1}{\tau^\gamma}\sum_{k=0}^{n}d_{i,k}^{v,\gamma}e^{J\rho U_i (n-k)\tau}G(x_i,\rho,0)\\
&\displaystyle =-\frac{K\kappa_\alpha}{h^\alpha}
\sum_{j=0}^{M}\omega_{i,j}^{\alpha}G(x_{j},\rho,t_n)+ r_i^{n}
\end{array}
\end{equation}
with the local truncation error
\begin{equation}\label{2.27}
  |r_i^n| \leq C_G(\tau^\nu+h^2),~~\nu=1,2,3,4,
\end{equation}
where $C_G$ is a constant independent of $\tau$ and $h$.

Multiplying (\ref{2.26}) by $\tau^\gamma$ leads to
\begin{equation}\label{2.28}
\begin{array}{l}
\displaystyle\sum_{k=0}^{n}{d}_{i,k}^{\nu,\gamma}G(x_i,\rho,t_{n-k})
-\sum_{k=0}^{n}d_{i,k}^{\nu,\gamma}e^{J\rho U_i (n-k)\tau}G(x_i,\rho,0)\\
\displaystyle=\kappa\sum_{j=0}^{M}\omega_{i,j}^{\alpha}G(x_{j},\rho,t_{n})+ R_i^{n},
\end{array}
\end{equation}
where
\begin{equation}\label{2.29}
\kappa=-\frac{K\kappa_\alpha\tau^\gamma}{h^\alpha}>0,~~\alpha\in(1,2),
\end{equation}
and
\begin{equation}\label{2.30}
|R_i^n|=|\tau^{\gamma}r_i^n| \leq C_G \tau^{\gamma}(\tau^\nu+h^2),~~\nu=1,2,3,4.
\end{equation}
From (\ref{2.28}), the resulting discretization of (\ref{1.1}) can be rewritten as
\begin{equation}\label{2.31}
\begin{split}
& d_{i,0}^{\nu,\gamma}G_{i,\rho}^n-\kappa\sum_{j=0}^{M}\omega_{i,j}^{\alpha} G_{j,\rho}^n =\sum_{k=0}^{n-1}d_{i,k}^{\nu,\gamma}e^{J\rho U_i  (n-k)\tau}G_{i,\rho}^{0}-\sum_{k=1}^{n-1}d_{i,k}^{\nu,\gamma}G_{i,\rho}^{n-k},~~n\geq 1.
\end{split}
\end{equation}
It is worthwhile noting that the second term
on the right hand side of (\ref{2.31})  automatically vanishes when $n =1$.

For convenience of implementation, we use the matrix form of the grid function
 \begin{equation*}
 \widetilde{G}^{n}=[G_{1,\rho}^n,G_{2,\rho}^n,\ldots,G_{M-1,\rho}^n]^{\rm T};
  \end{equation*}
and the finite difference scheme (\ref{2.31}) can be recast as
 \begin{equation}\label{2.32}
\begin{split}
&\left(d_{i,0}^{\nu,\gamma} I - \kappa H\right) \widetilde{G}^{n}=\sum_{k=0}^{n-1}g_k^\gamma e^{J\rho U_i  n\tau}\widetilde{G}^{0}
-\sum_{k=1}^{n-1}g_k^\gamma e^{J\rho U_i k \tau}\widetilde{G}^{n-k},
\end{split}
\end{equation}
where the matrix $H$ is defined by (\ref{2.18}) and $I$ is the identity matrix. It should be noticed that $H$ has the Toeplitz structure and then the computation cost is $\mathcal{O}(M\log M)$ in numerically solving the equation by the iteration methods, e.g., multigrid method.

In particular, when $\nu=1$, from (\ref{2.31}) and (\ref{2.23}), we obtain the scheme
\begin{equation}\label{2.33}
\begin{split}
& G_{i,\rho}^n-\kappa\sum_{j=0}^{M}\omega_{i,j}^{\alpha} G_{j,\rho}^n
 =\sum_{k=0}^{n-1}g_k^\gamma e^{J\rho U_i  n\tau}G_{i,\rho}^{0}-\sum_{k=1}^{n-1}g_k^\gamma e^{J\rho U_i k \tau}G_{i,\rho}^{n-k},~~n\geq 1;
\end{split}
\end{equation}
and it can be rewritten as
\begin{equation}\label{2.34}
\begin{split}
&(1-\kappa \omega_{i,i}^{\alpha} )G_{i,\rho}^1-\kappa\!\!\sum_{j=0,j\neq i}^{M}\omega_{i,j}^{\alpha} G_{j,\rho}^1= e^{J\rho U_i  \tau}G_{i,\rho}^{0},~~n=1;\\
&(1-\kappa \omega_{i,i}^{\alpha} ) G_{i,\rho}^n-\kappa\!\!\sum_{j=0,j\neq i}^{M}\omega_{i,j}^{\alpha} G_{j,\rho}^n\\
& =\sum_{k=0}^{n-1}g_k^\gamma e^{J\rho U_i  n\tau}G_{i,\rho}^{0}
   -\sum_{k=1}^{n-1}g_k^\gamma e^{J\rho U_i k \tau}G_{i,\rho}^{n-k},~~n> 1.
\end{split}
\end{equation}

\subsection{Detailed proof of the numerical stability and convergence}
In this subsection, we theoretically prove that the provided first order time discretization scheme (\ref{2.34}) is unconditionally stable and convergent. Denote $v^n=\{v_i^n\,|\, 0 \leq i \leq M, n \geq 0,\,v_0=v_{M}=0\}$,
which is a grid function.  And we define the pointwise maximum norm as
$||v^n||_\infty=\max\limits_{0\leq i \leq M}|v_i^n|$
and the discrete $L^2$ norm as
$||v^n||=\sqrt{h\sum\limits_{i=1}^{M-1}v_i^2}.$
\begin{lemma}\cite{Chen:09,Deng:14}\label{lemma2.12}
The coefficients $g_k^\gamma$ defined in (\ref{2.23}) with $\gamma \in (0,1)$ satisfy
\begin{equation*}
\begin{split}
 & g^\gamma_0=1; ~~~~ g^\gamma_k<0,~~ (k\geq 1); ~~~~\sum_{k=0}^{n-1}g^\gamma_k>0;  ~~~~\sum_{k=0}^{\infty}g^\gamma_k=0;\\
 {\rm and}&~~~~ \frac{1}{n^\gamma \Gamma(1-\gamma)}< \sum_{k=0}^{n-1}g^\gamma_k=-\sum_{k=n}^{\infty}g^\gamma_k\leq \frac{1}{n^\gamma} ~~{\rm for}~~n\geq 1.
  \end{split}
\end{equation*}
\end{lemma}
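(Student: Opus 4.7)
The whole lemma falls out of one closed-form expression for the partial sums, plus elementary sign counting and one concavity inequality.

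\emph{Step 1 (closed form for the partial sums).} I would start from the binomial series
\[
(1-z)^\gamma = \sum_{k=0}^\infty g_k^\gamma z^k,\qquad |z|<1,
\]
and divide by $1-z$ to obtain $(1-z)^{\gamma-1}=\sum_{k=0}^\infty g_k^{\gamma-1}z^k$. Matching this against the Cauchy-product identity $f(z)/(1-z)=\sum_{k\ge 0}\bigl(\sum_{j=0}^{k}a_j\bigr)z^k$ with $f=(1-z)^\gamma$ gives
\[
S_n\;:=\;\sum_{j=0}^{n-1} g_j^\gamma \;=\; g_{n-1}^{\gamma-1}\;=\;\frac{\Gamma(n-\gamma)}{\Gamma(1-\gamma)\,\Gamma(n)}.
\]
Every remaining claim will be read off this identity and its one-step recursion $S_{n+1}/S_n=(n-\gamma)/n$, with $S_1=1$.

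\emph{Step 2 (signs and summation to zero).} That $g_0^\gamma=1$ is immediate. For $k\ge 1$ I would expand $g_k^\gamma=(-1)^k\gamma(\gamma-1)\cdots(\gamma-k+1)/k!$: exactly one of the $k$ numerator factors is positive and the remaining $k-1$ are negative (since $\gamma\in(0,1)$), so the sign is $(-1)^{2k-1}=-1$, giving $g_k^\gamma<0$. A parallel count applied to $g_{n-1}^{\gamma-1}$ yields $S_n>0$. Setting $z=1$ in the binomial series gives $\sum_{k=0}^\infty g_k^\gamma=0$, and the identity $S_n=-\sum_{k=n}^\infty g_k^\gamma$ is then a rearrangement.

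\emph{Step 3 (the two-sided asymptotic bound).} I would prove the upper bound $S_n\le 1/n^\gamma$ by induction on $n$, using the recursion of Step~1. The inductive step reduces to verifying
\[
(1-\gamma/n)(1+1/n)^\gamma \le 1,
\]
which follows from concavity of $x\mapsto x^\gamma$ on $(0,\infty)$: the tangent-line inequality $(1+1/n)^\gamma\le 1+\gamma/n$ yields $(1-\gamma/n)(1+\gamma/n)=1-\gamma^2/n^2<1$. For the lower bound I would introduce the scaled quantity $T_n:=n^\gamma\,\Gamma(1-\gamma)\,S_n = n^\gamma\,\Gamma(n-\gamma)/\Gamma(n)$; the same Bernoulli-type inequality shows $T_{n+1}/T_n=(1+1/n)^\gamma(1-\gamma/n)\le 1-\gamma^2/n^2<1$, so $T_n$ is strictly decreasing. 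Since Stirling forces $\Gamma(n-\gamma)/\Gamma(n)\sim n^{-\gamma}$ as $n\to\infty$, one has $T_n\to 1$, and the strict monotonicity then gives $T_n>1$ for every $n\ge 1$, which is exactly $S_n>1/(n^\gamma\Gamma(1-\gamma))$.

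\emph{Anticipated difficulty.} No single step is genuinely hard once the closed form in Step~1 is available. The one point that deserves care is that the upper and lower bounds must both be driven by the \emph{same} concavity estimate $(1+1/n)^\gamma\le 1+\gamma/n$; getting the strict inequality in the lower bound additionally requires invoking the limit $T_n\to 1$ via Stirling, which is the only nonelementary ingredient I expect to need.
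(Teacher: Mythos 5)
Your argument is correct. Note first that the paper itself gives no proof of Lemma \ref{lemma2.12}: it is quoted from \cite{Chen:09,Deng:14}, so there is no in-paper argument to compare against; your write-up is a genuinely self-contained replacement. The closed form $S_n=\sum_{j=0}^{n-1}g_j^\gamma=(-1)^{n-1}\binom{\gamma-1}{n-1}=\Gamma(n-\gamma)/(\Gamma(1-\gamma)\Gamma(n))$, obtained by multiplying the generating function $(1-z)^\gamma$ by $(1-z)^{-1}$, is exactly the right lever: it delivers the positivity of the partial sums, the one-step recursion $S_{n+1}/S_n=(n-\gamma)/n$, and both two-sided bounds from the single inequality $(1+1/n)^\gamma\le 1+\gamma/n$, which is tidier than the usual route in the cited literature (where the bounds are typically extracted from the asymptotics of $\binom{\gamma}{k}$ term by term). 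Two small points to tighten. First, "setting $z=1$ in the binomial series" to get $\sum_{k=0}^\infty g_k^\gamma=0$ needs a word of justification (Abel's theorem, or convergence at $z=1$ for $\gamma>0$); but in fact you do not need it at all, since your own closed form gives $S_n=\Gamma(n-\gamma)/(\Gamma(1-\gamma)\Gamma(n))\to 0$ directly, which is the statement $\sum_{k=0}^\infty g_k^\gamma=0$. Second, in the lower bound you should make explicit that $1-\gamma/n>0$ for $n\ge 1$, so that multiplying the Bernoulli estimate through preserves the inequality; with that noted, the monotone-decreasing-to-$1$ argument for $T_n$ is airtight and correctly yields the strict inequality $T_n>1$, i.e.\ $S_n>1/(n^\gamma\Gamma(1-\gamma))$.
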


\begin{lemma}\label{lemma2.13}
The coefficients $\omega_{i,j}^{\alpha}$ defined in (\ref{2.25}) with $\alpha\in (1,2)$ satisfy
\begin{equation*}
  \begin{split}
&(1) ~~ \omega_{i,i}^{\alpha}<0,\,\,\,\,\,\, \omega_{i,j}^{\alpha} > 0\,\,(j\neq i);\\
&(2) ~~ \sum\limits_{j=0}^{M}\omega_{i,j}^{\alpha}< 0 ~~ \mbox {and} ~~  -\omega_{i,i}^{\alpha}>\!\!\!\!\!
         \sum\limits_{j=0,j\neq i}^{M}\!\!\!\!\omega_{i,j}^{\alpha}.
  \end{split}
\end{equation*}
\end{lemma}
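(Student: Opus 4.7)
The plan is to prove Lemma \ref{lemma2.13} essentially as a bookkeeping corollary of Lemmas \ref{lemma2.9} and \ref{lemma2.10}, by first observing that the coefficients $\omega_{i,j}^{\alpha}$ from (\ref{2.25}) are exactly the entries $h_{i,j}$ of the symmetric matrix $H$ in (\ref{2.18}). Once this identification is in place, the signs in part (1) and the row-sum estimates in part (2) transfer directly.

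First, I would verify the identification case by case against (\ref{2.18})--(\ref{2.19}): the diagonal satisfies $\omega_{i,i}^{\alpha}=2\omega_1^{\alpha}=2\phi_1^{\alpha}$; the immediate off-diagonals satisfy $\omega_{i,i\pm1}^{\alpha}=e^{\lambda h}\omega_0^{\alpha}+e^{-\lambda h}\omega_2^{\alpha}=\phi_0^{\alpha}+\phi_2^{\alpha}$; and for $|i-j|=k\geq 2$ the formula in (\ref{2.25}) gives $\omega_{i,j}^{\alpha}=e^{-k\lambda h}\omega_{k+1}^{\alpha}=\phi_{k+1}^{\alpha}$. With this equivalence, part (1) is immediate: Lemma \ref{lemma2.9} yields $\phi_1^{\alpha}<0$, which gives $\omega_{i,i}^{\alpha}<0$, while $\phi_0^{\alpha}+\phi_2^{\alpha}\geq 0$ and $\phi_j^{\alpha}>0$ for $j\geq 3$ together with $e^{-(j-1)\lambda h}>0$ give $\omega_{i,j}^{\alpha}>0$ for $j\neq i$.

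For part (2), the strict diagonal-dominance inequality $-\omega_{i,i}^{\alpha}>\sum_{j=0,j\neq i}^{M}\omega_{i,j}^{\alpha}$ is literally the second assertion of Lemma \ref{lemma2.10} applied to $H$. Rearranging immediately gives $\sum_{j=0}^{M}\omega_{i,j}^{\alpha}<0$. The underlying mechanism, already used in the proof of Lemma \ref{lemma2.10}, is that the full row sum of the extended (infinite) symmetric matrix $A^{\alpha,\lambda}+(A^{\alpha,\lambda})^{T}$ vanishes thanks to the identity $\sum_{j=0}^{\infty}e^{-j\lambda h}g_j^{\alpha}=(1-e^{-\lambda h})^{\alpha}$ from the generating function (\ref{2.1}); since by part (1) all entries $\omega_{i,j}^{\alpha}$ with $j\notin\{0,\ldots,M\}$ are nonnegative, truncating the sum to $j\in\{0,\ldots,M\}$ can only strictly decrease it, which is what yields $\sum_{j=0}^{M}\omega_{i,j}^{\alpha}<0$.

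I do not anticipate any serious obstacle here: the lemma is a reformulation of earlier matrix-level results in the pointwise/coefficient language needed later for the energy estimate. The only bit requiring a moment of care is ensuring that the boundary columns $j=0$ and $j=M$ (which lie outside the interior block indexed by Lemma \ref{lemma2.10}) contribute nonnegatively to $\sum_{j\neq i}\omega_{i,j}^{\alpha}$, so that the strict diagonal-dominance obtained for the interior block propagates to the full sum in (\ref{2.25}); this is secured by the sign analysis of $\omega_k^{\alpha}$ already carried out in Lemma \ref{lemma2.9}.
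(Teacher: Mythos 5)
Your proposal is correct and follows essentially the same route as the paper, which proves Lemma \ref{lemma2.13} by directly identifying the coefficients $\omega_{i,j}^{\alpha}$ of (\ref{2.25}) with the entries $h_{i,j}$ of $H$ in (\ref{2.18})--(\ref{2.19}) and then invoking Lemmas \ref{lemma2.9} and \ref{lemma2.10}; you have simply written out the bookkeeping that the paper leaves implicit. The only caveat (inherited from the paper itself, not a defect of your argument) is that Lemma \ref{lemma2.9} guarantees only $\phi_0^{\alpha}+\phi_2^{\alpha}\geq 0$ for the adjacent entries, so the strict inequality $\omega_{i,i\pm 1}^{\alpha}>0$ is really a nonstrict one at the endpoints of the admissible range of $r_3$, which is harmless for the diagonal-dominance conclusion in part (2).
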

\begin{proof}
From Lemmas \ref{lemma2.9} and \ref{lemma2.10}, the desired results are easily obtained.
\end{proof}

\begin{lemma}\cite{Deng:14}\label{lemma2.14}
Let $R \geq 0$; $\varepsilon^k \geq 0$,~$k=0,1,\ldots,N$ and satisfy
\begin{equation*}
  \varepsilon^n\leq  - \sum_{k=1}^{n-1} g_k^\gamma  \varepsilon^{n-k}+R,~~n\geq 1.
\end{equation*}
Then we have the following estimates:

(a) when $0<\gamma<1$,
$
\varepsilon^n \leq   \left(\sum\limits_{k=0}^{n-1}g_k^\gamma\right)^{-1}R \leq  n^\gamma \Gamma(1-\gamma)R;
$

(b) when $\gamma \rightarrow1$,
$
\varepsilon^n \leq     nR.
$
\end{lemma}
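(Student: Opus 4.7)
The plan is to prove part (a) by strong induction on $n$, using the key properties from Lemma~\ref{lemma2.12}: namely $g_0^\gamma=1$, $g_k^\gamma<0$ for $k\geq 1$, and the partial sums $S_n:=\sum_{k=0}^{n-1}g_k^\gamma$ are strictly positive. Set $C_n := S_n^{-1}$ for brevity. Because each new term added to $S_n$ is negative and $S_1=1$, the sequence $S_n$ is strictly decreasing with $0<S_n\leq 1$, so $C_n$ is a nondecreasing sequence with $C_n\geq 1$ for every $n\geq 1$. The desired bound can then be stated as $\varepsilon^n\leq C_n R$.

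The base case $n=1$ is immediate: the sum $\sum_{k=1}^{0}$ is empty, so $\varepsilon^1\leq R = C_1 R$. For the inductive step, assume $\varepsilon^m\leq C_m R$ for all $m<n$. Rewriting the hypothesis with the sign of $g_k^\gamma$ made explicit, and using $C_{n-k}\leq C_{n-1}$ for $1\leq k\leq n-1$ (monotonicity of $C_m$), I obtain
\begin{equation*}
\varepsilon^n \leq \sum_{k=1}^{n-1}(-g_k^\gamma)\,\varepsilon^{n-k} + R
\;\leq\; C_{n-1}R\sum_{k=1}^{n-1}(-g_k^\gamma) + R.
\end{equation*}
The telescoping identity $\sum_{k=1}^{n-1}(-g_k^\gamma)=g_0^\gamma-S_n=1-C_n^{-1}$ turns this into $\varepsilon^n\leq\bigl(C_{n-1}(1-C_n^{-1})+1\bigr)R$. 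Thus the induction closes provided one verifies the elementary inequality
\begin{equation*}
C_{n-1}\bigl(1-C_n^{-1}\bigr)+1 \leq C_n,
\end{equation*}
which, after multiplying through by $C_n$ and rearranging, is equivalent to $(C_n-C_{n-1})(C_n-1)\geq 0$; both factors are nonnegative by the monotonicity and lower bound on $C_n$ noted above. The second inequality in (a) is then simply the bound $S_n>1/(n^\gamma\Gamma(1-\gamma))$ from Lemma~\ref{lemma2.12}, which immediately gives $C_n<n^\gamma\Gamma(1-\gamma)$.

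For part (b), in the limit $\gamma\to 1$ the coefficients collapse to $g_0^\gamma=1$, $g_1^\gamma=-1$, and $g_k^\gamma=0$ for $k\geq 2$, so the recurrence degenerates to $\varepsilon^1\leq R$ and $\varepsilon^n\leq\varepsilon^{n-1}+R$ for $n\geq 2$; a trivial induction then yields $\varepsilon^n\leq nR$. I expect the main obstacle in the whole argument to be isolating the algebraic inequality $(C_n-C_{n-1})(C_n-1)\geq 0$ that makes the induction close; everything else is bookkeeping with the sign and telescoping structure of $\{g_k^\gamma\}$, which Lemma~\ref{lemma2.12} has already packaged for us.
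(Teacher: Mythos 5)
Your argument is correct, and it is worth noting that the paper itself gives no proof of this lemma (it is quoted from the cited reference), so your induction is a genuine self-contained verification. Every step checks out: the base case, the telescoping identity $\sum_{k=1}^{n-1}(-g_k^\gamma)=1-S_n$, the reduction of the inductive step to $(C_n-C_{n-1})(C_n-1)\geq 0$, and the limiting case $\gamma\to 1$, where you rightly return to the degenerate recursion $g_0^1=1$, $g_1^1=-1$, $g_k^1=0$ ($k\geq 2$) rather than trying to pass to the limit in the bound of part (a) (which would fail, since $\Gamma(1-\gamma)\to\infty$). The one stylistic remark: the detour through $C_{n-1}$ and the auxiliary inequality $(C_n-C_{n-1})(C_n-1)\geq 0$ is avoidable. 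In the inductive step you may bound $\varepsilon^{n-k}\leq C_{n-k}R\leq C_nR$ directly (monotonicity gives this for all $1\leq k\leq n-1$), whereupon
\begin{equation*}
\varepsilon^n\;\leq\; C_nR\sum_{k=1}^{n-1}(-g_k^\gamma)+R\;=\;C_nR\bigl(1-C_n^{-1}\bigr)+R\;=\;C_nR,
\end{equation*}
and the induction closes with no further algebra. This is the form of the argument that appears in the standard references for discretized fractional calculus; your version proves a marginally sharper intermediate estimate at the cost of one extra elementary inequality, and both are valid.
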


\begin{theorem}\label{theorem2.15}
The difference scheme (\ref{2.33})  is unconditionally stable.
\end{theorem}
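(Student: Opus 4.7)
The plan is to argue stability in the discrete maximum norm by tracking how perturbations of the initial data propagate through the scheme \eqref{2.34}. Since the equation is linear and every coefficient that multiplies a time-history value on the right-hand side has modulus one (because $|e^{J\rho U_i k\tau}|=1$), it suffices to analyse a perturbation $\varepsilon^n_i$ whose only source is $\varepsilon^0$, and to derive a recursion of Gr\"onwall type for $\|\varepsilon^n\|_\infty$.

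First I would fix an index $i^\star$ at which $|\varepsilon^n_{i^\star}|=\|\varepsilon^n\|_\infty$. Plugging the error equation at $(i^\star,n)$, taking moduli, and using the triangle inequality together with $|e^{J\rho U_{i^\star}k\tau}|=1$ gives
\begin{equation*}
(1-\kappa\omega_{i^\star,i^\star}^{\alpha})\|\varepsilon^n\|_\infty
\le \kappa\!\!\!\sum_{j=0,j\neq i^\star}^{M}\!\!\!\omega_{i^\star,j}^{\alpha}\,\|\varepsilon^n\|_\infty
+\sum_{k=0}^{n-1}g_k^{\gamma}\,\|\varepsilon^0\|_\infty
-\sum_{k=1}^{n-1}g_k^{\gamma}\,\|\varepsilon^{n-k}\|_\infty,
\end{equation*}
where I have bounded $|\varepsilon^n_j|\le\|\varepsilon^n\|_\infty$ on the spatial sum, used $g_0^\gamma=1>0$, $g_k^\gamma<0$ for $k\ge1$ from Lemma \ref{lemma2.12}, and used $\omega_{i^\star,i^\star}^\alpha<0$, $\omega_{i^\star,j}^\alpha>0$ ($j\neq i^\star$) from Lemma \ref{lemma2.13}. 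Moving the spatial sum to the left, the diagonal–dominance part of Lemma \ref{lemma2.13} gives $1-\kappa\omega_{i^\star,i^\star}^\alpha-\kappa\sum_{j\neq i^\star}\omega_{i^\star,j}^\alpha\ge 1$, so the factor on the left drops out and I obtain
\begin{equation*}
\|\varepsilon^n\|_\infty
\le \sum_{k=0}^{n-1}g_k^{\gamma}\,\|\varepsilon^0\|_\infty
-\sum_{k=1}^{n-1}g_k^{\gamma}\,\|\varepsilon^{n-k}\|_\infty,\qquad n\ge 1.
\end{equation*}

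From here the conclusion $\|\varepsilon^n\|_\infty\le\|\varepsilon^0\|_\infty$ for every $n\ge 1$ follows by strong induction. The base case $n=1$ reduces to $\|\varepsilon^1\|_\infty\le g_0^\gamma\|\varepsilon^0\|_\infty=\|\varepsilon^0\|_\infty$. For the inductive step, substituting the hypothesis $\|\varepsilon^{n-k}\|_\infty\le\|\varepsilon^0\|_\infty$ and using $-g_k^\gamma>0$ for $k\ge1$ gives
\begin{equation*}
\|\varepsilon^n\|_\infty \le \Bigl(\sum_{k=0}^{n-1}g_k^\gamma+\sum_{k=1}^{n-1}(-g_k^\gamma)\Bigr)\|\varepsilon^0\|_\infty
= g_0^\gamma\,\|\varepsilon^0\|_\infty = \|\varepsilon^0\|_\infty,
\end{equation*}
which closes the induction. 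Alternatively, one can obtain the same bound by appealing directly to Lemma \ref{lemma2.14} with $R=\sum_{k=0}^{n-1}g_k^\gamma\,\|\varepsilon^0\|_\infty$, since the prefactor $\bigl(\sum_{k=0}^{n-1}g_k^\gamma\bigr)^{-1}$ cancels against $R$. Unconditional stability then follows because neither $\tau$ nor $h$ enters any constraint.

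The only delicate point is that the analysis is genuinely complex: $G^n_{i,\rho}$, the exponentials $e^{J\rho U_i k\tau}$, and in principle the linear combinations themselves are complex-valued, so standard real-valued maximum-principle arguments do not apply verbatim. The crucial observation—that $|e^{J\rho U_i k\tau}|=1$ and that the spatial matrix $H$ has real, strictly diagonally dominant coefficients with the correct sign pattern guaranteed by Lemma \ref{lemma2.13}—lets the complex modulus play exactly the role of the real absolute value in the classical proof, and this is the step I would flag as the main obstacle to getting right.
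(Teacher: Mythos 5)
Your proof is correct and follows essentially the same route as the paper: isolate the maximizing index, use the sign and diagonal-dominance properties of $\omega_{i,j}^{\alpha}$ from Lemma \ref{lemma2.13}, the unit modulus of $e^{J\rho U_i k\tau}$, and the properties of $g_k^{\gamma}$ from Lemma \ref{lemma2.12} to arrive at the recursion (\ref{2.40}), then close by induction. The only cosmetic difference is that you move the off-diagonal spatial terms to the right and divide by $1-\kappa\sum_{j}\omega_{i^\star,j}^{\alpha}\ge 1$ where the paper uses a reverse-triangle-inequality chain; your aside about invoking Lemma \ref{lemma2.14} with an $n$-dependent $R$ is unnecessary (and not literally covered by that lemma's hypotheses), but the direct induction you give stands on its own.
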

\begin{proof}
Let ${\widetilde G_{i,\rho}}^n$ be the approximate solution of $G_{i,\rho}^n$,
which is the exact solution of the scheme (\ref{2.33}). Taking $\varepsilon_i^n={\widetilde G_{i,\rho}}^n- G_{i,\rho}^n$,
then from (\ref{2.34}) we get the following perturbation equation
\begin{equation}\label{2.35}
\begin{split}
&(1-\kappa \omega_{i,i}^{\alpha} )\varepsilon_i^1  -    \kappa\!\!\sum_{j=0,j\neq i}^{M}\omega_{i,j}^{\alpha} \varepsilon_j^1
   = e^{J\rho U_i  \tau}\varepsilon_i^{0},~~n=1,\\
&(1-\kappa \omega_{i,i}^{\alpha} ) \varepsilon_i^n-\kappa\!\!\!\!\sum_{j=0,j\neq i}^{M}\!\!\!\!\omega_{i,j}^{\alpha} \varepsilon_j^n
 =\sum_{k=0}^{n-1}g_k^\gamma e^{J\rho U_i  n\tau}\varepsilon_i^{0}-\sum_{k=1}^{n-1}g_k^\gamma e^{J\rho U_i k \tau}\varepsilon_i^{n-k},~~n> 1.
\end{split}
\end{equation}

Denote $\varepsilon^n=[\varepsilon_0^n,\varepsilon_1^n,\ldots, \varepsilon_M^n]$
and $||\varepsilon^n||_{\infty}=\max \limits _{0\leq i \leq M}|\varepsilon_i^n|$.
Next we prove that $||\varepsilon^n||_{\infty} \leq ||\varepsilon^0||_{\infty}$ by the  mathematical induction.

For  $n=1$, suppose $|\varepsilon_{i_0}^1|=||\varepsilon^1||_{\infty}=\max \limits _{0\leq i \leq M}|\varepsilon_i^1|$.
From (\ref{2.35}), we obtain
\begin{equation}\label{2.36}
\begin{split}
(1-\kappa \omega_{i_0,i_0}^{\alpha} )\varepsilon_{i_0}^1-\kappa\!\!\!\!\sum_{j=0,j\neq {i_0}}^{M}\!\!\!\!\omega_{i_0,j}^{\alpha} \varepsilon_j^1
= e^{J\rho U_{i_0}  \tau}\varepsilon_{i_0}^{0},~~n=1,~~J=\sqrt{-1}.
\end{split}
\end{equation}
Then
\begin{equation*}
\begin{split}
||\varepsilon^1||_{\infty}
&=|\varepsilon_{i_0}^1| \leq |\varepsilon_{i_0}^1|-\kappa\sum_{j=0}^{M}\omega_{i_0,j}^{\alpha} |\varepsilon_{i_0}^1|
        =(1-\kappa\omega_{i_0,i_0}^\alpha )|\varepsilon_{i_0}^1|-\kappa\!\!\!\sum_{j=0,j\neq {i_0}}^{M}\!\!\!\omega_{i_0,j}^\alpha |\varepsilon_{i_0}^1|\\
&\leq (1-\kappa\omega_{i_0,i_0}^\alpha )|\varepsilon_{i_0}^1|-\kappa\!\!\!\sum_{j=0,j\neq {i_0}}^{M}\!\!\!\omega_{i_0,j}^\alpha |\varepsilon_{j}^1|
\leq \left|(1-\kappa\omega_{i_0,i_0}^\alpha )\varepsilon_{i_0}^1-\kappa\!\!\!\sum_{j=0,j\neq {i_0}}^{M}\!\!\!\omega_{i_0,j}^\alpha \varepsilon_{j}^1 \right|\\
&=|e^{J\rho U_{i_0}  \tau}\varepsilon_{i_0}^{0}|= |\varepsilon_{i_0}^{0}| \leq ||\varepsilon^0||_{\infty}.
\end{split}
\end{equation*}
Supposing $|\varepsilon_{i_0}^n|=||\varepsilon^n||_{\infty}=\max \limits _{0\leq i \leq M}|\varepsilon_i^n|$,
from Lemma \ref{lemma2.13} and (\ref{2.35}), we obtain
\begin{equation}\label{2.37}
\begin{split}
||\varepsilon^n||_{\infty}
&\leq \left|(1-\kappa\omega_{i_0,i_0}^\alpha )\varepsilon_{i_0}^n-\kappa\!\!\!\sum_{j=0,j\neq {i_0}}^{M}\!\!\!\omega_{i_0,j}^\alpha \varepsilon_{j}^n \right|\\
&=\left |\sum_{k=0}^{n-1}g_k^\gamma e^{J\rho U_{i_0}  n\tau}\varepsilon_{i_0}^{0}
      -\sum_{k=1}^{n-1}g_k^\gamma e^{J\rho U_{i_0} k \tau}\varepsilon_{i_0}^{n-k}\right|\\
&\leq \left |\sum_{k=0}^{n-1}g_k^\gamma e^{J\rho U_{i_0}  n\tau}\varepsilon_{i_0}^{0}\right|
      +\left |\sum_{k=1}^{n-1}g_k^\gamma e^{J\rho U_{i_0} k \tau}\varepsilon_{i_0}^{n-k}\right|.
\end{split}
\end{equation}
Using Lemma \ref{lemma2.12}, we get
\begin{equation}\label{2.38}
\begin{split}
\left |\sum_{k=0}^{n-1}g_k^\gamma e^{J\rho U_{i_0}  n\tau}\varepsilon_{i_0}^{0}\right|
%& =\left |\varepsilon_{i_0}^{0}\right| \left |\sum_{k=0}^{n-1}g_k^\gamma e^{J\rho U_{i_0}  n\tau}\right| \\
& =\left |\varepsilon_{i_0}^{0}\right|\cdot \left |e^{J\rho U_{i_0}  n\tau}\right| \cdot \left |\sum_{k=0}^{n-1}g_k^\gamma \right|
=\sum_{k=0}^{n-1}g_k^\gamma ||\varepsilon^0||_{\infty},
\end{split}
\end{equation}
and
\begin{equation}\label{2.39}
\begin{split}
\left |\sum_{k=1}^{n-1}g_k^\gamma e^{J\rho U_{i_0} k \tau}\varepsilon_{i_0}^{n-k}\right|
%&\leq \sum_{k=1}^{n-1} \left | g_k^\gamma e^{J\rho U_{i_0} k \tau}\right| \left |\varepsilon_{i_0}^{n-k}\right|\\
&\leq \sum_{k=1}^{n-1} \left | g_k^\gamma\right|\cdot ||\varepsilon^{n-k}||_{\infty}
=-\sum_{k=1}^{n-1}  g_k^\gamma ||\varepsilon^{n-k}||_{\infty}.
\end{split}
\end{equation}
Thus, according to (\ref{2.37})-(\ref{2.39}), there exists
\begin{equation}\label{2.40}
\begin{split}
||\varepsilon^n||_{\infty}
\leq \sum_{k=0}^{n-1}  g^\gamma_k ||\varepsilon^0||_{\infty}-\sum_{k=1}^{n-1}g^\gamma_k||\varepsilon^{n-k}||_{\infty}.
\end{split}
\end{equation}
Next we prove the following inequality holds by the mathematical induction
$$||\varepsilon^n||_{\infty} \leq ||\varepsilon^0||_{\infty}, ~~\forall n \geq 1.$$
In fact, for $n=1$, Eq. (\ref{2.40}) holds obviously. Suppose that
$$||\varepsilon^s||_{\infty} \leq ||\varepsilon^0||_{\infty}, ~~s=1,2,\ldots,n-1.$$
Then from (\ref{2.40}), it implies that
$$
||\varepsilon^n||_{\infty}
\leq \sum_{k=0}^{n-1}  g^\gamma_k ||\varepsilon^0||_{\infty}
 -\sum_{k=1}^{n-1}g^\gamma_k||\varepsilon^{n-k}||_{\infty}
\leq \sum_{k=0}^{n-1}  g^\gamma_k ||\varepsilon^0||_{\infty}
 -\sum_{k=1}^{n-1}g^\gamma_k||\varepsilon^{0}||_{\infty}=||\varepsilon^{0}||_{\infty}.
$$
Thus, the proof is completed.
\end{proof}

\begin{theorem}\label{theorem2.16}
Let $G(x_i,\rho,t_n)$  be the exact solution of (\ref{2.28}), and $G_{i,\rho}^n$ the solution of the  finite difference
scheme (\ref{2.33}).  Then the error estimates are
$$
||G(x_i,\rho,t_n)-G_{i,\rho}^n||_\infty \leq  C_G\Gamma(1-\gamma) T^\gamma (\tau+h^2),~~{\rm for}~~0<\gamma <1;
$$
and
$$
||G(x_i,\rho,t_n)-G_{i,\rho}^n||_\infty \leq  C_GT  \tau^{\gamma-1}(\tau+h^2),~~{\rm for}~~\gamma \rightarrow 1,
$$
where $C_G$ is defined by  (\ref{2.27}), $i=0,1,\ldots,M;~ n=1,2,\ldots,N$.
\end{theorem}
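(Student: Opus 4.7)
The plan is to derive an error equation that has the same structural form as the perturbation equation (\ref{2.35}) used in the stability proof, but now carries the truncation error $R_i^n$ from (\ref{2.30}) as a forcing term. Setting $e_i^n:=G(x_i,\rho,t_n)-G_{i,\rho}^n$, I subtract the scheme (\ref{2.33}) from the consistency identity (\ref{2.28}). Because the homogeneous boundary and exact initial data force $e_0^n=e_M^n=0$ and $e_i^0=0$, all the terms containing $G(x_i,\rho,0)$ or $G_{i,\rho}^0$ cancel, and using $d_{i,0}^{1,\gamma}=g_0^\gamma=1$ together with (\ref{2.23}) I obtain, for $n\ge 1$,
\begin{equation*}
(1-\kappa\omega_{i,i}^\alpha)\,e_i^n-\kappa\!\!\sum_{j=0,j\neq i}^{M}\!\!\omega_{i,j}^\alpha\,e_j^n
=-\sum_{k=1}^{n-1}g_k^\gamma\,e^{J\rho U_i k\tau}\,e_i^{n-k}+R_i^n.
\end{equation*}

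Next I would run the same maximum-norm argument used at (\ref{2.36})--(\ref{2.40}) in the proof of Theorem \ref{theorem2.15}. Selecting $i_0$ with $|e_{i_0}^n|=\|e^n\|_\infty$ and using Lemma \ref{lemma2.13} to dominate the left-hand side from below by $\|e^n\|_\infty$, the modulus bound $|e^{J\rho U_{i_0}k\tau}|=1$, and the sign information $g_k^\gamma<0$ for $k\ge 1$ from Lemma \ref{lemma2.12}, I arrive at
\begin{equation*}
\|e^n\|_\infty\;\le\;-\sum_{k=1}^{n-1}g_k^\gamma\,\|e^{n-k}\|_\infty+\max_{1\le i\le M-1}|R_i^n|,\qquad n\ge 1,
\end{equation*}
which is exactly the hypothesis of Lemma \ref{lemma2.14} with $R=C_G\tau^\gamma(\tau+h^2)$ by the truncation estimate (\ref{2.30}).

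Finally, I would apply Lemma \ref{lemma2.14}. Part (a) gives $\|e^n\|_\infty\le n^\gamma\Gamma(1-\gamma)\,C_G\tau^\gamma(\tau+h^2)$ for $0<\gamma<1$; rewriting $n^\gamma\tau^\gamma=t_n^\gamma\le T^\gamma$ produces the first estimate. Part (b) gives $\|e^n\|_\infty\le n\,C_G\tau^\gamma(\tau+h^2)$ as $\gamma\to 1$, and rewriting $n\tau=t_n\le T$ together with the remaining $\tau^{\gamma-1}$ yields the second estimate. The main obstacle is essentially bookkeeping: I must check that the complex exponential factors $e^{J\rho U_i k\tau}$ carried along from Definition \ref{definition1.2} and (\ref{2.22}) never break the real-valued sign structure of $g_k^\gamma$ and $\omega_{i,j}^\alpha$ that the stability proof relied on; since these factors have unit modulus, passing to absolute values in the triangle-inequality step preserves the inequality and the whole pipeline goes through. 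The other delicate point is confirming that the factor $\tau^\gamma$ hidden in (\ref{2.30}) (coming from the multiplication of (\ref{2.26}) by $\tau^\gamma$) is precisely what absorbs the $n^\gamma$ produced by Lemma \ref{lemma2.14}, so that the final bound scales with $T^\gamma$ rather than blowing up as $\tau\to 0$.
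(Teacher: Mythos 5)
Your proposal is correct and follows essentially the same route as the paper's own proof: the same error equation obtained by subtracting (\ref{2.33}) from (\ref{2.28}), the same maximum-norm argument borrowed from the stability proof via Lemma \ref{lemma2.13}, and the same closing application of Lemma \ref{lemma2.14} with $n^\gamma\tau^\gamma\le T^\gamma$ (resp. $n\tau\le T$). No gaps.
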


\begin{proof}
Let $G(x_i,\rho,t_n)$ be the exact solution of (\ref{2.28}) at the mesh point $(x_i,t_n)$,
and $G_{i,\rho}^n$ the  solution of the  finite difference scheme (\ref{2.33}).
Denote $e_i^n=G(x_i,\rho,t_n)-G_{i,\rho}^n$ and  $e^n=[e_0^n,e_1^n,\ldots, e_{M}^n]^T$.
Subtracting (\ref{2.28}) from (\ref{2.33}) and using $e_i^0=0$, we obtain
\begin{equation}\label{2.41}
\begin{split}
&(1-\kappa \omega_{i,i}^{\alpha} )e_i^1  -    \kappa\!\!\sum_{j=0,j\neq i}^{M}\omega_{i,j}^{\alpha} e_j^1
   = R_i^1,~~n=1,\\
&(1-\kappa \omega_{i,i}^{\alpha} ) e_i^n-\kappa\!\!\sum_{j=0,j\neq i}^{M}\omega_{i,j}^{\alpha} e_j^n
 =-\sum_{k=1}^{n-1}g_k^\gamma e^{J\rho U_i k \tau}e_i^{n-k}+R_i^n,~~n> 1,
\end{split}
\end{equation}
where  $R_i^n $ is defined by (\ref{2.30}) with $\nu=1$.

Denoting that  $||e^n||_{\infty}=\max \limits _{0\leq i \leq M}|e_i^n|$
and $R_{\max}=\max \limits _{0\leq i \leq M,0\leq n \leq N}|R_i^n|$,  the desired result can be proved by using the mathematical induction.

For  $n=1$, supposing  $|e_{i_0}^1|=||e^1||_{\infty}=\max \limits _{0\leq i \leq M}|e_i^1|$
and using  (\ref{2.41}), we get
\begin{equation*}
\begin{split}
(1-\kappa \omega_{i_0,i_0}^{\alpha} )e_{i_0}^1-\kappa\!\!\sum_{j=0,j\neq {i_0}}^{M}\omega_{i_0,j}^{\alpha} e_j^1= R_{i_0}^1.
\end{split}
\end{equation*}
According to Lemma \ref{lemma2.13} and the above equation, we obtain
\begin{equation*}
\begin{split}
||e^1||_{\infty}
=|e_{i_0}^1|
% \leq |e_{i_0}^1|-\kappa\sum_{j=0}^{M}\omega_{i_0,j}^{\alpha} |e_{i_0}^1|
%        =(1-\kappa\omega_{i_0,i_0}^\alpha )|e_{i_0}^1|-\kappa\!\!\!\sum_{j=0,j\neq {i_0}}^{M}\!\!\!\omega_{i_0,j}^\alpha |e_{i_0}^1|\\
&\leq \left|(1-\kappa\omega_{i_0,i_0}^\alpha )e_{i_0}^1-\kappa\!\!\!\sum_{j=0,j\neq {i_0}}^{M}\!\!\!\omega_{i_0,j}^\alpha e_{j}^1 \right|
=|R_{i_0}^1|  \leq R_{\max}.
\end{split}
\end{equation*}
Supposing $|e_{i_0}^n|=||e^n||_{\infty}=\max \limits _{0\leq i \leq M}|e_i^n|$, and  from    (\ref{2.39}), (\ref{2.41}), Lemma \ref{lemma2.13}, there exists
\begin{equation*}
\begin{split}
||e^n||_{\infty}
&\leq \left|(1-\kappa\omega_{i_0,i_0}^\alpha )e_{i_0}^n-\kappa\!\!\!\sum_{j=0,j\neq {i_0}}^{M}\!\!\!\omega_{i_0,j}^\alpha e_{j}^n \right|\\
&=\left | -\sum_{k=1}^{n-1}g_k^\gamma e^{J\rho U_{i_0} k \tau}e_{i_0}^{n-k}+R_{i_0}^n\right|
       \leq \left |\sum_{k=1}^{n-1}g_k^\gamma e^{J\rho U_{i_0} k \tau}e_{i_0}^{n-k}\right|+R_{\max}\\
&\leq -\sum_{k=1}^{n-1}  g_k^\gamma ||e^{n-k}||_{\infty}+R_{\max},
\end{split}
\end{equation*}
i.e.,
$||e^n||_{\infty} \leq -\sum\limits_{k=1}^{n-1}g^\gamma_k||e^{n-k}||_{\infty} +R_{\max}.$

Hence, from Lemma \ref{lemma2.14},  we have the following estimates

(a) when $0<\gamma<1$,
\begin{equation*}
 ||e^n||_{\infty} \leq   \left(\sum_{k=0}^{n-1}g_k^\gamma\right)^{-1}R_{\max} \leq  n^\gamma \Gamma(1-\gamma)R_{\max}
 \leq  C_G\Gamma(1-\gamma) T^\gamma  (\tau+h^2);
\end{equation*}

(b) when $\gamma \rightarrow1$,
$||e^n||_{\infty} \leq     nR_{\max}\leq  C_GT  \tau^{\gamma-1}(\tau+h^2).$
\end{proof}

Besides the $L_\infty$ norm, the unconditional stability and convergence can also be obtained in the discrete $L^2$ norm.
In the following theorem, we present the convergence result in $L^2$ norm; because of the similarity,
we omit the proof of unconditional stability in $L^2$ norm.

\begin{theorem}\label{theorem2.17}
Let $G(x_i,\rho,t_n)$  be the exact solution of (\ref{2.28}), and $G_{i,\rho}^n$ the solution of the  finite difference
scheme (\ref{2.33}).  Then the error estimates are
$$
||G(x_i,\rho,t_n)-G_{i,\rho}^n|| \leq (b-a)^{\frac{1}{2}} C_G\Gamma(1-\gamma) T^\alpha (\tau+h^2),~~{\rm for}~~0<\gamma <1;
$$
and
$$
||G(x_i,\rho,t_n)-G_{i,\rho}^n|| \leq (b-a)^{\frac{1}{2}} C_GT  \tau^{\gamma-1}(\tau+h^2),~~{\rm for}~~\gamma \rightarrow 1,
$$
where $C_G$ is defined by  (\ref{2.27}), $i=0,1,\ldots,M;~ n=1,2,\ldots,N$.
\end{theorem}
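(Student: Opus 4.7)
The most direct route is to reduce the $L^2$ bound to the pointwise bound already proved in Theorem \ref{theorem2.16}. For any grid function $v^n$ vanishing at the endpoints, the two discrete norms are related by
\begin{equation*}
\|v^n\|^2 \,=\, h\sum_{i=1}^{M-1}(v_i^n)^2 \,\leq\, h(M-1)\|v^n\|_\infty^2 \,\leq\, (b-a)\|v^n\|_\infty^2,
\end{equation*}
so that $\|v^n\| \leq (b-a)^{1/2}\|v^n\|_\infty$. Plugging the error $e_i^n = G(x_i,\rho,t_n) - G_{i,\rho}^n$ into both cases of Theorem \ref{theorem2.16} then produces exactly the stated estimates, with $(b-a)^{1/2}$ appearing as the single new prefactor.

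An alternative that parallels the authors' induction in the $L^\infty$ setting is to argue directly with the Hermitian inner product. Writing (\ref{2.41}) in matrix form as $(I-\kappa H)e^n = -\sum_{k=1}^{n-1} g_k^\gamma D_k e^{n-k} + R^n$, where $D_k$ denotes the diagonal matrix with entries $e^{J\rho U_i k\tau}$ of modulus one, I would take the inner product of both sides with $e^n$. Because $\kappa > 0$ and Theorem \ref{theorem2.11} shows $H$ has negative definite symmetric part, the coercivity estimate $\|e^n\|^2 \leq \Re((I-\kappa H)e^n, e^n)$ holds. Cauchy--Schwarz on the right, combined with $\|D_k\|_2 = 1$ and the fact that $g_k^\gamma < 0$ for $k \geq 1$ from Lemma \ref{lemma2.12}, collapses the scheme to
\begin{equation*}
\|e^n\| \,\leq\, -\sum_{k=1}^{n-1} g_k^\gamma \|e^{n-k}\| + \|R^n\|,
\end{equation*}
with $\|R^n\| \leq (b-a)^{1/2} C_G \tau^\gamma (\tau + h^2)$ by (\ref{2.30}). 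Lemma \ref{lemma2.14} then closes the time induction in exactly the same way as in Theorem \ref{theorem2.16} and delivers both the $0<\gamma<1$ and $\gamma \to 1$ cases.

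Neither approach presents a real obstacle. The spatial ingredient (negative definiteness of the Riesz discretization) is already packaged in Theorem \ref{theorem2.11}, and the temporal ingredient (the discrete Gronwall-type Lemma \ref{lemma2.14}) is the same tool that drove the $L^\infty$ proof. The only thing to watch is the bookkeeping with the unit-modulus complex phases $e^{J\rho U_i k\tau}$, which do not inflate either norm; this is precisely why the extension from real to complex space is essentially free here.
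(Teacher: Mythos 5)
Your proposal is correct, and your second route is essentially the paper's own proof: the authors write the error equation in matrix form, take the inner product with $e^n$, use the negative definiteness of $H$ from Theorem \ref{theorem2.11} (so that $(-\kappa H e^n,e^n)\geq 0$), apply Cauchy--Schwarz to get $\|e^n\|\leq -\sum_{k=1}^{n-1}g_k^\gamma\|e^{n-k}\|+R_{\max}$ with $R_{\max}\leq (b-a)^{1/2}C_G\tau^\gamma(\tau+h^2)$, and close with Lemma \ref{lemma2.14}; your explicit handling of the unit-modulus phases and of the real part of the complex inner product is, if anything, slightly more careful than the paper's. Your first route is genuinely different and more economical: the elementary inequality $\|v^n\|^2=h\sum_{i=1}^{M-1}|v_i^n|^2\leq h(M-1)\|v^n\|_\infty^2\leq (b-a)\|v^n\|_\infty^2$ transfers Theorem \ref{theorem2.16} directly to the discrete $L^2$ norm with exactly the advertised prefactor $(b-a)^{1/2}$, bypassing Theorem \ref{theorem2.11} entirely. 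What the paper's (and your second) argument buys in exchange for the extra machinery is a self-contained $L^2$ energy framework that also yields the $L^2$ stability statement the authors allude to but omit; the norm-comparison shortcut only inherits the convergence estimate from the $L^\infty$ analysis. Note also that the exponent $T^\alpha$ in the theorem statement is a typo for $T^\gamma$, as both your derivations (and the paper's) confirm.
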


\begin{proof}
Let $G(x_i,\rho,t_n)$ be the exact solution of (\ref{2.28}) at the mesh point $(x_i,t_n)$,
and  $G_{i,\rho}^n$ the  solution of the  finite difference scheme (\ref{2.33}).
Denote $e_i^n=G(x_i,\rho,t_n)-G_{i,\rho}^n$ and  $e^n=[e_1^n,e_2^n,\ldots, e_{M-1}^n]^T$.
Subtracting (\ref{2.28}) from (\ref{2.33}) and using $e_i^0=0$, we obtain
$$
\left(I - \kappa H\right) e^{n}=-\sum_{k=1}^{n-1}g_k^\gamma e^{J\rho U_i k \tau}e^{n-k}+R^n,
$$
where $H$ is defined by (\ref{2.18}), and $R^n=[R_1^n,R_2^n,\ldots, R_{M-1}^n]^T$.

Performing the inner product in both sides of the above equation by $e^n$ leads to
\begin{equation*}
\left(\left(I - \kappa H\right) e^{n},e^n\right)=\left(-\sum_{k=1}^{n-1}g_k^\gamma e^{J\rho U_i k \tau}e^{n-k},e^n\right)+(R^n,e^n).
\end{equation*}
From Definition \ref{definition2.5} and Theorem \ref{theorem2.11}, we obtain $\left( - \kappa H e^{n},e^n\right) \geq 0$. Then
\begin{equation*}
||e^n||^2\leq \left(-\sum_{k=1}^{n-1}g_k^\gamma e^{J\rho U_i k \tau}e^{n-k},e^n\right)+(R^n,e^n);
\end{equation*}
it follows that
\begin{equation}\label{2.42}
||e^n|| \leq \Big|\Big|-\sum_{k=1}^{n-1}g_k^\gamma e^{J\rho U_i k \tau}e^{n-k}\Big|\Big|+||R^n||.
\end{equation}
Then
\begin{equation*}
\begin{split}
  ||e^n|| \leq -\sum_{k=1}^{n-1}g^\gamma_k||e^{n-k}|| +R_{\max},
\end{split}
\end{equation*}
where $R_{\max} \leq (b-a)^{\frac{1}{2}} C_G \tau^\gamma(\tau+h^2)$.

Hence, from Lemma \ref{lemma2.14},  we have the estimates

(a) when $0<\gamma<1$,
\begin{equation*}
 ||e^n|| \leq   \left(\sum_{k=0}^{n-1}g_k^\gamma\right)^{-1}R_{\max} \leq  n^\gamma \Gamma(1-\gamma)R_{\max}
 \leq (b-a)^{\frac{1}{2}} C_G\Gamma(1-\gamma) T^\gamma  (\tau+h^2);
\end{equation*}

(b) when $\gamma \rightarrow1$,
$||e^n|| \leq     nR_{\max}\leq (b-a)^{\frac{1}{2}} C_GT  \tau^{\gamma-1}(\tau+h^2).$
\end{proof}

\subsection{Numerical results}
%We numerically verify the above theoretical results including convergent
%orders and numerical stability.  And the $ l_\infty$ norm is used to measure the numerical errors.
We employ the V-cycle Multigrid method (MGM)   \cite{Chen:14,Pang:12} to solve
 (\ref{1.1}) with the algorithms given in \S 2.2; and the parameters of MGM, e.g., `Iter', `CPU', etc, are  the same as the ones of  \cite{Chen:14}. For the convenience to the readers, the MGM's pseudo codes are added in the Appendix.
All the numerical experiments are programmed in Python, and the computations are carried out on a PC with the configuration:
Intel(R) Core(TM) i5-3470 3.20 GHZ and 8 GB RAM and a 64 bit Windows 7 operating system.
Without loss of generality, we add a force term $f(x,\rho,t)$ on the right side of (\ref{1.1}).

%%%%%%%%%%%%%%%%%%%%%%%%%%%%%%%%%%%%%  Example 5.2  %%%%%%%%%%%%

%%%%%%%%%%%%%%%%%%%%%%%%%%  Example 5.3  %%%%%%%%%%%%%%%%%%%%%
\begin{example}\label{example12.1}\end{example}
Consider (\ref{1.1}) on a finite domain with $x \in (a,b)$ ($a=0$, $b=1$),  $0<t \leq 1$,  and the coefficient  $K=1$, $U(x)=x$, $\rho=1$,   $J=\sqrt{-1}$; the forcing function
\begin{equation*}
\begin{split}
f(x,\rho,t)=&\frac{\Gamma(4+\gamma)}{\Gamma(4)}e^{J\rho x t}t^3\sin(x^2)\sin((1-x)^2)\\
       &  +\frac{1}{2\cos(\alpha \pi/2)} (t^{3+\gamma}+1)e^{-\lambda x}{ _{a}}D_x^{\alpha}[e^{(\lambda+J\rho t) x}\sin(x^2)\sin((1-x)^2)]\\
       &  +\frac{1}{2\cos(\alpha \pi/2)} (t^{3+\gamma}+1)e^{\lambda x}{ _{x}}D_b^{\alpha}[e^{(-\lambda+J\rho t) x}\sin(x^2)\sin((1-x)^2)]\\
       &  -\frac{\lambda^\alpha}{\cos(\alpha \pi/2)} e^{J\rho xt}(t^{3+\gamma}+1)\sin(x^2)\sin((1-x)^2),\\
\end{split}
\end{equation*}
where the left and right fractional derivatives of the given functions are calculated by Algorithm \ref{AFD} presented in the Appendixes.
The initial condition $G(x,\rho,0)=\sin(x^2)\sin((1-x)^2) $, and the boundary
conditions $G(0,\rho,t)=G(1,\rho,t)=0$. Then (\ref{1.1}) has the exact
solution $G(x,\rho,t)=e^{J\rho xt}(t^{3+\gamma}+1)\sin(x^2)\sin((1-x)^2). $

\begin{table}[h]\fontsize{9.5pt}{12pt}\selectfont%生成浮动表格
  \begin{center}%\def\tabcolsep{28.5pt}%表格居中
  \caption{MGM   to solve the scheme (\ref{2.31}) with $\nu=2$ at $T=1$ and $N=M$,
 where  $\lambda=0.2$,  $U(x)=x$, $\rho=1$,  $r_3=0$. }\vspace{5pt}%标题，离表格一定的距离
 { \begin{tabular*}{\linewidth}{@{\extracolsep{\fill}}*{9}{c}}                                    \hline  %画顶端的横线
$M$ &  $\alpha=1.3,\gamma=0.8$  & Rate & Iter     & CPU  & $\alpha=1.8,\gamma=0.3$&   Rate   &  Iter   &  CPU    \\\hline
    $2^4$   &   1.3494e-003   &           &  7.0     &0.29 s       &  1.6256e-003    &          &  8.0    & 0.26 s\\\hline %画底端的横线
    $2^5$   &   3.3193e-004    & 2.02    &  6.0     & 0.83 s       &  3.9709e-004  &  2.03  &  8.0    & 0.92 s \\\hline %画底端的横线
    $2^6$   &   8.1137e-005  & 2.03    &  6.0     & 2.57 s       &  9.6906e-005   &  2.03  &  8.0    & 2.93 s \\\hline % 画底端的横线
    $2^7$   &   2.0198e-005  &2.01    &  6.0     & 9.03 s       &  2.3633e-005   &  2.04  &  7.0    & 10.01 s \\\hline % 画底端的横线
    \end{tabular*}}\label{table:1}%\vspace{-15pt}
  \end{center}
\end{table}

\begin{table}[h]\fontsize{9.5pt}{12pt}\selectfont%生成浮动表格
  \begin{center}%\def\tabcolsep{28.5pt}%表格居中
  \caption{MGM  to solve the scheme (\ref{2.31}) with $\nu=1$ at $T=1$ and $N=M$,
 where  $\lambda=0.7$,  $U(x)=x$, $\rho=1$,   $r_3= \frac{(\alpha-1)(2-\alpha)(\alpha+3)}{4(\alpha+1)(\alpha+2)}$. }\vspace{5pt}%标题，离表格一定的距离
 { \begin{tabular*}{\linewidth}{@{\extracolsep{\fill}}*{9}{c}}                                    \hline  %画顶端的横线
$M$ &  $\alpha=1.3,\gamma=0.8$  & Rate & Iter     & CPU  & $\alpha=1.8,\gamma=0.3 $&   Rate   &  Iter   &  CPU    \\\hline
    $2^4$   &   2.4702e-003   &           &  8.0     & 0.23 s       & 1.7526e-003   &          &  9.0    & 0.28 s\\\hline %画底端的横线
    $2^5$   &   1.2761e-003  & 0.95    &  7.0     & 0.72 s       &  5.1049e-004   &  1.79  &  9.0    & 0.82 s \\\hline % 画底端的横线
    $2^6$   &   6.4040e-004  &0.99   &  6.0     & 2.41 s       &   1.6173e-004   &  1.66 &  9.0    & 2.85 s \\\hline %画底端的横线
    $2^7$   &   3.2027e-004  & 1.00    &  6.0     & 8.68 s       &  5.7602e-005  &  1.49  &  10.0   &10.18 s \\\hline % 画底端的横线
    \end{tabular*}}\label{table:2}%\vspace{-15pt}
  \end{center}
\end{table}

Table \ref{table:1}  shows that the schemes (\ref{2.31}) with $\nu=2$ have the global truncation errors $\mathcal{O} (\tau^2+h^2)$  at time $T=1$, and numerically confirms that the computational cost is of $\mathcal{O}(M \mbox{log} M)$ operations. Similarly, Table \ref{table:2}  shows that the algorithms (\ref{2.31}) with $\nu=1$ have the global truncation errors $\mathcal{O} (\tau+h^2)$  at time $T=1$, and the computational cost also is of $\mathcal{O}(M \mbox{log} M)$ operations.

%%%%%%%%%%%%%%%%%%%%%%%%%%%%%%%%%%%%%
%%%%%%%%%%%%%%%%%%%%%%%%%%%%%%%%%%%%%

\section{High order schemes for (\ref{1.1}) with nonhomogeneous boundary and/or initial conditions}
Generally, when the high order finite difference discretizations are used to solve the fractional differential equations, the potential analytical solution and its several derivatives must be zero at the boundaries and/or initial time  for keeping the high accuracy. These requirements greatly limit the practical applications of the high order schemes to obtain high accuracy. This section provides the techniques to overcome the challenges, i.e., modifies the current high order schemes such that they can keep high accuracy without the above requirements.
\subsection{Modifications to the high order discretizations}
Let $\sigma$ be a constant or a function without related to $x$, say $\sigma(y)$, and denote
\begin{equation}\label{3.1}
\begin{split}
{_a^1}D_{x}^{q,\sigma}G(x) =\left( \frac{\partial}{\partial x} +\sigma \right)^q\!\!G(x)~~{\rm and }~~
{_x^1}D_{b}^{q,\sigma}G(x) =(-1)^q\left( \frac{\partial}{\partial x} -\sigma \right)^q\!\!G(x),
\end{split}
\end{equation}
where $q$ is a positive integer.

Then, from the  $\nu$-th order difference formula  \cite[p.\,83] {Gustafsson:08} and (\ref{3.1}), we obtain
\begin{equation}\label{3.2}
\begin{split}
&{_a^1}D_{x}^{q,\sigma}G(x)|_{x=x_0} =\frac{1}{h^q}\sum_{p=0}^{q+\nu-1}b_p^{q,\sigma h}G(x_p)+\mathcal{O}(h^\nu)
\end{split}
\end{equation}
and
\begin{equation}\label{3.3}
\begin{split}
&{_x^1}D_{b}^{q,\sigma}G(x)|_{x=x_M} =\frac{1}{h^q}\sum_{p=M-q-\nu+1}^{M}\overline{b}_p^{q,\sigma  h}G(x_p)+\mathcal{O}(h^\nu),
\end{split}
\end{equation}
where $x_0=a$, $x_M=b$, and $x_p=a+ph$. The coefficients $b_p^{q,\sigma  h}$ and $\overline{b}_p^{q,\sigma  h}$ are, respectively, given in Tables \ref{Table3.1} and \ref{Table3.2}.

\begin{table}[h]\fontsize{9.5pt}{12pt}\selectfont%生成浮动表格
  \begin{center}%\def\tabcolsep{28.5pt}%表格居中
  \caption{Coefficients $b_p^{q,\sigma  h}$  for approximations (\ref{3.2}) }\vspace{5pt}%标题，离表格一定的距离
 { \begin{tabular*}{\linewidth}{@{\extracolsep{\fill}}*{9}{c}}                                    \hline  %画顶端的横线
q &  $\nu$ & $x_0$             & $x_1$     & $x_2$  & $x_3 $   &$x_4 $  &  $x_5 $  \\\hline
                                &   1    &$-1+\sigma h$      &  1        &        &          &          \\
                                &   2    &$-\frac{3}{2}+\sigma h$    &  2        & $-\frac{1}{2}$ &          &          \\
1 &   3    &$-\frac{11}{6}+\sigma h$   &  3        & $-\frac{3}{2}$ & $\frac{1}{3}$    &          \\
                                &   4    &$-\frac{25}{12}+\sigma h$  &  4        & -3     &  $\frac{4}{3}$   & $-\frac{1}{4}$  \\\hline %画底端的横线
%%%%%%%%%%%%%%%%%%%%%%%%%%%%%%%%%%%%%%%%%%%%%%%%%%%%%%%%%%%%%%%%%%%%%%%%%%%%%%%%%%%%%%%%%%%%%%%%%%%%%%%%%%%%%%%%%%%%%%%%%%%%%
                                &   1    &$1-2\sigma h+\sigma^2h^2$      &  $-2+2\sigma h$       &  1      &          &          \\
                                &   2    &$2-3\sigma h+\sigma^2h^2$    &  $-5+4\sigma h$    &$4-\sigma h$ & -1         &          \\
2 &   3    &$\frac{35}{12}-\frac{11}{3}\sigma h+\sigma^2h^2$   &  $-\frac{26}{3}+6\sigma h$       &$\frac{19}{2}-3\sigma h$
 & $-\frac{14}{3}+\frac{2}{3}\sigma h$  &    $\frac{11}{12} $   \\
%%%%%    %%%%%%%%
 &   4    &$\frac{15}{4}-\frac{25}{6}\sigma h+\sigma^2h^2$  & $-\frac{77}{6}+8\sigma h$
 & $\frac{107}{6}-6\sigma h$     &  $-13+\frac{8}{3}\sigma h$   & $\frac{61}{12}-\frac{1}{2}\sigma h$ &$-\frac{5}{6}$  \\\hline % 画底端的横线
    \end{tabular*}}\label{Table3.1}%\vspace{-15pt}
  \end{center}
\end{table}
\begin{table}[h]\fontsize{9.5pt}{12pt}\selectfont%生成浮动表格
  \begin{center}%\def\tabcolsep{28.5pt}%表格居中
  \caption{Coefficients $\overline{b}_p^{q,\sigma h}$  for approximations (\ref{3.3}) }\vspace{5pt}%标题，离表格一定的距离
 { \begin{tabular*}{\linewidth}{@{\extracolsep{\fill}}*{9}{c}}                                    \hline  %画顶端的横线
q &  $\nu$ & $x_N$             & $x_{N-1}$     & $x_{N-2}$  & $x_{N-3} $   &$x_{N-4} $  &  $x_{N-5} $  \\\hline
                                &   1    &$-1+\sigma h$      &  1        &        &          &          \\
                                &   2    &$-\frac{3}{2}+\sigma h$    &  2        & $-\frac{1}{2}$ &          &          \\
1 &   3    &$-\frac{11}{6}+\sigma h$   &  3        & $-\frac{3}{2}$ & $\frac{1}{3}$    &          \\
                                &   4    &$-\frac{25}{12}+\sigma h$  &  4        & -3     &  $\frac{4}{3}$   & $-\frac{1}{4}$  \\\hline %画底端的横线
%%%%%%%%%%%%%%%%%%%%%%%%%%%%%%%%%%%%%%%%%%%%%%%%%%%%%%%%%%%%%%%%%%%%%%%%%%%%%%%%%%%%%%%%%%%%%%%%%%%%%%%%%%%%%%%%%%%%%%%%%%%%%
                                &   1    &$1-2\sigma h+\sigma^2h^2$      &  $-2+2\sigma h$       &  1      &          &          \\
                                &   2    &$2-3\sigma h+\sigma^2h^2$    &  $-5+4\sigma h$    &$4-\sigma h$ & -1         &          \\
2 &   3    &$\frac{35}{12}-\frac{11}{3}\sigma h+\sigma^2h^2$   &  $-\frac{26}{3}+6\sigma h$       &$\frac{19}{2}-3\sigma h$
 & $-\frac{14}{3}+\frac{2}{3}\sigma h$  &    $\frac{11}{12} $   \\
%%%%%    %%%%%%%%
 &   4    &$\frac{15}{4}-\frac{25}{6}\sigma h+\sigma^2h^2$  & $-\frac{77}{6}+8\sigma h$
 & $\frac{107}{6}-6\sigma h$     &  $-13+\frac{8}{3}\sigma h$   & $\frac{61}{12}-\frac{1}{2}\sigma h$ &$-\frac{5}{6}$  \\\hline % 画底端的横线
    \end{tabular*}}\label{Table3.2}%\vspace{-15pt}
  \end{center}
\end{table}

For the Caputo fractional substantial derivative, we can rewrite it as
\begin{equation} \label{3.4}
\begin{split}
 {^s_c}{D}_t^\gamma G(x,\rho,t)
&={^s\!}D_t^\gamma [G(x,\rho,t)-e^{J\rho U(x) t}G(x,\rho,0)]\\
&={^s\!}D_t^\gamma \widetilde{G}(x,\rho,t)+ \sum_{q=1}^{m_1}  \frac{e^{J\rho U(x) t}t^{q-\gamma}}{\Gamma(q+1-\gamma)} \left({^s\!} D_t^{q}G(x,\rho,t)|_{t=0}\right),
\end{split}
\end{equation}
where $\widetilde{G}(x,\rho,t)=G(x,\rho,t)-e^{J\rho U(x) t}G(x,\rho,0)
  - \sum\limits_{q=1}^{m_1}\frac{t^{q} e^{J\rho U(x) t} }{\Gamma(q+1)} \left({^s\!} D_t^{q}G(x,\rho,t)|_{t=0}\right) $, $J=\sqrt{-1}$, and $m_1 \geq \nu-2$. Obviously, we have made sure that $\widetilde{G}(x,\rho,t)$ and its several derivatives w.r.t. $t$  at $t=0$ are zero; so the high order discretizations in time keep their high accuracy \cite{Chen:13}. This is the basic idea to obtain the high order approximations for the fractional derivative with nonzero initial conditions. The same idea will be used to treat the nonhomogeneous boundary conditions later.

It can be noted that two terms of the right hand side of (\ref{3.4}) automatically vanish when $m_1 \leq 0$.
And from (\ref{3.2}), we have
\begin{equation*}
\begin{split}
{^s\!} D_t^{q}G(x,\rho,t)|_{t=0}
&=\left(\frac{\partial}{\partial t}-J\rho  U(x)\right)^q G(x,\rho,t)|_{t=0} \\
& =\frac{1}{\tau^q}\!\sum_{p=0}^{q+\nu-1}b_p^{q,-J\rho U(x) \tau}G(x,\rho,t_p)+\mathcal{O}(\tau^\nu)\\
&=\frac{1}{\tau^q}\!\sum_{p=0}^{q+\nu-1}b_p^{q,-J\rho U_i \tau}G_{i,\rho}^p+\mathcal{O}(\tau^\nu).
\end{split}
\end{equation*}
Then
\begin{equation*}
\begin{split}
&{^s_c}D_t^\gamma G(x,\rho,t)|_{(x_i,t_n)}\\
&=\frac{1}{\tau^\gamma}\!\sum_{k=0}^{n}{d}_{i,k}^{\nu,\gamma}\! \left[G_{i,\rho}^{n-k}-e^{J\rho U_i t_{n-k}}G_{i,\rho}^{0}
  - \sum_{q=1}^{m_1}\frac{t_{n-k}^{q} e^{J\rho U_i t_{n-k}} }{\Gamma(q+1)}  \left(\frac{1}{\tau^q}\!\sum_{p=0}^{q+\nu-1}b_p^{q,-J\rho U_i \tau}G_{i,\rho}^p\right)  \right]\\
&\quad + \sum_{q=1}^{m_1}  \frac{e^{J\rho U_i t_n}t_n^{q-\gamma}}{\Gamma(q+1-\gamma)}  \left(\frac{1}{\tau^q}\!\sum_{p=0}^{q+\nu-1}b_p^{q,-J\rho U_i \tau}G_{i,\rho}^p\right)
+\mathcal{O}(\tau^\nu), ~~\nu=1,2,3,4.
\end{split}
\end{equation*}
Thus, the resulting discretization of (\ref{1.1}) can be rewritten as
\begin{equation}\label{3.5}
\begin{split}
& d_{i,0}^{\nu,\gamma}G_{i,\rho}^n-\kappa\sum_{j=0}^{M}\omega_{i,j}^{\alpha} G_{j,\rho}^n \\
&=\sum_{k=0}^{n-1}{d}_{i,k}^{\nu,\gamma}\! \left[e^{J\rho U_i t_{n-k}}G_{i,\rho}^{0}
 +  \sum_{q=1}^{m_1}\frac{t_{n-k}^{q} e^{J\rho U_i t_{n-k}} }{\Gamma(q+1)}  \left(\frac{1}{\tau^q}\!\sum_{p=0}^{q+\nu-1}b_p^{q,-J\rho U_i \tau}G_{i,\rho}^p\right)  \right]\\
&\quad -\sum_{k=1}^{n-1}d_{i,k}^{\nu,\gamma}G_{i,\rho}^{n-k}
  - \tau^\gamma  \sum_{q=1}^{m_1}  \frac{e^{J\rho U_i t_n}t_n^{q-\gamma}}{\Gamma(q+1-\gamma)}  \left(\frac{1}{\tau^q}\!\sum_{p=0}^{q+\nu-1}
  b_p^{q,-J\rho U_i \tau}G_{i,\rho}^p\right).
\end{split}
\end{equation}
There are two ways to solve the above equation: 1. use the idea of method of lines, but first we need to apply (\ref{2.31}) to obtain the starting values $G_{i,\rho}^p,\,p=1,\cdots,q+\nu-1$; 2. solve $G_{i,\rho}^p,\,p=1,\cdots,q+\nu-1$ in one times by inversing a big matrix.

%the values of $\widetilde{G}$ and its several derivatives are zero at the point $a$.
For the first term of  left Riemann-Liouville tempered fractional derivative (corresponding to (\ref{1.6})), it can be rewritten as
\begin{equation}\label{3.6}
 _{a}^1D_x^{\alpha,\lambda}G(x)={_{a}^1}D_x^{\alpha,\lambda} \widetilde{G}(x)
  + \sum_{q=0}^{m_2}  \frac{e^{-\lambda (x-a)}(x-a)^{q-\alpha}}{\Gamma(q+1-\alpha)} \left({_a^1}D_{x}^{q,\lambda}G(x)|_{x=a} \right) ,
\end{equation}
where $m_2\geq \nu-1$ and $\widetilde{G}(x)=G(x)
  - \sum\limits_{q=0}^{m_2}\frac{(x-a)^{q} e^{-\lambda (x-a)} }{\Gamma(q+1)} \left({_a^1}D_{x}^{q,\lambda}G(x)|_{x=a} \right)$. It is clear that $\widetilde{G}(x)$ and its several derivatives are equal to zero at the left boundaries. So its high order discretizations can keep their high accuracy \cite{Chen:1313,Chen:0013}. In fact, the various high order discretizations, say, being derived from the so-called WSLD operators \cite{Chen:1313,Chen:0013}, can be uniformly written as
\begin{equation}\label{3.7}
 \begin{split}
_{a}^1D_{x}^{\alpha,\lambda} \widetilde{G}(x_i)
&=\frac{1}{h^{\alpha}}\sum_{j=0}^{i+1}l_j^\alpha \widetilde{G}(x_{i-j+1})+\mathcal{O}(h^\nu).
 \end{split}
\end{equation}
From (\ref{3.2}), (\ref{3.6}),  and (\ref{3.7}), we obtain
\begin{equation}\label{3.8}
 \begin{split}
_{a}^1D_{x}^{\alpha,\lambda}G(x_i)&=  \frac{1}{h^{\alpha}}\sum_{j=0}^{i+1}l_j^{\alpha}\widetilde{G}(x_{i-j+1})
\\
& \quad + \sum_{q=0}^{m_2}  \frac{e^{-\lambda (x_i-a)}(x_i-a)^{q-\alpha}}{\Gamma(q+1-\alpha)}  \left( \frac{1}{h^q}\sum_{p=0}^{q+\nu-1}b_p^{q,\lambda h}G(x_p)\right)+\mathcal{O}(h^\nu)\\
&=\frac{1}{h^{\alpha}}\sum_{j=0}^{i+1}{_L}\widetilde{l}_j^\alpha G(x_{i-j+1})+\mathcal{O}(h^\nu),~~i=1,2,\ldots M-1,
 \end{split}
\end{equation}
where $\widetilde{G}(x_{i-j+1}) = G(x_{i-j+1}) - \sum\limits_{q=0}^{m_2}\frac{(x_{i-j+1}-a)^{q} e^{-\lambda (x_{i-j+1}-a)} }{\Gamma(q+1)} \left( \frac{1}{h^q}\sum\limits_{p=0}^{q+\nu-1}b_p^{q,\lambda h}G(x_p)\right)$.

 When $\nu=2$, $m_2=1$, from (\ref{3.8}) we have
\begin{equation}\label{3.9}
\begin{split}
{_L}\widetilde{l}_{i+1}^\alpha
&=l_{i+1}^\alpha  -\sum\limits_{j=0}^{i+1}l_j^\alpha \left\{\frac{e^{-\lambda (x_{i-j+1}-a)}}{h}
\left[h+\left(\lambda h-\frac{3}{2}\right) (x_{i-j+1}-a)\right]  \right\}  \\
&\quad+h^{\alpha-1} \frac{e^{-\lambda (x_i-a)} (x_i-a)^{-\alpha}}{\Gamma(2-\alpha)} \left[(1-\alpha)h+\left(\lambda h-\frac{3}{2}\right) (x_i-a)\right] ;  \\
{_L}\widetilde{l}_{i}^\alpha
&=l_{i}^\alpha  -\sum\limits_{j=0}^{i+1}l_j^\alpha \left(\frac{e^{-\lambda (x_{i-j+1}-a)}}{h}
2 (x_{i-j+1}-a)  \right) +h^{\alpha-1} \frac{e^{-\lambda (x_i-a)} (x_i-a)^{-\alpha}}{\Gamma(2-\alpha)} 2 (x_i-a) ;  \\
{_L}\widetilde{l}_{i-1}^\alpha
&=l_{i-1}^\alpha  -\sum\limits_{j=0}^{i+1}l_j^\alpha \left[\frac{e^{-\lambda (x_{i-j+1}-a)}}{h}
\left(-\frac{1}{2} (x_{i-j+1}-a) \right)  \right] \\
&\quad +h^{\alpha-1} \frac{e^{-\lambda (x_i-a)} (x_i-a)^{-\alpha}}{\Gamma(2-\alpha)} \left(-\frac{1}{2} (x_i-a) \right) ;   \\
{_L}\widetilde{l}_{j}^\alpha
&=l_{j}^\alpha, ~~j \neq  i-1, i, i+1.
\end{split}
\end{equation}

%%%%%%%%%%%%%%%%%%%%
For the first term of  right Riemann-Liouville tempered fractional derivative (corresponding to (\ref{1.6})), it can be written as
\begin{equation}\label{3.10}
\begin{split}
 _{x}^1D_b^{\alpha,\lambda}G(x)
 ={_{x}^1}D_b^{\alpha,\lambda}\bar{G}(x)+ \sum_{q=0}^{m_2}  \frac{e^{-\lambda (b-x)}(b-x)^{q-\alpha}}{\Gamma(q+1-\alpha)} \left({_x^1}D_{b}^{q,\lambda}G(x)|_{x=b} \right) ,
\end{split}
\end{equation}
where $m_2\geq \nu-1$ and $\bar{G}(x)=G(x)
  - \sum\limits_{q=0}^{m_2}\frac{(b-x)^{q} e^{-\lambda (b-x)} }{\Gamma(q+1)}  \left({_x^1}D_{b}^{q,\lambda}G(x)|_{x=b} \right)$. It is obvious that $\bar{G}(x)$ and its several derivatives are equal to zero at the right boundaries. So its high order discretizations can keep their high accuracy \cite{Chen:1313,Chen:0013}. The general discretizations of $\bar{G}(x)$ can be written as
\begin{equation}\label{3.11}
 \begin{split}
_{x}^1D_{b}^{\alpha,\lambda} \bar{G}(x_i)
&=\frac{1}{h^{\alpha}}\sum_{j=0}^{M-i+1}l_j^\alpha \bar{G}(x_{i+j-1})+\mathcal{O}(h^\nu).
 \end{split}
\end{equation}
From (\ref{3.3}), (\ref{3.10}), and (\ref{3.11}), there exists
\begin{equation}\label{3.12}
 \begin{split}
&_{x}^1D_{b}^{\alpha,\lambda}G(x_i)\\
&=  \frac{1}{h^{\alpha}}\!\!\sum_{j=0}^{M-i+1}\!\!l_j^{\alpha}
\Bigg[  G(x_{i+j-1})  \\
&\quad - \sum_{q=0}^{m_2}\frac{(b-x_{i-j+1})^{q} e^{-\lambda (b-x_{i-j+1})} }{\Gamma(q+1)}
\left(\frac{1}{h^q}\sum_{p=x_N-q-\nu+1}^{x_N}\overline{b}_p^{q,\lambda h}G(x_p)\right)    \Bigg]\\
& \quad+ \sum_{q=0}^{m_2}  \frac{e^{-\lambda (b-x_i)}(b-x_i)^{q-\alpha}}{\Gamma(q+1-\alpha)}
\left(\frac{1}{h^q}\sum_{p=x_N-q-\nu+1}^{x_N}\overline{b}_p^{q,\lambda h}G(x_p)\right)  +\mathcal{O}(h^\nu)\\
&=\frac{1}{h^{\alpha}}\sum_{j=0}^{M-i+1}{_R}\widetilde{l}_j^\alpha G(x_{i+j-1})+\mathcal{O}(h^\nu),~~i=1,2,\ldots M-1.
 \end{split}
\end{equation}
Hence, for $\nu=2$, $m_2=1$, from (\ref{3.12}) we have
\begin{equation}\label{3.13}
\begin{split}
{_R}\widetilde{l}_{M-i+1}^\alpha
&=l_{M-i+1}^\alpha  -\sum\limits_{j=0}^{M-i+1}l_j^\alpha \left\{\frac{e^{-\lambda (b-x_{i+j-1})}}{h}
\left[h+\left(\lambda h-\frac{3}{2}\right) (b-x_{i+j-1})\right]  \right\} \\
&\quad+h^{\alpha-1} \frac{e^{-\lambda (b-x_i)} (b-x_i)^{-\alpha}}{\Gamma(2-\alpha)} \left[(1-\alpha)h+\left(\lambda h-\frac{3}{2}\right) (b-x_i)\right] ;  \\
{_R}\widetilde{l}_{M-i}^\alpha
&=l_{M-i}^\alpha  -\sum\limits_{j=0}^{M-i+1}l_j^\alpha \left(\frac{e^{-\lambda (b-x_{i+j-1})}}{h}2 (b-x_{i+j-1})  \right)\\
&\quad +h^{\alpha-1} \frac{e^{-\lambda (b-x_i)} (b-x_i)^{-\alpha}}{\Gamma(2-\alpha)} 2 (b-x_i) ;  \\
{_R}\widetilde{l}_{M-i-1}^\alpha
&=l_{M-i-1}^\alpha  -\sum\limits_{j=0}^{M-i+1}l_j^\alpha \left[\frac{e^{-\lambda (b-x_{i+j-1})}}{h}
\left(-\frac{1}{2} (b-x_{i+j-1}) \right)  \right] \\
&\quad +h^{\alpha-1} \frac{e^{-\lambda (b-x_i)} (b-x_i)^{-\alpha}}{\Gamma(2-\alpha)} \left(-\frac{1}{2} (b-x_i) \right) ;   \\
{_R}\widetilde{l}_{j}^\alpha
&=l_{j}^\alpha, ~~j \neq  M-i-1, M-i, M-i+1.
\end{split}
\end{equation}
Therefore, we obtain the general high order scheme of (\ref{1.1}) with nonhomogeneous boundary and/or initial conditions:
\begin{equation}\label{3.14}
\begin{split}
& d_{i,0}^{\nu,\gamma}G_{i,\rho}^n-\kappa\sum_{j=0}^{M}l_{i,j}^{\alpha} G_{j,\rho}^n \\
&=\sum_{k=0}^{n-1}{d}_{i,k}^{\nu,\gamma}\! \left[e^{J\rho U_i t_{n-k}}G_{i,\rho}^{0}
 +  \sum_{q=1}^{m_1}\frac{t_{n-k}^{q} e^{J\rho U_i t_{n-k}} }{\Gamma(q+1)} \left(\frac{1}{\tau^q}\!\sum_{p=0}^{q+\nu-1}b_p^{q,-J\rho U_i \tau}G_{i,\rho}^p\right)   \right]\\
&\quad -\sum_{k=1}^{n-1}d_{i,k}^{\nu,\gamma}G_{i,\rho}^{n-k}
 - \tau^\gamma  \sum_{q=1}^{m_1}  \frac{e^{J\rho U_i t_n}t_n^{q-\gamma}}{\Gamma(q+1-\gamma)} \left(\frac{1}{\tau^q}\!\sum_{p=0}^{q+\nu-1}b_p^{q,-J\rho U_i \tau}G_{i,\rho}^p\right).
\end{split}
\end{equation}
In the numerical computations of next subsection, we take
\begin{equation*}
\begin{split}
&l_{0}^\alpha=r_1g_{0}^\alpha; ~~~~
l_{1}^\alpha=r_1g_{1}^\alpha+r_2g_{0}^\alpha;~~~~l_{j}^\alpha=r_1g_{j}^\alpha+r_2g_{j-1}^\alpha+r_3g_{j-2}^\alpha, ~~\forall j\geq 2;
\end{split}
\end{equation*}
and from  (\ref{3.9}) and (\ref{3.13}), there exists
\begin{equation*}
l_{i,j}^{\alpha}=\left\{ \begin{array}
 {l@{\quad } l}
  e^{-(i-j)\lambda h}{_L}\widetilde{l}_{i-j+1}^{\alpha},&j < i-1,\\
 e^{\lambda h} {_R}\widetilde{l}_{0}^{\alpha}+e^{-\lambda h}{_L}\widetilde{l}_{2}^{\alpha} ,&j=i-1,\\
{_L}\widetilde{l}_{1}^{\alpha}+ {_R}\widetilde{l}_{1}^{\alpha}-2\left(r_1e^{\lambda h}+r_2+r_3e^{-\lambda h}\right)\left(1-e^{-\lambda h}\right)^{\alpha},&j=i,\\
 e^{\lambda h}{_L}\widetilde{l}_{0}^{\alpha}+e^{-\lambda h}{_R}\widetilde{l}_{2}^{\alpha} ,&j=i+1,\\
 e^{-(j-i)\lambda h}{_R}\widetilde{l}_{j-i+1}^{\alpha} ,&j>i+1\\
 \end{array}
 \right.
\end{equation*}
with $i=1,\ldots,M-1$. The numerical results will show that (\ref{3.14}) truly has high convergence orders for (\ref{1.1}) with {\em nonhomogeneous} boundary and/or initial conditions.
\begin{remark}
For $\lambda=0$, the scheme (\ref{3.14}) reduces to the high order scheme for the backward fractional Feynman-Kac equation with L\'{e}vy flight \cite{Carmi:10} and nonhomogeneous boundary and/or initial conditions. For $\lambda=0$ and $\rho=0$,  the scheme (\ref{3.14}) becomes the high order scheme for the space-time Caputo-Riesz fractional diffusion equation \cite{Chen:12} with nonhomogeneous boundary and/or initial conditions.
\end{remark}
\begin{remark}
The scheme (\ref{3.14}) makes a breakthrough in the requirement of using the nodes within the bounded interval (Remark 3 of \cite{Tian:12}), i.e., if the nodes are beyond the bounded interval, just simply let the values of the function at nodes be zero; the high convergence orders still remain.
\end{remark}
\begin{remark}
Using the iterative methods to solve the algebraic equations corresponding to the scheme (\ref{3.14}), the cost of matrix-vector multiplication still keeps as $\mathcal{O} (M \log M)$ since the Toeplitz structure of the matrix still remains except several fixed boundary columns.
\end{remark}

%%%%%%%%%%%%%%%%%%%%%%%%%%  Example 5.3  %%%%%%%%%%%%%%%%%%%%%
\subsection{High numerical convergence orders and physical simulations}
Without loss of generality, we add a force term $f(x,\rho,t)$ on the right side of (\ref{1.1}).
The following numerical results show that the scheme (\ref{3.14}) has high convergence orders for (\ref{1.1}) with nonhomogeneous boundary and/or initial conditions.

\begin{example}\label{example2}\end{example}
Consider  (\ref{1.1}) on a finite domain  $x \in (0,1)$ and $0<t \leq 1/2$  with the coefficient  $K=1$ and $U(x)=x$, $\rho=1$.
%\begin{equation*}
%\begin{split}
%f(x,\rho,t)=&e^{J\rho x t}\left[ \frac{\Gamma(4+\gamma)}{\Gamma(4)}t^3  +\frac{\Gamma(3)}{\Gamma(3-\gamma)}t^{2-\gamma}  +\frac{\Gamma(2)}{\Gamma(2-\gamma)}t^{1-\gamma}  \right]\sin(x^2)\sin((1-x)^2)  \\
%       &  +\frac{1}{2\cos(\alpha \pi/2)} (t^{3+\gamma}+t^2+t+1)e^{-\lambda x}{ _{a}}D_x^{\alpha}[e^{(\lambda+J\rho t) x}\sin(x^2)\sin((1-x)^2)]\\
%       &  +\frac{1}{2\cos(\alpha \pi/2)} (t^{3+\gamma}+t^2+t+1)e^{\lambda x}{ _{x}}D_b^{\alpha}[e^{(-\lambda+J\rho t) x}\sin(x^2)\sin((1-x)^2)],
%\end{split}
%\end{equation*}
%where $J=\sqrt{-1}$.
Let the nonhomogeneous initial condition $G(x,\rho,0)=\sin(x^2)\sin((1-x)^2) $ and the boundary
conditions $G(0,\rho,t)=G(1,\rho,t)=0$. Taking the exact solution as $$G(x,\rho,t)=e^{J\rho xt}(t^{3+\gamma}+t^3+t^2+t+1)\sin(x^2)\sin((1-x)^2), ~~J=\sqrt{-1}, $$
it is easy to analytically get the forcing function $f(x,\rho,t)$.

\begin{table}[h]\fontsize{9.5pt}{12pt}\selectfont%生成浮动表格
  \begin{center}%\def\tabcolsep{28.5pt}%表格居中
  \caption{The maximum errors and convergence orders for (\ref{3.5}) with $\nu=4$ at $t=1/2$,  and
$\lambda=0.2$,  $U(x)=x$, $\rho=1$,   $r_3= 0$,  $m_1=2$, $h=\tau^2.$ }\vspace{5pt}%标题，离表格一定的距离
 { \begin{tabular*}{\linewidth}{@{\extracolsep{\fill}}*{9}{c}}                                    \hline  %画顶端的横线
$\tau$ &  $\alpha=1.1,\gamma=0.9$  & Rate & $\alpha=1.5,\gamma=0.5$     & Rate  & $\alpha=1.9,\gamma=0.1 $&   Rate      \\\hline
    $1/10$   &    3.2798e-005  &           & 2.8455e-005     &       &  3.9471e-005   &          \\\hline %画底端的横线
    $1/20$   &   2.3499e-006 & 3.80   &  2.0029e-006   & 3.83       &  2.3969e-006   &  4.04    \\\hline %画底端的横线
    $1/40$   &   1.6838e-007  &3.80   &  1.4330e-007    & 3.81       &   1.4513e-007   &  4.05 \\\hline %画底端的横线
    $1/80$   &   1.1480e-008  &  3.87   &  9.8246e-009   & 3.87       &  8.7080e-009  & 4.06 \\\hline %画底端的横线
    \end{tabular*}}\label{table:3}%\vspace{-15pt}
  \end{center}
\end{table}

Table \ref{table:3}  shows that the scheme (\ref{3.5}) used to solve Example \ref{example2} (with nonhomogeneous initial condition) preserves the desired convergence order $\mathcal{O} (\tau^\nu+h^2)$, $\nu=4$.

\begin{example}\label{example3}\end{example}
Consider (\ref{1.1}) on a finite domain  $x \in (0,1)$, $0<t \leq 1/2$  with the coefficient $K=1$, $U(x)=x$, and $\rho=1$.
Let the nonhomogeneous initial condition $G(x,\rho,0)=x^4-2x^3-x^2+2x+1 $ and the nonhomogeneous boundary
conditions $G(0,\rho,t)=t^{3+\gamma}+t^3+t^2+t+1$ and $G(1,\rho,t)=e^{J\rho t}(t^{3+\gamma}+t^3+t^2+t+1)$. Taking the exact solution as
$$G(x,\rho,t)=e^{J\rho xt}(t^{3+\gamma}+t^3+t^2+t+1)(x^4-2x^3-x^2+2x+1), ~~J=\sqrt{-1}, $$
we get the forcing function
\begin{equation*}
\begin{split}
&f(x,\rho,t)\\
&=e^{J\rho x t}\Big[\frac{\Gamma(4+\gamma)}{\Gamma(4)}t^3 +\frac{\Gamma(4)}{\Gamma(4-\gamma)}t^{3-\gamma}
+\frac{\Gamma(3)}{\Gamma(3-\gamma)}t^{2-\gamma} \\
&~~+\frac{\Gamma(2)}{\Gamma(2-\gamma)}t^{1-\gamma}
+\frac{\Gamma(1)}{\Gamma(1-\gamma)}t^{-\gamma}   \Big](x^4-2x^3-x^2+2x+1)\\
       & ~~ -\frac{\lambda^\alpha}{\cos(\alpha \pi/2)}e^{J\rho xt}(t^{3+\gamma}+t^3+t^2+t+1)(x^4-2x^3-x^2+2x+1)\\
       &~~  +\frac{(t^{3+\gamma}+t^3+t^2+t+1)}{2\cos(\alpha \pi/2)} e^{-\lambda x}{ _{a}}D_x^{\alpha}[e^{(\lambda+J\rho t) x}(x^4-2x^3-x^2+2x+1)]\\
       &~~  +\frac{(t^{3+\gamma}+t^3+t^2+t+1)}{2\cos(\alpha \pi/2)} e^{\lambda x} \\
       &~~~~\times{ _{x}}D_b^{\alpha}[e^{(-\lambda+J\rho t) x}
       ((1-x)^4-2(1-x)^3-(1-x)^2+2(1-x)+1)]
\end{split}
\end{equation*}
To calculate the last two terms of the above equation,
the following fractional formulas \cite{Miller:93} are used:
%\begin{equation*}
%\begin{split}
%&_{0}D_x^{\alpha}[x^m ]=\frac{\Gamma(m+1)}{\Gamma(m+1-\alpha)}x^{m-\alpha},~~~~_{x}D_b^{\alpha}[(b-x)^m ]=\frac{\Gamma(m+1)}{\Gamma(m+1-\alpha)}(b-x)^{m-\alpha};
% \end{split}
%\end{equation*}
%and
\begin{equation*}
\begin{split}
_{a}D_x^{\alpha}[(x-a)^m e^{\lambda (x-a)}]&=\sum_{n=0}^\infty \frac{\lambda^n}{n!}\frac{\Gamma(n+m+1)}{\Gamma(n+m+1-\alpha)}(x-a)^{n+m-\alpha}
\\
&\simeq \sum_{n=0}^{50} b_{m,n}(x-a)^{n+m-\alpha},
 \end{split}
\end{equation*}
where $b_{m,0}=\frac{\Gamma(m+1)}{\Gamma(m+1-\alpha)}$, $b_{m,k}=\frac{\lambda(m+n)}{n(m+n-\alpha)}b_{m,k-1}, ~k \geq 1;$ and
\begin{equation*}
\begin{split}
_{x}D_b^{\alpha}[(b-x)^m e^{\lambda x}]
&=e^{\lambda b}\sum_{n=0}^\infty (-1)^n\frac{\lambda^n}{n!}\frac{\Gamma(n+m+1)}{\Gamma(n+m+1-\alpha)}(b-x)^{n+m-\alpha}\\
&\simeq (-1)^n e^{\lambda b}\sum_{n=0}^{50} b_{m,n}x^{n+m-\alpha}.
 \end{split}
\end{equation*}
\begin{table}[h]\fontsize{9.5pt}{12pt}\selectfont%生成浮动表格
  \begin{center}%\def\tabcolsep{28.5pt}%表格居中
  \caption{The maximum errors and convergence orders for (\ref{3.14}) with $\nu=2$ at $t=1/2$,  and
  $U(x)=x$, $\rho=1$,   $r_3= 0$, $m_1=2$, $h=\tau.$ }\vspace{5pt}% 标题，离表格一定的距离
 { \begin{tabular*}{\linewidth}{@{\extracolsep{\fill}}*{9}{c}}                                    \hline  %画顶端的横线
       &  $\lambda=0.2$  &    & $\lambda=1 $&  & $\lambda=5 $&       \\\hline
      $\tau$ &  $\alpha=1.3,\gamma=0.8$  & Rate & $\alpha=1.5,\gamma=0.5$  & Rate   & $\alpha=1.9,\gamma=0.2 $&   Rate      \\\hline
    $1/20$   &    3.8162e-003            &      &    3.2514e-003                      &        &  4.1398e-002   &          \\\hline %画底端的横线
    $1/40$   &    1.0620e-003            &1.85  &    8.4401e-004                      &  1.95      &  1.0920e-002   &  1.92    \\\hline %画底端的横线
    $1/80$   &   2.8439e-004             &1.90  &    2.1568e-004                      &   1.97     &   2.8053e-003   &  1.96 \\\hline %画底端的横线
    $1/160$  &   7.6409e-005             & 1.90 &    5.4627e-005                      &   1.98     & 7.1366e-004 & 1.97  \\\hline %画底端的横线
    \end{tabular*}}\label{table:4}%\vspace{-15pt}
  \end{center}
\end{table}

Table \ref{table:4} shows that the scheme (\ref{3.14}) preserves the desired convergence order $\mathcal{O} (\tau^2+h^2)$ for Example \ref{example3} with nonhomogeneous boundary and initial conditions.

%%%%%%%%%%%%%%%%%%%%%%%%%%%%%%%%%%%%%%%%%%%%%
%%%%%%%%%%%%%%%%%%%%%%%%%%%%%%%%%%%%%%%%%%%%%%%

{\bf{Simulations with Dirac delta function as initial condition}}
Let the joint PDF $G(x,A,t)$ be the inverse Fourier transform $\rho\rightarrow A$ of $G(x,\rho,t)$ \cite{Carmi:10}.

Simulate (\ref{3.14}) on a finite domain  $0< x < 1 $,  $0<t \leq 1$,  with the coefficient   $K=1$,  $\nu=2$, $\lambda=0.1$,  $r_3=0$, $m_1=0$, $\tau=h=1/100$, and $\alpha=1.5$, $\gamma=0.5$,
\begin{equation*}
U(x)=\left\{ \begin{array}
{l@{\quad} l}1,~~~~x \in (0.25,0.75),\\
  0,~~~~{\rm otherwise}.
\end{array}
\right.
\end{equation*}

Let the initial condition $G(x,\rho,0)=\delta_a(x-0.5)$ (Dirac delta function) and the boundary conditions $G(0,\rho,t)=G(1,\rho,t)=0$,
where $\rho=\{k\}_{k=-39}^{40}$. The Dirac delta function is approximated by the sequence of Gaussian functions
\begin{equation*}
  \delta_a(x)=\frac{1}{2\sqrt{a\pi}}e^{-\frac{x^2}{4a}}~~~~{\rm as}~~~~a\rightarrow 0.
\end{equation*}
Here we take $a=0.001$ as the approximation in numerical computations.

The corresponding  procedure of generating Figures \ref{FIG.1}-\ref{FIG.4} is executed as follows:
\begin{description}
\item[(1)] Using the scheme (\ref{3.14}) and the above conditions, we obtain $G(x,\rho,t)$.
\item[(2)] From the {\bf{Inverse discrete Fourier transform (IDFT) method}} \cite{{Smith:99}}, we get $G(x,A,t)$.
\end{description}

%%%%%%%% Figure  %%%%%%%%%%%%%%%%%%%%%%%%%%%%%%%%%%%%%%%%%%   Figure      %%%%%%%%%%%%%%%%%%%%%%%%%   Figure      %%%%%%%%%%%%%%%%%%%%%%%%%%%%%%%%%
%%%%%%%%%%%%%%%%%%%%%%%%%%%%%%%%%%%%%%%%%%%%%%%%%%%
\begin{figure}[t]
%%%%%%%%%%
    \begin{minipage}[t]{0.50\linewidth}
    \includegraphics[scale=0.45]{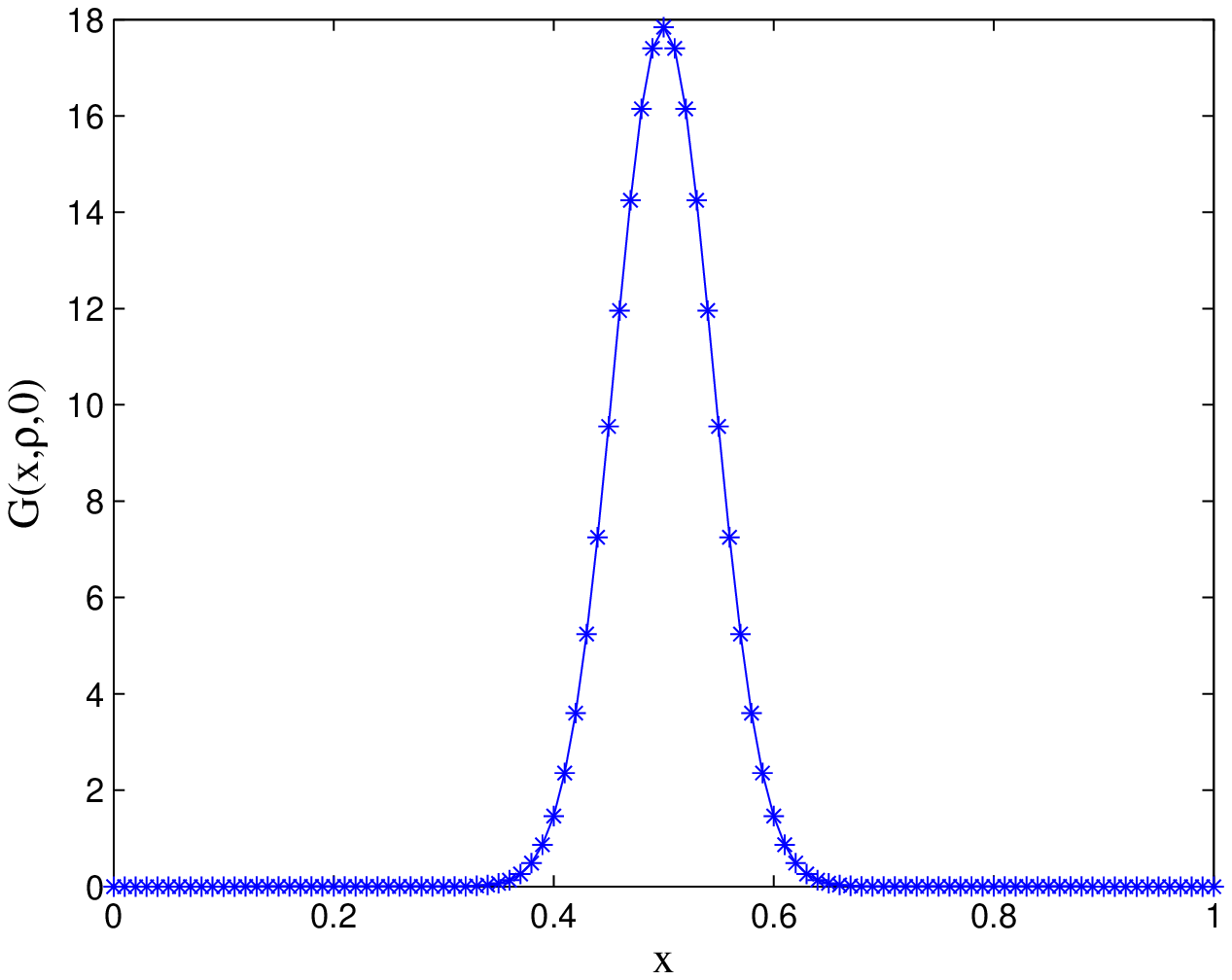}
    \caption{Initial value.}  \label{FIG.1}
    \end{minipage}
%%%%%%%%%%
    \begin{minipage}[t]{0.50\linewidth}
    \includegraphics[scale=0.45]{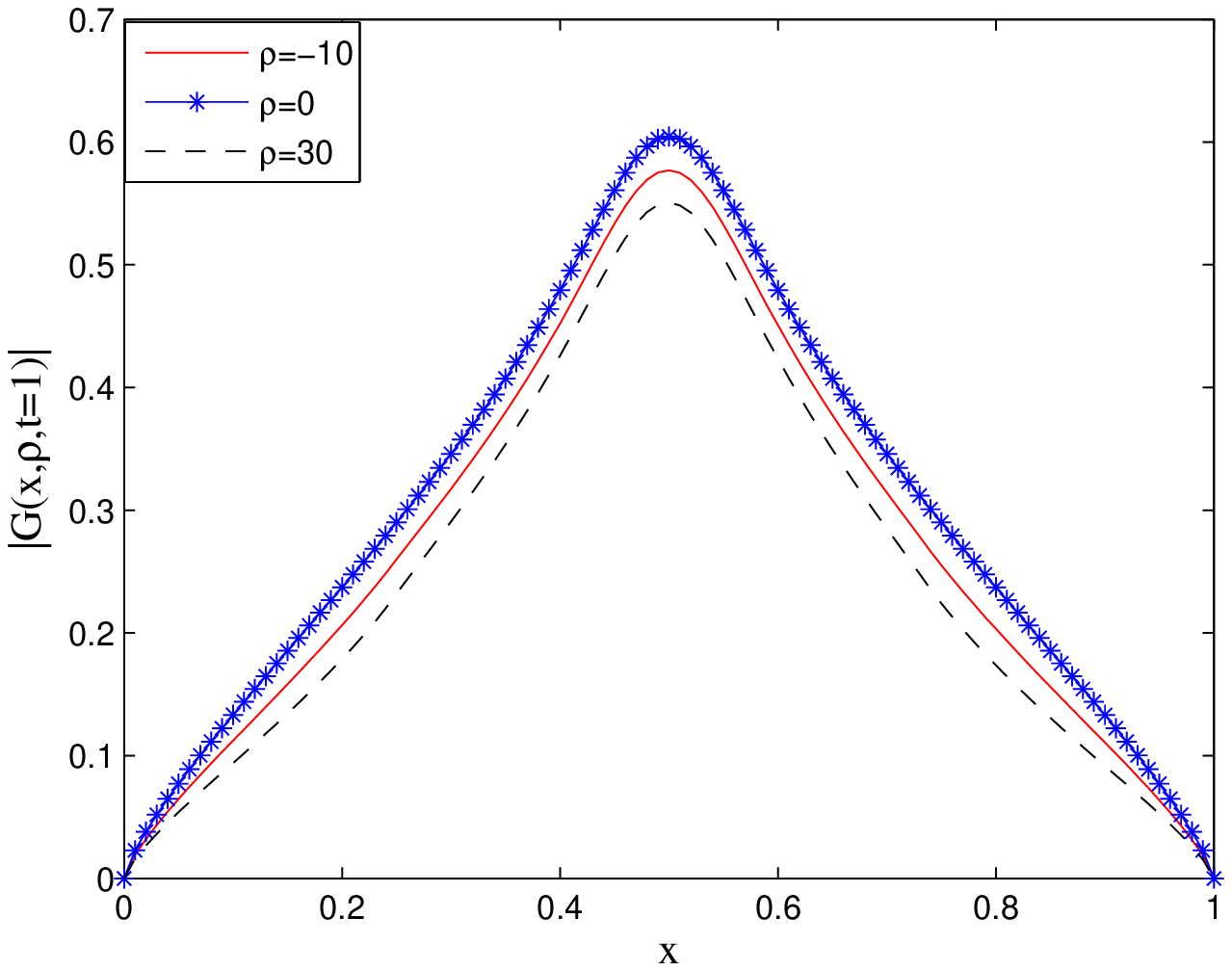}
    \caption{Amplitude $G(x,\rho,t=1)$.}  \label{FIG.2}
    \end{minipage}
%%%%%%%%%%
    \begin{minipage}[t]{0.50\linewidth}
    \includegraphics[scale=0.45]{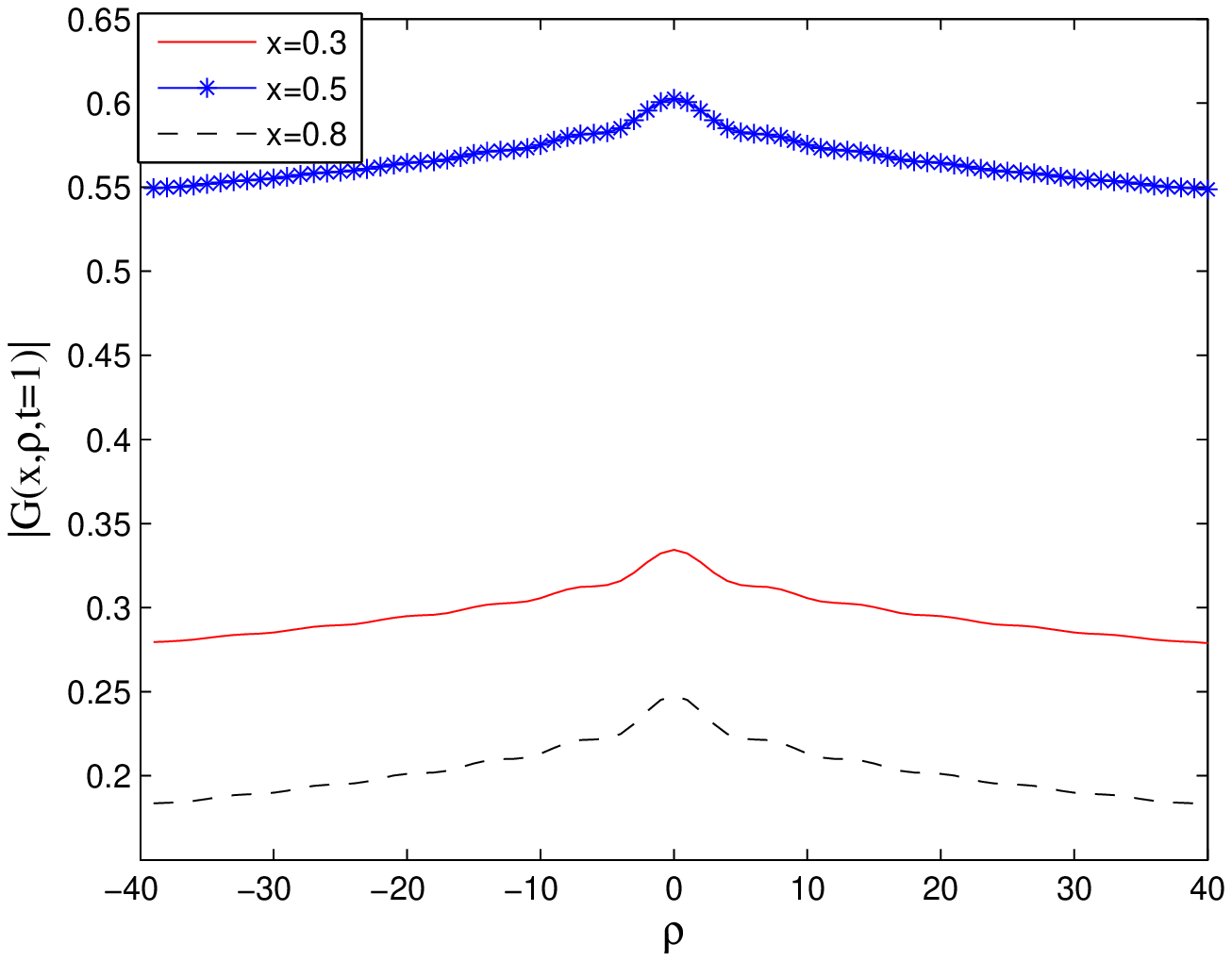}
    \caption{Amplitude $G(x,\rho,t=1)$.}  \label{FIG.3}
    \end{minipage}
%%%%%%%%%%
    \begin{minipage}[t]{0.50\linewidth}
    \includegraphics[scale=0.45]{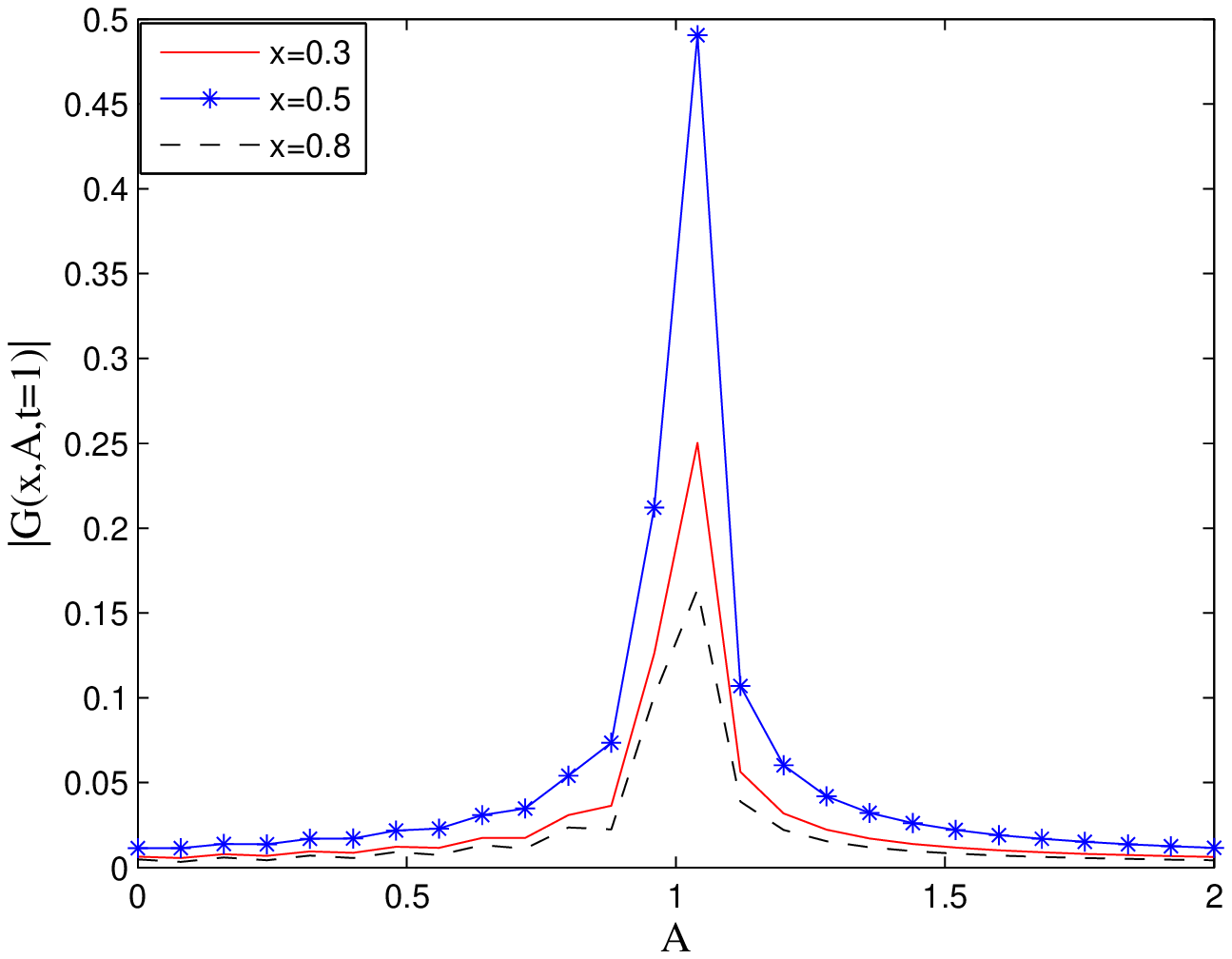}
    \caption{IDFT of $G(x,\rho,t=1)$.}  \label{FIG.4}
    \end{minipage}

\end{figure}

%%%%%%%%%% Figure  %%%%%%%%%%%%%%%%%%%%%%%%%%%%%%%%%%%%%%%%%%   Figure      %%%%%%%%%%%%%%%%%%%%%%%%%   Figure      %%%%%%%%%%%%%%%%%%%%%%%%%%%%%%%%%
%%%%%%%% Figure  %%%%%%%%%%%%%%%%%%%%%%%%%%%%%%%%%%%%%%%%%%   Figure      %%%%%%%%%%%%%%%%%%%%%%%%%   Figure      %%%%%%%%%%%%%%%%%%%%%%%%%%%%%%%%%
Figures \ref{FIG.1}-\ref{FIG.4} show the simulation results with Dirac delta function as initial condition, and we find the joint probability density function $G(x,A,t)$.

\section{Conclusion}
In Fourier space, fractional substantial diffusion equation with truncated L\'{e}vy flights describes the distribution of the functionals of the paths generated by the particles with the power law waiting time distribution $t^{-(1+\gamma)}$ with $0<\gamma<1$ and the jump length distribution $e^{-\lambda |x|}|x|^{-1-\alpha}$ with $\lambda>0$. For the equation, this paper first provides its numerical scheme with first order accuracy in time and second order in space and the detailed numerical stability and convergence analysis are performed in complex space;  then we further propose the numerical schemes with high accuracy in both time and space for the equation with nonhomogeneous boundary and/or initial conditions. The high order schemes for the fractional problems with nonhomogeneous boundary and/or initial conditions make a breakthrough in two aspects:  1. breaking the requirements that the analytical solution and even its several derivatives must be zero at the boundaries and/or initial time for keeping high accuracy; 2. breaking the limitation of using the nodes within the bounded interval (just simply putting the values of the function at nodes beyond the interval be zero). The extensive numerical experiments are made by multigrid methods, which numerically verify the high convergence orders and simulate the physical system by numerically making inverse Fourier transform to the solutions of the equation with different $\rho$. Taking the parameters $\rho=0$ and $\lambda=0$, all the schemes discussed in this paper become to the schemes for the space-time Caputo-Riesz fractional diffusion equation with homogeneous/nonhomogeneous boundary and homogeneous/nonhomogeneous initial conditions, which are still effective and keep the high accuracy.

\section*{Acknowledgments}The authors thank Yantao Wang for the discussions.

\section*{Appendixes}
We provide the pseudo codes used in this paper.
%\label{sec:V-cycle}
%For a general linear system
%\begin{equation*}
%A_hu_h=b_h,
%\end{equation*}
%the following V-cycle MGM  can be used to solve.
\begin{algorithm}
\caption{MGM ~~~~$u_h$=V-cycle$(A_h,u_0,b_h)$ }
\label{V-cycle}
\begin{algorithmic}
\STATE Numerically solving the general linear system $A_hu_h=b_h$ by V-cycle MGM
\end{algorithmic}
\begin{algorithmic}[1]
\STATE Pre-smooth:\quad $u_h:=\mathtt{smooth}^{\nu_1}(A_h,u_0,b_h)$
\STATE Get residual:\quad $r_h=b_h-A_hu_h$
\STATE Coarsen:\quad $r_H=I_h^Hr_h$
\IF{$H==h_0$}
\STATE Solve:\quad $A_H\xi_H=r_H$
\ELSE
\STATE Recursion:\quad $\xi_H=\mbox{V-cycle}(A_H,0,r_H)$
\ENDIF
\STATE Correct:\quad $u_h:=u_h+I_H^h\xi_H$
\STATE Post-smooth:\quad $u_h:=\mathtt{smooth}^{\nu_2}(A_h,u_h,b_h)$
\STATE Return $u_h$
\end{algorithmic}
\end{algorithm}

%\begin{algorithm}
%\caption{Stopping criterion for MGM}
%\label{Stopping criterion}
%\begin{algorithmic}[1]
%\STATE $u_h:=u_0$
%\STATE $r_0:=||A_hu_0-b_h||_2$
%\STATE $r_h:=r_0$
%\WHILE{$\frac{r_h}{r_0}>\epsilon$}
%\STATE $u_h:=\mbox{V-cycle}(A_h,u_h,b_H)$
%\STATE $r_h:=||A_hu_h-b_h||_2$
%\ENDWHILE
%\STATE Return $u_h$
%\end{algorithmic}
%\end{algorithm}

\renewcommand{\algorithmicrequire}{ \textbf{Input:}} %Use Input in the format of Algorithm
\renewcommand{\algorithmicensure}{ \textbf{Output:}} %UseOutput in the format of Algorithm

\begin{algorithm}
\caption{Calculating the Left and Right Fractional Derivatives}
\label{AFD}
\begin{algorithmic}[1]

\REQUIRE ~~\\ %算法的输入参数：Input
Original function $G(x) \in C^2(a,b)\cap C_0^1(a,b)$ and $\alpha \in (1,2)$\\
%Fractional derivative order $\alpha \in (1,2)$\\
\ENSURE ~~\\
Denote the values of numerically calculating $_aD_x^\alpha G(x)$ and $_xD_b^\alpha G(x)$ by $v_l$ and $v_r$\\
The algorithm JacobiGL of generating the nodes and weights of Gauss-Labatto integral with the weighting function $(1-x)^{1-\alpha}$ or $(1+x)^{1-\alpha}$  can be seen in\cite{Funaro:93,Hesthaven:07}\\
\STATE $z,w:=$JacobiGL$(1-\alpha,0,20)$
\STATE $v_l:=\frac{1}{\Gamma(2-\alpha)}\left(\frac{x-a}{2}\right)^{2-\alpha}
\sum\limits_{i=1}^{20} \frac{\partial^2 G}{\partial x^2}\left(\frac{x-a}{2}z_i+\frac{x+a}{2}\right)w_i$
\STATE $z,w:=$JacobiGL$(0,1-\alpha,20)$
\STATE $v_r:=\frac{1}{\Gamma(2-\alpha)}\left(\frac{b-x}{2}\right)^{2-\alpha}
\sum\limits_{i=1}^{20} \frac{\partial^2 G}{\partial x^2}\left(\frac{b-x}{2}z_i+\frac{b+x}{2}\right)w_i$
\end{algorithmic}
\end{algorithm}

\end{document}